\documentclass{article}
\usepackage[utf8]{inputenc}

\usepackage{float}
\usepackage{graphicx}
\usepackage{amsmath}
\usepackage{enumitem}
\usepackage{amscd}
\usepackage{amsthm}
\usepackage{mathtools}
\usepackage{amssymb}
\usepackage{tikz-cd}
\usepackage{caption}
\usepackage{dsfont}
\usepackage[font=small,labelfont=bf,width=1\textwidth]{caption}
\usepackage{multicol}
\usepackage{subfig}
\usepackage[left=3.5cm,top=4cm,right=3.5cm,bottom=4cm]{geometry}

\usepackage{siunitx}
\usepackage{lipsum}
\usepackage{xfrac}
\usepackage{marvosym}
\usepackage{epic}
\usepackage{wasysym}
\usepackage{epstopdf}
 
\usepackage{hyperref}
\usepackage{wrapfig}
\usepackage{enumitem}
\usepackage{enumerate}
\usepackage{color}
\usepackage[UKenglish]{datetime}

\theoremstyle{plain}
\newtheorem{theorem}{Theorem}[section]
\newtheorem{lemma}[theorem]{Lemma}
\newtheorem{corollary}[theorem]{Corollary}
\newtheorem{proposition}[theorem]{Proposition}

\newtheorem{remark}[theorem]{Remark}
\newtheorem{conjecture}[theorem]{Conjecture}

\theoremstyle{definition}
\newtheorem{definition}[theorem]{Definition}
\newtheorem{example}[theorem]{Example}

\mathtoolsset{showonlyrefs}

\title{Jacobi polynomials on the simplex of Hermitian matrices and zonal spherical functions on partial flag manifolds}
\author{Teije Kuijper\footnote{t.kuijper@math.au.dk}}
\date{\today}

\begin{document}

\maketitle

\begin{abstract}
    We introduce the radial part of the Laplace--Beltrami operator on partial flag manifolds and study its eigenfunctions, the elementary zonal spherical functions.
    These eigenfunctions can be related to orthogonal polynomials on the simplex of Hermitian matrix invariant under simultaneous conjugation by unitary matrices with respect to the complex matrix variate Dirichlet distribution.
    Using a matrix variate version of Koornwinders method for constructing orthogonal polynomials in multiple variables from orthogonal polynomials in one variable, we construct a family of pairwise orthogonal polynomials in terms of Hermitian Jacobi polynomials and conjecture that these are elementary zonal spherical functions.
    Furthermore, we recall the definition of multivariate Schur polynomials and show how they can be used to obtain information about elementary zonal spherical functions.
    In particular, we give some partial results in the direction that the radial part of the Laplace--Beltrami operator on partial flag manifolds is strongly triangular with respect to the multivariate Schur polynomials.
    The elementary zonal spherical functions can then be constructed by applying the Gram--Schmidt orthogonalisation process to the multivariate Schur functions.
\end{abstract}

\tableofcontents

\section*{Introduction}
The theory of zonal spherical functions on symmetric spaces is rich and has proven to be fruitful.
Despite this fact, similar theories for classes of non-symmetric homogeneous spaces seem few and far between.
The main aim of this paper is to suggest and study a similar notion of \emph{zonal spherical functions} for partial flag manifolds.
Since the groups acting on the left and the right are different, they might more accurately be called \emph{intertwining functions}.

Flag manifolds are a special family of homogeneous spaces that arise in a myriad of different contexts, among others, representation theory \cite{Wolf1998}, stochastic analysis \cite{Baudoin2024-is, kuijper2025}, complex geometry \cite{Borel1954, CORREA2019109}, Einstein manifolds \cite[Chapter 8.]{Besse2007-en} and Poisson geometry \cite[Example 4.22]{Crainic2021-tz}.
Generalised flag manifolds have been classified \cite[\textsection 7.4.]{Arvanitoyeorgos2003-tp}, in which the partial flag manifolds form the first infinite family, which are of $A_n$-type.
While also being of huge importance in themselves, the partial flag manifolds are often used as a testing ground for a more abstract theory which works for all generalised flag manifolds simultaneously.
This is also a strong motivation behind the current paper.

Radial coordinates on flag manifolds are essential in the study of Brownian motions on flag manifolds \cite{baudoin2025fullflag, kuijper2025}.
By general theory, the heat kernel of the radial part of the Brownian motion can be expanded in terms of eigenfunctions of the radial part of the Laplace--Beltrami operator \cite[Theorem 4.52.]{Baudoin2014-ca}.
The heat kernel of the radial part of a Brownian motion on flag manifolds and related processes also appear in the expression for the joint characteristic function of the area processes on the partial flag manifolds with blocks of equal size \cite[Theorem 4.8.]{kuijper2025}.
The original motivation behind this paper was to understand these heat kernels better.

As a homogeneous space, the partial flag manifold $F_{d_1,\dots ,d_k}(\mathbb{C}^n)$ of signature $(d_1,\dots ,d_k)$ is given by
\begin{equation*}
    F_{d_1,\dots ,d_k}(\mathbb{C}^n) =\mathbf{U}(n)/(\mathbf{U}(n_1)\times\dots\times\mathbf{U}(n_{k+1})) ,
\end{equation*}
where $n_j:=d_j-d_{j-1}$, $d_0:=0$ and $d_{k+1}:=n$.
The radial part of $F_{d_1,\dots ,d_k}(\mathbb{C}^n)$ is defined to be the double quotient space
\begin{equation*}
    \mathbf{U}(n-m)\times\mathbf{U}(m)\backslash\mathbf{U}(n)/\mathbf{U}(n_1)\times\dots\times\mathbf{U}(n_{k+1}) .
\end{equation*}
for some $m\leq\min (n_1,\dots ,n_{k+1})$.
Coordinates on the open dense subset
\begin{equation*}
    \mathcal{D}_m:=\left\{\begin{pmatrix}
        W_1 & \dots & W_{k+1} \\ Z_1 &\dots & Z_{k+1}
    \end{pmatrix}\in\mathbf{U}(n)\big| Z_{j}\in\mathbb{C}^{m\times n_j}, \det (Z_jZ_j^*)\neq 0, 1\leq j\leq k+1\right\}
\end{equation*}
of the double quotient space are given by
\begin{equation*}
    (Z_1Z_1^*,\dots ,Z_{k}Z_k^*)
\end{equation*}
up to simultaneous conjugation with $m\times m$ unitary matrices.
These coordinates reside in the simplex of Hermitian matrices
\begin{equation*}
    \Sigma^m_k :=\left\{ (\Lambda_1,\dots ,\Lambda_{k})\in\mathrm{Her}(m)^k\biggm| \sum_{j=1}^{k}\Lambda_j\leq I_m\textrm{ and }\Lambda_j\geq 0 \,\textrm{ for }1\leq j\leq k\right\}
\end{equation*}
(quotiented by the diagonal action of $\mathbf{U}(m)$).

The radial part of the Laplace--Beltrami operator $\mathcal{G}^m_{(n_1,\dots ,n_{k+1})}$ on $F_{d_1,\dots ,d_k}(\mathbb{C}^n)$ can be determined using stochastic calculus.
We will consider the operator $\mathcal{G}^m_{(n_1,\dots ,n_{k+1})}$ more generally for real $n_j>m/2-1$, which we will call the Hermitian Jacobi operators on the simplex.
It is symmetric with respect to the complex matrix variate Dirichlet distribution.
To get some idea of the eigenfunctions of $\mathcal{G}^m_{(n_1,\dots ,n_{k+1})}$, we study orthogonal polynomials with respect to the complex matrix variate Dirichlet distributions invariant under simultaneous conjugation by unitary matrices.
We are able to construct a family of pairwise invariant orthogonal polynomials \eqref{eq:matrix-Jacobi-polynomials-simplex}, the product Hermitian Jacobi polynomials on the simplex, using a matrix variate extension of Koornwinders method, proposition \ref{prop:naive-Koornwinders-method}, for constructing orthogonal polynomials in multiple variables from orthogonal polynomials in one variable, which we conjecture to be eigenfunctions of $\mathcal{G}^m_{(\kappa_1,\dots ,\kappa_{k+1})}$.
Considerable effort is put into showing that the corresponding expressions are in fact polynomials invariant under simultaneous conjugation by unitary matrices.
They reduce to the Jacobi polynomials on the simplex when $m=1$, and to the Hermitian Jacobi polynomials when $k=1$, which are known to be eigenfunctions of $\mathcal{G}^1_{(n_1,\dots ,n_{k+1})}$ and $\mathcal{G}^m_{(n_1,n_2)}$ respectively.

In general, the product Hermitian Jacobi polynomials on the simplex will not form a basis of the space of invariant polynomials.
To get an idea of possible other eigenfunctions we suggest a theory for partial flag manifolds in the style of the hypergeometric function theory of Heckman \&\ Opdam \cite{HeckmanOpdam1987, HECKMAN1995}.
The main objects of relevance are the multivariate Schur polynomials from the statistics literature \cite{JoseA2011}, where they are called complex invariant polynomials.
These polynomials are recalled in definition \ref{def:multivariate-Schur-polynomials} and form a basis of the space of invariant polynomials.
The multivariate Schur polynomials are indexed by $k+1$ partitions, but are not uniquely defined in general.
We conjecture that the Jacobi operator on the simplex is (strongly) triangular with respect to the multivariate Schur polynomials and give some partial results in this direction.
The Hermitian Jacobi polynomials on the simplex are then defined in definition \ref{def:Hermitian-Jacobi-polynomials-simplex} by applying the Gram--Schmidt process to multivariate Schur polynomials with respect to a lexicographical extension of the natural partial order on partitions.
We suggest a possible form \eqref{eq:general-Hermitian-Jacobi-polynomials-simplex} of these polynomials when $k=2$, curiously (naive) extensions of this expression to $k>2$ do not give polynomials.
Effort is taken throughout the text to show that the theory and the conjectures reduce to known results when $k=1$ or $m=1$.

Some other novel results include a version of the Jacobi--Trudy identity for Hermitian Jacobi polynomials, an explicit expression of these polynomials in terms of the coefficients of a Hermitian matrix, and a notion of degree for polynomials of Hermitian matrix arguments invariant under simultaneous conjugation.

The paper is organised as follows: section \ref{chap:symmetric-polynomials} recalls the theory of symmetric polynomials and the Hermitian Jacobi polynomials formulated in terms of polynomials of a Hermitian matrix argument invariant under conjugation by unitary matrices; section \ref{chap:invariant-polynomials} extends this theory to polynomials of multiple Hermitian matrix arguments, in particular a matrix variate version of Koornwinders method is given, and multivariate Hermitian Jacobi polynomials are introduced; and lastly section \ref{chap:Jacobi-polynomials} introduced radial coordinates on partial flag manifolds and the Hermitian Jacobi polynomials on the simplex and shows how they are related to each other, furthermore an explicit family of pairwise orthogonal polynomials with respect to the complex matrix variate Dirichlet measure is given. 

I would like to thank Nizar Demni, Jan Frahm, Gert Heckman and Max van Horssen for the many invaluable discussions.

\section{Symmetric polynomials}\label{chap:symmetric-polynomials}
In this section we discuss the basics of the theory of symmetric functions.
Special care is taken to explain the equivalence between symmetric polynomials and polynomials of a single Hermitian matrix argument invariant under conjugation by unitary matrices, which is well-known in the field of symmetric functions.
We provide an overview for the bases of the symmetric polynomials relevant to the rest of this paper.
In particular, we emphasise an equivalence between the symmetric monomial polynomials and the Schur polynomials.
Furthermore, we provide a brief overview of Hermitian Jacobi polynomials, which are certain Heckman--Opdam polynomials of type $BC$.

The section on Hermitian Jacobi polynomials, contains a version of the Jacobi--Trudy identity and an expression of these polynomials in terms of the matrix coefficients.
Even though these expression follow relatively easy from the proof of similar expressions for the Schur functions, they seem to be new.

For the rest of this section we will fix an integer $m\geq 1$.
A polynomial $P:\mathbb{R}^m\rightarrow\mathbb{R}$ is called symmetric if
\begin{equation*}
    P(\lambda_{\sigma (1)},\dots ,\lambda_{\sigma (m)}) =P(\lambda_1 ,\dots ,\lambda_m)
\end{equation*}
for all $\sigma\in S_m$, where $S_m$ is the group of permutations of $\{ 1,\dots ,m\}$.
The space of symmetric polynomials will be denoted by $P(\mathbb{R}^m)^{S_m}$ and the space of homogeneous symmetric polynomials of total degree $r$ by $P_r(\mathbb{R}^m)^{S_m}$.

\subsection{Integer partitions}
Central to the theory of symmetric polynomials is the concept of a partition.

\begin{definition}
A \emph{partition} $\tau$ (of \emph{length} $m$) is a multi-index $\tau =(\tau_1 ,\dots ,\tau_k)\in\mathbb{N}^m$ such that $\tau_1\geq\dots\geq\tau_m\geq 0$.
The set of all partitions (of length $m$) will be denoted by $\mathcal{P}$.
The \emph{weight} of a parition is the number $|\tau| :=|\tau_1|+\dots +|\tau_k|$.
We will write $\tau\vdash r$ if $\tau$ has weight $r$.
\end{definition}

The set of partitions $\mathcal{P}$ can be totally ordered by the \emph{graded reverse lexicographic ordering}, which is given by
\begin{equation*}
    \tau <_{\mathrm{rlg}}\rho\Leftrightarrow |\tau| <|\rho|\textrm{ or }\tau_{\ell}=\rho_{\ell}\textrm{ for all }1\leq\ell <j\textrm{ and }\tau_{j}<\rho_{j}\textrm{ for some } 1\leq j\leq m
\end{equation*}
for two partitions $\tau$ and $\rho$.
It turns out however that a total order is too rigid for many applications.
The partitions can also be partially ordered by the \emph{graded natural} or \emph{graded dominating ordering} which is given by
\begin{equation*}
    \tau\leq\rho\Leftrightarrow |\tau|< |\rho|\textrm{ or }\sum_{j=1}^{\ell} \tau_j\leq \sum_{j=1}^{\ell}\rho_j\textrm{ for all } 1\leq\ell\leq m,
\end{equation*}
for two partition $\tau$ and $\rho$.

We have the implication $\tau\leq\rho\Rightarrow \tau\leq_{\mathrm{rlg}}\rho$.
In particular, the reverse lexicographical ordering is a totally ordered extension of the natural ordering.
From now on we will always assume that the partitions are partially ordered by the natural ordering unless stated otherwise.

We will write $\rho\subseteq\tau$ if $\rho_j\leq\tau_j$ for all $1\leq j\leq m$.
We will also occasionally use the following terminology.

\begin{definition}
    A \emph{semistandard} or \emph{column strict tableau} is a sequence of partitions
    \begin{equation*}
        \rho :=\lambda^{(0)}\subseteq\dots\subseteq\lambda^{(r)} =:\tau
    \end{equation*}
    such that $|\tilde{\lambda}^{(j)}|-|\tilde{\lambda}^{(j-1)}|\leq 1$ for all $1\leq j\leq r$.
    Here $\tilde{\lambda}^{(j)}_{\ell}:=\#\{ i\mid\lambda_{\ell}^{(j)}\geq i\}$ is the \emph{conjugate partition} of $\lambda^{(j)}$.
    The partition $\rho-\mu$ is called the \emph{shape} of the tableaux and the partition $(|\lambda^{(1)}|-|\lambda^{(0)}|,\dots ,|\lambda^{(r)}|-|\lambda^{(r-1)}|)$ the \emph{weight}.
\end{definition}

\subsection{Bases, generators and equivalence between symmetric polynomials and invariant polynomials of a Hermitian matrix argument}
In this subsection we will describe a few standard bases for the vector space of symmetric polynomials relevant to the rest of this paper.
Note that all the basis vectors are homogeneous polynomials.
By symmetry the \emph{monomial symmetric polynomials}, i.e. polynomials of the form
\begin{equation*}
    m_{\tau}(\lambda_1,\dots ,\lambda_k) :=\sum_{\sigma\in S_k}\lambda_{\sigma (1)}^{\tau_1}\cdots\lambda_{\sigma (k)}^{\tau_k}
\end{equation*}
for some partition $\tau$, form a basis of the space of symmetric polynomials.
The partitions can thus be used to index bases of the space of symmetric polynomials.
The largest partition with respect to the reverse lexicographic ordering appearing in the expansion of a symmetric polynomial $P$ is called the \emph{absolute degree} of $P$.
The weight of the absolute degree is called the \emph{total degree}.
Note that this agrees with the usual notion of total degree for multivariate polynomials.

The \emph{elementary symmetric polynomials} are defined by
\begin{equation*}
    e_{r}(\lambda_1,\dots ,\lambda_m ):=\sum_{1\leq \ell_1<\dots <\ell_{r}\leq m}\lambda_{\ell_1}\cdots\lambda_{\ell_{r}}
\end{equation*}
for $r\in\mathbb{N}$.
By the fundamental theorem of symmetric polynomials it follows that the polynomials
\begin{equation*}
    e_{\tau}(\lambda_1,\dots ,\lambda_m ) :=\prod_{j=1}^me_{\tau_j}(\lambda_1,\dots ,\lambda_k)
\end{equation*}
with $\tau\in\mathcal{P}$ also form a basis.

Before we describe the other bases, we will explain the equivalence between symmetric polynomials and polynomials in the coefficient of a Hermitian matrix which are invariant under conjugation by unitary matrices.
We will denote the space of $m\times m$ Hermitian matrices by $\mathrm{Her}(m)$.
We can see a symmetric orthogonal polynomial $P$ as a function in the coefficients of a $m\times m$ Hermitian matrix as follows
\begin{equation*}
    P(\Lambda ):=P(\lambda_1,\dots ,\lambda_m) ,
\end{equation*}
where $\Lambda$ is a $m\times m$ Hermitian matrix with eigenvalues $\lambda_1 ,\dots ,\lambda_m$.
Note that $P$ is invariant under conjugation by unitary matrices.
In fact, this is a polynomial in the coefficients of the Hermitian matrix.
Indeed, we can extend $P$, as a symmetric polynomial, in terms of the elementary symmetric functions.
By the \emph{Newton identities} the elementary symmetric functions can be seen as polynomials in the coefficients of a Hermitian matrix \cite[\textsection I (2.11')]{macdonald1998symmetric}.
Note that the individual eigenvalues can in general not be written as a polynomial in the coefficients of the matrix, only as an algebraic function.

The other way around we can consider polynomials in the coefficients of a Hermitian matrix invariant under conjugation by unitary matrices.
These can clearly be seen as polynomials in the eigenvalues.
Indeed, a Hermitian matrix can always be diagonalised, i.e. $\mathrm{diag} (\lambda_1,\dots ,\lambda_m) =V\Lambda V^*$ for some $V\in\mathbf{U}(m)$.
The key thing to note here is that the coefficients of $\mathrm{diag}(\lambda_1,\dots ,\lambda_m)$ are precisely the eigenvalues of $\Lambda$.
Note also that these polynomials are automatically symmetric, since one can swap any two eigenvectors in $V$ and obtain another unitary matrix.

We will denote the space of polynomials in the coefficients of a Hermitian matrix invariant under conjugation by unitary matrices by $\mathbf{P}(\mathrm{Her}(m))^{\mathbf{U}(m)}$.
We have just shown that there is an isomorphism
\begin{equation*}
    P(\mathbb{R}^m)^{S_m}\cong\mathbf{P}(\mathrm{Her}(m))^{\mathbf{U}(m)} .
\end{equation*}
These spaces are graded by homogenous polynomials of the same total degree and it is not too difficult to seen that this isomorphism respects this grading.

Related to the elementary symmetric polynomials are the \emph{complete symmetric polynomials}
\begin{equation}\label{eq:complete-symmetric-polynomial}
    h_r(\lambda_1,\dots ,\lambda_m) :=\sum_{\ell_1 +\dots +\ell_m =r}\lambda_1^{\ell_1}\cdots\lambda_m^{\ell_m} .
\end{equation}
The products
\begin{equation*}
    h_{\tau}(\lambda_1,\dots ,\lambda_m ) :=\prod_{j=1}^mh_{\tau_j}(\lambda_1,\dots ,\lambda_k)
\end{equation*}
with $\tau\in\mathcal{P}$ also form a basis.

Yet another basis is given by the \emph{power sum symmetric polynomials}, which are defined by
\begin{align*}
    PS_{\tau}(\Lambda ) 
    :=\prod_{j=1}^m\mathrm{Tr}(\Lambda^{\tau_j})
    =\prod_{j=1}^m\sum_{\ell =1}^m\lambda^{\tau_j}_{\ell}
\end{align*}
for some partition $\tau$.
Clearly the absolute degree of $PS_{\tau}$ is $(|\tau|,0,\dots ,0)$.
For a proof that these polynomials form a basis see \cite[\textsection I (2.12)]{macdonald1998symmetric}.
The power sum symmetric polynomials can for example be used to define a product on the symmetric polynomials, called the \emph{Frobenius} or sometimes the \emph{Kronecker product} \cite{REMMEL1989100}.
They arguably have the simplest expression in terms of the coefficients of a Hermitian matrix.

Finally, we will describe a basis which will be particularly useful in the proof of some results in chapter \ref{chap:invariant-polynomials}.
\begin{definition}
    The \emph{Schur polynomials} are defined by
    \begin{align*}
        s_{\tau}(\lambda_1 ,\dots ,\lambda_m ):=\frac{\det\left((\lambda_{j}^{\tau_j +m-\ell })_{1\leq\ell ,j\leq m}\right)}{\prod_{1\leq\ell <j\leq m}(\lambda_{\ell} -\lambda_j)} ,
    \end{align*}
    for a partition $\tau =(\tau_1,\dots ,\tau_m)$.
    The polynomial $s_{\tau}$ has absolute degree $\tau$ as can be seen from the expansion \cite[(0.6)]{Macdonald1992}.
\end{definition}

For a proof that they form a basis, see for example \cite[\textsection I (3.2)]{macdonald1998symmetric}.

By \cite[Theorem 4.1.1.]{kulikauskas1993symmetric} the Schur functions can be written in terms of the coefficients of a Hermitian matrix as follows
\begin{align}\label{eq:Schur-polynomials-matrix-coefficients}
    s_{\tau}(\Lambda )=\frac{\det ((\Lambda^{\tau_{\ell} +m-\ell })_{jj})_{1\leq \ell,j\leq m}}{\det ((\Lambda^{m-\ell })_{jj})_{1\leq\ell ,j\leq m}}
\end{align}
for a partition $\tau$, see also \cite{BROIDO199621}.
The proof of this theorem also shows that it is a polynomials in the coefficients of the matrix, see also proposition \ref{prop:kulikauskas-formula}.
We refer to \cite{kulikauskas1993symmetric} for expressions of the monomial symmetric functions and complete symmetric polynomials in terms of the coefficients of a Hermitian matrix.

\begin{remark}
We can extend functions of a Hermitian matrix argument invariant under conjugation by unitary matrices to general complex matrices by seeing them as the corresponding symmetric functions in the eigenvalues, this is equivalent to expanding the polynomial in the power sum symmetric polynomials and evaluating the corresponding trace expression for general matrices.
Note that in this extension we go from real valued functions to complex valued ones.
However, we don't have the equivalence $P(\mathbb{C}^m)^{S_m}\cong\mathbf{P}(\mathbb{C}^{m\times m})^{\mathbf{U}(m)}$, since some (invertible) complex matrices cannot be diagonalised.
\end{remark}

The Schur polynomials are a scalar multiple of the zonal polynomials with respect to the Gelfand pair $(Gl_m(\mathbb{C}) ,\mathbf{U}(m))$ \cite[\textsection VII 5.]{macdonald1998symmetric}.
We will now briefly recall what this means.
The complex general linear group $Gl_m(\mathbb{C})$ induces a representation on $\mathbf{P}_{r}(\mathrm{Her}(m))$ by the congruence transform $\Lambda\mapsto g\Lambda g^*$ for $r\in\mathbb{N}$.
We then have the following decomposition into irreducible subrepresentations
\begin{equation}\label{eq:single-variable-irreducible-decomposition}
    \mathbf{P}_r(\mathrm{Her}(m)) =\bigoplus_{\tau\vdash r}V_{\tau}\otimes\overline{V}_{\tau},
\end{equation}
where $V_{\tau}$ is some representation isomorphic to the irreducible invariant representations of $Gl_m(\mathbb{C})$ corresponding to a partition $\tau$ and $\overline{V}_{\tau}$ is its conjugate representation, see for example \cite[\textsection I A 8.]{macdonald1998symmetric}.
The subrepresentation $V_{\tau}\otimes\overline{V}_{\tau}$ has a $1$-dimensional subspace invariant under the action of $Gl_m(\mathbb{C})$ restricted to $\mathbf{U}(m)$.
The Schur polynomial $s_{\tau}$ spans this subspace.

In particular, they have the following properties, which will be important for us:
\begin{equation}\label{eq:integral-formulas-Schur-functions}
\begin{split}
    \int_{\mathbf{U}(m)}s_{\tau}(AUBU^*)dU =&\frac{s_{\tau}(A)s_{\tau}(B)}{s_{\tau}(I_m)},\\
    \int_{\mathbf{U}(m)}s_{\tau}(AU)s_{\rho}(BU^*)dU =&\delta_{\tau\rho}\frac{s_{\tau}(AB)}{s_{\tau} (I_m)}
    \end{split}
\end{equation}
for $A,B\in\mathfrak{gl}_m(\mathbb{C})$ and $\tau,\rho\in\mathcal{P}$, see also \cite[Lemma 2.1.]{novak2022}.
  
\subsection{Invariant orthogonal polynomials}\label{subsec:invariant-orthogonal-polynomials}
In this subsection, we describe the basics of the theory of orthogonal invariant polynomials in a more general setting.
Let $G$ be a Lie group acting smoothly on some finite dimensional vector space $V$.
The vector space of polynomials on $V$ invariant under $G$ will be denoted by $\mathbf{P}(V)^G$.
In the next subsection we will specialise to $\mathbf{P}(V)^G=\mathbf{P}(\mathrm{Her}(m))^{\mathbf{U}(n)}$.
Let $\mu$ be some compactly supported measure on $V$ invariant\footnote{A measure $\mu$ on $V$ is called \emph{invariant} under $G$ if $gA:=\{ gv\mid v\in A\}$ is measurable and $\mu (gA)=\mu (A)$ for measurable $A\subseteq V$.} under $G$.
To ensure that the inner product given by integration with respect to $\mu$ is nondegenerate, we will assume that $\mu$ is supported on a set with non-empty interior.
Let $\Theta$ be a graded index set partially ordered by some partial order $\leq$, we will denote the grade of $\theta\in\Theta$ by $|\theta |\in\mathbb{N}$, such that $\theta\leq\rho$ implies $|\theta|\leq |\rho|$ and there exists some basis $(b_{\theta})_{\theta\in\Theta}$ of $\mathbf{P}(V)^G$ such that $b_{\theta}$ has total degree $|\theta |$.
We will call such a basis $(b_{\theta})_{\theta\in\Theta}$ a \emph{graded basis}.
We will denote the set of homogeneous polynomials in $\mathbf{P}(V)^G$ of total degree $r$ by $\mathbf{P}_r(V)^G$.

Polynomials $(P_{\theta})_{\theta\in\Theta}$ are called \emph{orthogonal} with respect to $\mu$ if the total degree of $P_{\theta}$ is $|\theta |$ and
\begin{equation*}
    \int_{V} P_{\theta}(x) P_{\rho}(x)\mu (dx) =\delta_{\theta\rho}h_{\theta}\textrm{ for all }\theta ,\rho\in\Theta .
\end{equation*}
By a dimension argument a system of invariant orthogonal polynomials $(P_{\theta})_{\theta\in\Theta ,|\theta|\leq r}$ forms a graded basis of $\bigoplus_{j=1}^r\mathbf{P}_j(V )^{G}$.
In particular, this means that $P_{\theta}$ is orthogonal to all polynomials of total degree strictly less than $|\theta |$.

A linear operator $\mathcal{L}:P(V)^G\rightarrow P(V)^G$ on $V$ invariant under $G$ is called \emph{symmetric} with respect to $\mu$ if
\begin{equation*}
    \int (\mathcal{L}f)gd\mu =\int f(\mathcal{L}g)d\mu
\end{equation*}
for all polynomials $f,g\in \mathbf{P}(V)^G$.
We will also say that the measure $\mu$ is the \emph{symmetric and invariant} measure of the operator $\mathcal{L}$.
Note that, in many cases of interest, there exist multiple symmetric linear operators for a given (probability) measure.
Furthermore, we call $\mathcal{L}$ \emph{polynomial} if it maps $\mathbf{P}_r(V)^G$ into itself for all $r\in\mathbb{N}$.
Polynomial operators are quite rare, see for example \cite{Songzi-Li2019} for the case when $G$ acts trivially on $V$.

The eigenpolynomials of $\mathcal{L}$ with different eigenvalues are easily seen to be orthogonal.
Indeed, if $f,g$ are eigenfunctions of $\mathcal{L}$ with eigenvalues $\lambda\neq\nu$ respectively, then
\begin{equation*}
    \lambda\int fgd\mu =\int (\mathcal{L}f)gd\mu =\int f(\mathcal{L}g)d\mu =\nu\int fgd\mu ,
\end{equation*}
which shows that the integral $\int fgd\mu$ has to be zero.
Note that the eigenfunctions with the same eigenvalues form a vector space, so we can always find orthogonal eigenfunctions if the eigenvalues are the same using the Gram--Schmidt algorithm.

The other implication does not hold in general, for this we need the following definitions, see \cite[Definition 1.3.4.]{HECKMAN1995}.

\begin{definition}
    Let $(b_{\theta})_{\theta\in\Theta}$ be a graded basis of $\mathbf{P}(V)^{G}$.
    A linear operator $\mathcal{L}:\mathbf{P}(V)^{G}\rightarrow \mathbf{P}(V)^{G}$ is called \emph{triangular} with respect to the ordered basis $(b_{\tau})_{\tau\in\mathcal{P}}$ if
    \begin{equation}\label{eq:triangular-operator}
        \mathcal{L}b_{\theta} =\sum_{\rho\leq\theta} a_{\theta\rho}b_{\rho}
    \end{equation}
    for all $\theta\in\Theta$.
    Furthermore, a polynomial $P\in\mathbf{P}(V)^{G}$ is called \emph{triangular} and of \emph{degree} $\theta$ with respect to the partially ordered basis $(b_{\rho})_{\rho\in\Theta}$ if
    \begin{equation*}
        P(x )=\sum_{\rho\leq\theta} c_{\theta\rho}b_{\rho}(x)\textrm{ with } c_{\theta\theta}\neq 0.
    \end{equation*}
    If $c_{\theta\theta} =1$, we will call the triangular polynomial \emph{monic}.
\end{definition}

Note that the set of partitions with respect to the natural ordering form a lattice.
In particular, any two elements have an upper bound (partitions of the form $(r,0,\dots ,0)$ are for example larger than all paritions $\tau$ such that $|\tau |\leq r$).
This means that every symmetric polynomials would be triangular with respect to any basis ordered by the natural ordering if we drop the condition $c_{\theta\theta}\neq 0$.

Note also that saying that the polynomials in one basis are (monic) triangular in terms of another basis is the same as saying that the transition matrix is lower diagonal (with one's on the diagonal) with respect to any linear ordering extending the partial ordering.
Since the inverse of a lower triangular matrix (with one's on the diagonal) is again a lower triangular matrix (with one's on the diagonal), being a basis which is (monic) triangular with respect to another basis is an equivalence relation.
Triangularity depends on the basis up to this equivalence.

\begin{example}
In this example the partitions are ordered with respect to the natural ordering.
The Schur polynomials are monic triangularly equivalent to the symmetric monomial polynomials \cite[Corollary 7.10.6]{Stanley1999}.
The corresponding coefficients are called the \emph{Kostka numbers}.
See also \cite[\textsection I 6.]{macdonald1998symmetric} for more relations between the different bases.
\end{example}

The importance of the concept of triangularity is explained by the following proposition, which is \cite[Proposition 1.3.5.]{HECKMAN1995} in a different setting.

\begin{proposition}\label{prop:triangularity-eigenpolynomials}
    Let $(b_{\theta})_{\theta\in\Theta}$ be a graded basis of $\mathbf{P}(V)^{G}$, $\mu$ a measure on $V$ invariant under $G$, and $\mathcal{L}$ an operator symmetric with respect to $\mu$ and triangular with respect to $(b_{\theta})_{\tau\in\mathcal{P}}$.
    Assume that $P$ is a triangular symmetric polynomial of degree $\theta$ such that
    \begin{equation*}
        \int_{V}P(x )b_{\rho}(x )\mu (dx )=0\textrm{ for all }\rho <\theta .
    \end{equation*}
    Then, $P$ is a eigenfunction of $\mathcal{L}$ with eigenvalue $a_{\theta\theta}$, where $a_{\theta\theta}$ is the leading term in \eqref{eq:triangular-operator}.
\end{proposition}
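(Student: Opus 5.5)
Write $P=\sum_{\rho\leq\theta}c_{\theta\rho}b_\rho$ with $c_{\theta\theta}\neq 0$, and consider $Q:=\mathcal{L}P-a_{\theta\theta}P$. The aim is to show that $Q=0$. The argument combines triangularity, which controls which basis vectors can appear in $Q$, with symmetry, which shows that $Q$ is orthogonal to all of them.

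\emph{Step 1 (triangular support of $Q$).} By linearity and \eqref{eq:triangular-operator},
\begin{equation*}
\mathcal{L}P=\sum_{\rho\leq\theta}c_{\theta\rho}\sum_{\sigma\leq\rho}a_{\rho\sigma}b_\sigma .
\end{equation*}
By transitivity of $\leq$, every index occurring here satisfies $\sigma\leq\theta$. The coefficient of $b_\theta$ receives a contribution only from $\rho=\theta$, because $\theta\leq\rho\leq\theta$ forces $\rho=\theta$ by antisymmetry. That contribution is $c_{\theta\theta}a_{\theta\theta}$, and it cancels against $a_{\theta\theta}P$. Hence
\begin{equation*}
Q=\sum_{\sigma<\theta}q_\sigma b_\sigma ,
\end{equation*}
a finite sum.

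\emph{Step 2 (orthogonality of $Q$ to the lower span).} Let $W$ denote the span of $\{b_\rho:\rho<\theta\}$. By hypothesis $\int P\,b_\rho\,d\mu=0$ for all $\rho<\theta$, so $P\perp W$. For $\rho<\theta$, symmetry of $\mathcal{L}$ gives
\begin{equation*}
\int(\mathcal{L}P)\,b_\rho\,d\mu=\int P\,(\mathcal{L}b_\rho)\,d\mu .
\end{equation*}
By triangularity, $\mathcal{L}b_\rho$ is a combination of $b_\sigma$ with $\sigma\leq\rho<\theta$. Since the relation "$\sigma\leq\rho$ and $\rho<\theta$" implies $\sigma<\theta$ (again by antisymmetry), $\mathcal{L}b_\rho\in W$. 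So the right-hand side vanishes, and therefore $\int Q\,b_\rho\,d\mu=0$ for every $\rho<\theta$.

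\emph{Step 3 (conclusion).} From Step 1, $Q\in W$, and from Step 2, $Q\perp W$. Hence $\int Q^2\,d\mu=0$. The form $(f,g)\mapsto\int fg\,d\mu$ is positive definite on polynomials, because $\mu$ is supported on a set with nonempty interior and a polynomial vanishing $\mu$-a.e. there vanishes identically. Thus $Q=0$, i.e. $\mathcal{L}P=a_{\theta\theta}P$.

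The main subtlety is the "lower set" bookkeeping: one must check that $W$ is $\mathcal{L}$-stable and that all these sums are finite. Finiteness could fail for a general partial order, but here it is fine: each $\mathcal{L}b_\rho$ is a single polynomial, and $P$ is a given finite expansion. The polynomials are real valued, so the integrals make sense as real inner products. If they were complex, I would use $\int Q\bar Q\,d\mu$ together with the analogous symmetry identity.
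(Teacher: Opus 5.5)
Your proof is correct and follows the same route as the paper's. Both combine triangularity, which places $\mathcal{L}P$ in $\mathrm{span}\{b_\rho\mid\rho\leq\theta\}$, with symmetry, which makes $\mathcal{L}P$ orthogonal to $W=\mathrm{span}\{b_\rho\mid\rho<\theta\}$. The only difference is bookkeeping: the paper gets $\mathrm{span}\{b_\rho\mid\rho\leq\theta\}\cap W^{\perp}=\mathrm{span}\{P\}$ from a dimension count and reads off the eigenvalue afterwards, while you subtract $a_{\theta\theta}P$ first and conclude from $W\cap W^{\perp}=\{0\}$, which makes explicit the nondegeneracy of $\int fg\,d\mu$ on $W$ that the paper's dimension count uses without stating it.
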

\begin{proof}
Assume
\begin{equation*}
    \mathcal{L}b_{\rho'} =\sum_{\rho\leq\rho'} a_{\rho'\rho}b_{\rho}\,\textrm{ and }\,
    P(x )=\sum_{\rho\leq\theta} c_{\theta\rho}b_{\rho}(x) .
\end{equation*}
We therefore have
\begin{equation}\label{eq:expansion-LP}
    \mathcal{L}P=\sum_{\rho\leq\theta} c_{\theta\rho}\mathcal{L}b_{\rho}
    =\sum_{\rho'\leq\rho\leq\theta} c_{\theta\rho}a_{\rho\rho'}b_{\rho'} .
\end{equation}
Because $\mathcal{L}$ is symmetric, we also obtain
\begin{equation*}
    \int_{V} (\mathcal{L}P)b_{\rho}d\mu =
    \int_{V} P(\mathcal{L}b_{\rho})d\mu
    =\sum_{\rho'\leq\rho}a_{\rho\rho'}\int_{V} Pb_{\rho'}d\mu
    =0
\end{equation*}
for $\rho <\theta$.
Taken together we see
\begin{equation*}
    \mathcal{L}P\in \mathrm{span}_{\mathbb{C}}\{ b_{\rho}\mid\rho\leq\theta\}\cap (\mathrm{span}_{\mathbb{C}}\{ b_{\rho}\mid\rho <\theta\})^{\perp} =\mathrm{span}_{\mathbb{C}}\{ P\},
\end{equation*}
where $\perp$ denotes the orthogonal complement with respect to the inner product given by integration with respect to $\mu$.
The second equality follows from a dimension argument together with the assumptions on $P$.
We see that $\mathcal{L}P=aP$ for some $a\in\mathbb{C}$.
That $a=a_{\theta\theta}$ follows directly from the expansion \eqref{eq:expansion-LP}.
\end{proof}

\begin{corollary}\label{cor:eigenfunctions-orthogonal-polynomials}
    Let $(P_{\theta})_{\theta\in\Theta}$ be orthogonal polynomials with respect to some invariant measure $\mu$.
    Assume that $P_{\rho}$ is triangular and of degree $\rho$ with respect to some partially order basis $(b_{\tau})_{\tau\in\mathcal{P}}$ for $\rho\in\Theta$, then
    \begin{equation}\label{eq:orthogonal-with-respect-to-basis}
        \int_{V}P_{\theta}(x )b_{\rho}(x )\mu (d\Lambda )=0\textrm{ for all }\rho <\tau\in\mathcal{P} .
    \end{equation}
    In particular, $(P_{\theta})_{\theta\in\Theta}$ forms a basis of simultaneous eigenfunctions of operators which are symmetric with respect to $\mu$ and triangular with respect to $(b_{\theta})_{\theta\in\Theta}$.
\end{corollary}

The reverse statement, i.e. that any set of polynomials $(P_{\theta})_{\theta\in\Theta}$ triangular satisfying \eqref{eq:orthogonal-with-respect-to-basis}, will be orthogonal polynomials, will not hold in general.
Indeed, the condition \eqref{eq:orthogonal-with-respect-to-basis} will be an empty condition if the partial order is trivial.

In some special cases, orthogonal polynomials with respect to a measure $\mu$ can be constructed using the Gram--Schmidt process with respect to a basis $(b_{\theta})_{\theta\in\Theta}$.
Since the ordering on $\Theta$ is partial and not total, it is in general not true that these forms an orthogonal basis.
Normally orthogonality is shown by showing that the commuting algebra of triangular and invariant operators is large enough to separate points, this is the basis of the theory outlined in \cite{HECKMAN1995}.
However, there does not seem to be much literature about the Gram--Schmidt process in this general setting.

\subsection{Hermitian Jacobi polynomials}
We will now return to the case $G=\mathbf{U}(m)$, which works on $V=\mathrm{Her}(m)$ by conjugation.
Recall that $\Lambda$ denotes a $m\times m$ Hermitian matrix with eigenvalues $\lambda_1 ,\dots ,\lambda_m$.
In this case the graded index set is given by the partitions $\mathcal{P}$ with grading the weight of the partition and the partial ordering is the graded natural ordering.
A linear operator $\mathcal{L}$ with respect to some invariant measure $\mu$ can be seen both as a linear function between symmetric polynomials and polynomials in matrix coefficients invariant under conjugation by unitary matrices.
In other words it can be expressed both in terms of the eigenvalues and in terms of the matrix coefficients.

We represent the Lebesgue measure on the coefficients of a complex matrix by $d\Lambda$ and the normalised Haar measure on $\mathbf{U}(m)$ by $dU$.
The Jacobian of the transformation from the coefficients of the matrix to the eigenvalues is given by the square of the Vandermonde determinant, i.e.
\begin{equation*}
    d\Lambda =\frac{\pi^{\frac{n(n-1)}{2}}}{\prod_{j=1}^m j!}dU\prod_{1\leq\ell <j\leq m} (\lambda_{\ell} -\lambda_j)^2d\lambda_1\dots d\lambda_m .
\end{equation*}
This is a special case of the Weyl integration formula, see \cite[Theorem 10.1.4]{Faraut2008-ov}.
When integrating invariant functions the $dU$ term drops out.

We let $\kappa_1,\kappa_2 >m/2-1$ be fixed.
In this section we will consider a basis of symmetric polynomials orthogonal with respect to the \emph{complex matrix variate Beta distribution}
\begin{equation}\label{eq:complex-matrix-variate-Beta-distribution}
    \begin{split}
        B_m^{(\kappa_1 ,\kappa_2)}(d\Lambda) :=&C_{\kappa_1, \kappa_2}\det (\Lambda )^{\kappa_1 -m/2}\det (I-\Lambda)^{\kappa_2 -m/2}d\Lambda \\
        =&C_{\kappa_1 ,\kappa_2}\left(\prod_{1\leq\ell <j\leq m}(\lambda_{\ell} -\lambda_j)^2\right)\prod_{j=1}^m \lambda_j^{\kappa_1 -m/2}(1-\lambda_j)^{\kappa_2-m/2} d\lambda_j ,
    \end{split}
\end{equation}
where $C_{\kappa_1 ,\kappa_2} >0$ is a normalisation constant.
These polynomials will serve as a good example of invariant polynomials of a Hermitian matrix argument.
Furthermore, the main aim of this paper is to describe a multivariate matrix extension of these polynomials.

\begin{definition}
    The \emph{Hermitian Jacobi polynomials} are defined by
    \begin{equation}\label{eq:def-HO-polynomials}
        P_{\tau}^{(\kappa_1,\kappa_2 ,2)}(\Lambda ):=\frac{\det\left( (P^{(\kappa_1 -m/2,\kappa_2 -m/2)}_{\tau_{\ell }+m-\ell } (\lambda_{j} ))_{1\leq \ell ,j\leq m}\right)}{\prod_{1\leq \ell < j\leq m} (\lambda_{\ell} -\lambda_j)} ,
    \end{equation}
    for $\tau\in\mathcal{P}$, where $P^{(\alpha ,\beta)}_j$ is the $j^{\textrm{th}}$-Jacobi polynomial of index $(\alpha ,\beta )$.
\end{definition}

The Hermitian Jacobi polynomials are a special case of the \emph{Heckman--Opdam polynomials of type BC} also sometimes referred to as the \emph{multivariate Jacobi polynomials}.

A special case of \cite[Theorem 3.1.]{BALDERRAMA2005486} shows that the Hermitian Jacobi polynomials are orthogonal with respect to the weight $B_m^{(\kappa_1 ,\kappa_2)}$ and by multilinearity of the determinant they can be shown to be triangular with respect to the Schur polynomials and thus the symmetric monomial polynomials, see also \cite[Proposition 12.3.1]{Faraut2008-ov}.
In particular, the Hermitian Jacobi polynomials are obtained by applying the Gram--Schmidt orthogonalisation process to the Schur functions with respect to the complex matrix variate Beta distribution. 

A symmetric linear operator with respect to the complex matrix variate Beta distribution $B_m^{(\kappa_1 ,\kappa_2)}$ is given by
\begin{equation*}
\begin{split}
    \mathcal{L}_{(\kappa_1,\kappa_2)}^m =&4\sum_{j=1}^m\lambda_j(1-\lambda_j) \frac{\partial^2}{\partial\lambda_j^2} \\
    &-2\sum_{j=1}^m\left(\left(\kappa_1-\frac{m}{2}\right)
    -\left( \kappa_1 +\kappa_2-m\right)\lambda_j +2\sum_{\ell =1;\ell\neq j}^m\frac{\lambda_j (1-\lambda_{\ell}) +\lambda_{\ell} (1-\lambda_j)}{\lambda_{j} -\lambda_{\ell}}\right)\frac{\partial}{\partial\lambda_j} ,
\end{split}
\end{equation*}
in terms of the eigenvalues, see \cite[Lemma 8.1.10.]{Baudoin2024-is}.
In terms of the matrix coefficients this operator takes the form
\begin{equation}\label{eq:HO-operator}
\begin{split}
    \mathcal{L}_{(\kappa_1 ,\kappa_2 )}^m =& \frac{1}{2}\sum_{\alpha ,\beta ,\gamma ,\delta =1}^m ((I_m-\Lambda )_{\alpha \delta}\Lambda_{\gamma\beta} +(I_m-\Lambda )_{\gamma\beta}\Lambda_{\alpha \delta})\frac{\partial^2}{\partial\Lambda_{\gamma\delta}\partial\Lambda_{\alpha\beta}} \\ 
    &+\sum_{\alpha ,\beta =1}^m\left(\left(\kappa_1 +\frac{m}{2}\right) \delta_{\alpha\beta}-\left(\kappa_1 +\kappa_2 +m\right)\Lambda_{\alpha\beta}\right)\frac{\partial}{\partial\Lambda_{\alpha\beta}} .
\end{split}
\end{equation}
Note that the two terms in the first sum are the same up to replacing $(\alpha ,\delta)$ by $(\gamma ,\beta )$ and every term thus appears twice in the sum.
The operator $\mathcal{L}$ is triangular with respect to the monomial symmetric polynomials \cite[Proposition 1.3.10.]{HECKMAN1995}, and thus with respect to the Schur polynomials.

In particular, the Hermitian Jacobi polynomials of index $(\kappa_1,\kappa_2 )$ are eigenfunctions of the operator $\mathcal{L}_{(\kappa_1 ,\kappa_2)}$ \cite[Proposition 1.3.5.]{HECKMAN1995}.
The eigenvalue of $\mathcal{L}_{(\kappa_1,\kappa_2)}$ corresponding to $P_{\tau}^{(\kappa_1,\kappa_2 ,2)}$ is equal to
\begin{equation*}
    r_{\tau}^{(\kappa_1 ,\kappa_2)} :=-\sum_{j=1}^m\tau_j\left(\tau_j +\kappa_1 +\kappa_2 +1+m - 2j\right)\textrm{ for }\tau\in\mathcal{P},
\end{equation*}
see for example \cite[\textsection B.4]{Baudoin2024-is}.
We will however need a slightly stronger result.

\begin{proposition}\label{prop:identity-on-irreducible-components}
Let $V_{\tau},\overline{V}_{\tau}$ be as in equation \eqref{eq:single-variable-irreducible-decomposition}.
The part of $\mathcal{L}_{(\kappa_1,\kappa_2)}$ which preserves the total degree acts as the identity times $r_{\tau}^{(\kappa_1 ,\kappa_2)}$ on $V_{\tau_j}\otimes\overline{V}_{\tau_j}$.
\end{proposition}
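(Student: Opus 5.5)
The plan is to split $\mathcal{L}_{(\kappa_1,\kappa_2)}$ by degree and then combine an equivariance argument with a computation on a single vector. In the matrix-coefficient form \eqref{eq:HO-operator}, write $\mathcal{L}=\mathcal{L}_0+\mathcal{L}_{-1}$. Here $\mathcal{L}_0$ collects the terms that preserve total degree:
\begin{equation*}
\mathcal{L}_0=-\frac{1}{2}\sum_{\alpha,\beta,\gamma,\delta=1}^m(\Lambda_{\alpha\delta}\Lambda_{\gamma\beta}+\Lambda_{\gamma\beta}\Lambda_{\alpha\delta})\frac{\partial^2}{\partial\Lambda_{\gamma\delta}\partial\Lambda_{\alpha\beta}}-(\kappa_1+\kappa_2+m)\sum_{\alpha,\beta=1}^m\Lambda_{\alpha\beta}\frac{\partial}{\partial\Lambda_{\alpha\beta}}.
\end{equation*}
The operator $\mathcal{L}_{-1}$ collects the remaining terms, which lower the degree by one.

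The first step is to check that $\mathcal{L}_0$ commutes with the congruence action $\Lambda\mapsto g\Lambda g^*$ of $Gl_m(\mathbb{C})$ on $\mathbf{P}_r(\mathrm{Her}(m))$. The Euler operator $\sum\Lambda_{\alpha\beta}\partial_{\alpha\beta}$ clearly commutes with this action. For the second-order part, I would rewrite it in the coordinate-free form $\mathrm{Tr}(\Lambda\,\partial\,\Lambda\,\partial)$, where $\partial=(\partial/\partial\Lambda_{\beta\alpha})$ is the matrix of derivatives. Under $\Lambda\mapsto g\Lambda g^*$ the derivative matrix transforms as $\partial\mapsto g^{*-1}\partial g^{-1}$, so the trace expression is preserved. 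Verifying this index bookkeeping is routine; the one delicate point is that $\mathcal{L}$ is really defined only on $\mathrm{Her}(m)$. I would handle this by working in the complexification $\mathbf{P}(\mathfrak{gl}_m(\mathbb{C}))$, where $\Lambda_{\alpha\beta}$ and $\Lambda_{\beta\alpha}$ are independent, and restrict back to $\mathrm{Her}(m)$ at the end.

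The second step is to apply Schur's lemma to the decomposition \eqref{eq:single-variable-irreducible-decomposition}. The summands $V_\tau\otimes\overline{V}_\tau$ are irreducible for $Gl_m(\mathbb{C})$ acting through $g\Lambda g^*$, and they are pairwise non-isomorphic for distinct $\tau\vdash r$. An equivariant operator therefore preserves each summand and acts on it as a scalar $c_\tau$. If the multiplicity-free claim needs justification, I would use that the highest weights $(\tau,\tau)$ are distinct.

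The third step identifies the scalar by testing on the Schur polynomial $s_\tau$, which lies in $V_\tau\otimes\overline{V}_\tau$. This gives $\mathcal{L}_0s_\tau=c_\tau s_\tau$. Since $\mathcal{L}_{-1}$ lowers degree, $c_\tau$ is exactly the coefficient of $s_\tau$ in $\mathcal{L}s_\tau$. By triangularity of $\mathcal{L}$ with respect to the Schur polynomials and Proposition \ref{prop:triangularity-eigenpolynomials}, this coefficient is the eigenvalue of the Hermitian Jacobi polynomial $P_\tau^{(\kappa_1,\kappa_2,2)}$, which is $r_\tau^{(\kappa_1,\kappa_2)}$.

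As an alternative to the third step, one can compute directly with the eigenvalue form of the operator. On $\prod_j\lambda_j^{\tau_j}$ the top-degree part of $4\lambda_j(1-\lambda_j)\partial_j^2$ and of the drift produces the leading coefficient $-\sum_j\tau_j(\tau_j+\kappa_1+\kappa_2+1+m-2j)$.

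I expect the main obstacle to be the equivariance check of the second-order part on Hermitian rather than general matrices. This is why I would pass to the holomorphic complexification first.
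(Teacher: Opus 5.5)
Your proposal follows the paper's proof essentially step for step. You split off the degree-preserving part and check that it intertwines the congruence action of $Gl_m(\mathbb{C})$: the Euler field trivially, the quadratic part by a chain-rule computation, which your passage to the holomorphic complexification handles more cleanly than the paper's index bookkeeping. You then apply Schur's lemma to the multiplicity-free decomposition \eqref{eq:single-variable-irreducible-decomposition}, and read off the scalar from the known eigenvalue $r_{\tau}^{(\kappa_1,\kappa_2)}$ of $P_{\tau}^{(\kappa_1,\kappa_2,2)}$ via its leading Schur term.

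Only your optional direct computation is shaky as written, for two reasons. First, the eigenvalue form displayed in the paper is not normalised consistently with \eqref{eq:HO-operator}: its second-order coefficient is $4\lambda_j(1-\lambda_j)$ rather than $\lambda_j(1-\lambda_j)$. Second, the interaction terms must be applied to the symmetrised monomial $m_\tau$ rather than to $\prod_j\lambda_j^{\tau_j}$. To recover $r_{\tau}^{(\kappa_1,\kappa_2)}$ that way, you would need to use the eigenvalue form of \eqref{eq:HO-operator} itself.
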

\begin{proof}
    Since we are working with a graded ordering, we only need to look at the parts of $\mathcal{L}^m_{\alpha_j ,\beta_j}$ which do not lower the degree.
    These are given by
    \begin{equation*}
        \mathcal{D}_0:=-\sum_{a,b ,c,d =1}^m\Lambda_{ad}\Lambda_{cb}\frac{\partial^2}{\partial\Lambda_{cd}\partial\Lambda_{ab}} -(\alpha_j +\beta_j +m)\sum_{a,b=1}^m\Lambda_{ab}\frac{\partial}{\partial\Lambda_{ab}} .
    \end{equation*}
    We will show that this operator works as a multiple of the identity on $V_{\tau}\otimes\overline{V}_{\tau}$.
    The operator $\mathcal{D}_0$ is closely related to the Casimir operator on $\mathfrak{sl}_m(\mathbb{C})$, but we will use a more elementary approach to prove strong triangularity.
    We have the \emph{Euler vector field} part
    \begin{equation*}
        \mathcal{V}:=\sum_{a,b =1}^m\Lambda_{ab}\frac{\partial}{\partial\Lambda_{ab}}  .
    \end{equation*}
    The following standard argument shows that all homogeneous functions are eigenfunctions of $\mathcal{V}$.
    If $f$ is homogeneous of degree $r$ in the coefficients of $\Lambda$, i.e.
    \begin{equation*}
        g(\mu):=f(\mu\Lambda )-\mu^{r}f(\Lambda ) =0,
    \end{equation*}
    the chain rule gives
    \begin{equation*}
        0=\frac{d}{d\mu}g(\mu )=\mathcal{V}f(\mu\Lambda ) -r\mu^{r-1}f(\Lambda) .
    \end{equation*}
    For $\mu =1$ this gives $\mathcal{V}f=rf$.
    Since the space $V_{\tau}\otimes\overline{V}_{\tau}$ consists of homogeneous polynomials of degree $|\tau|$, $\mathcal{V}_{|V_{\tau}\otimes\overline{V}_{\tau}}=|\tau|\mathrm{Id}_{V_{\tau}\otimes\overline{V}_{\tau}}$.

    The quadratic term
    \begin{equation*}
        \mathcal{Q} :=\sum_{a,b ,c,d =1}^m\Lambda_{ad}\Lambda_{cb}\frac{\partial^2}{\partial\Lambda_{cd}\partial\Lambda_{ab}}
    \end{equation*}
    is an intertwiner of the $Gl_m(\mathbb{C})$ action on $\mathbf{P}_{r}(\mathrm{Her}(m))$ induced by $\Lambda\mapsto g\Lambda g^*$.
    Note that this uses that the operator preserves the total degree.
    To see that it is a $Gl_m(\mathbb{C})$-intertwiner, let $g\in Gl_{m}(\mathbb{C})$, then
    \begin{equation*}
        \mathcal{Q}F(g\Lambda g^*) =\sum_{a,b ,c,d =1}^m\sum_{\alpha,\beta,\gamma ,\delta =1}^mg_{\alpha a}\Lambda_{ad}(g^*)_{d\delta}g_{\gamma c}\Lambda_{cb}(g^*)_{b\beta}(\partial_{\gamma\delta}\partial_{\alpha\beta}F)(g\Lambda g^*) .
    \end{equation*}
    for $F\in P_{r}(\mathrm{Her}(m))$.
    Let $\Pi_{\rho}:\mathbf{P}_{r_j}(\mathrm{Her}(m))\rightarrow V_{\rho}\otimes\overline{V}_{\rho}$ be the projection onto the components of the decomposition into irreducible subspaces, which is an $Gl_m(\mathbb{C})$-intertwiner.
    By Schur's lemma $\Pi_{\rho }\circ\mathcal{Q}_{|V_{\tau}\otimes\overline{V}_{\tau}}$ is zero if $\rho\neq\tau$.
    This shows that $\mathcal{Q}$ maps $V_{\tau}\otimes\overline{V}_{\tau}$ into itself.
    Another application of Schur's lemma gives that the operator $\mathcal{Q}_{|V_{\tau}\otimes\overline{V}_{\tau}}=\Pi_{\tau }\circ\mathcal{Q}_{|V_{\tau}\otimes\overline{V}_{\tau}}$ acts as a scalar multiple of the identity on $V_{\tau_j}\otimes\overline{V}_{\tau_j}$.

    We see that the whole operator $\mathcal{D}_0$ acts as a scalar multiple on $V_{\tau_j}\otimes\overline{V}_{\tau_j}$, and this scalar has to be equal to $r_{\tau}^{\kappa}$ since the leading term of $P_{\tau}^{(\kappa_1,\kappa_2 ,2)}$ lies in $V_{\tau_j}\otimes\overline{V}_{\tau_j}$.
\end{proof}

Next we give an explicit expression for the Hermitian Jacobi polynomials in terms of the matrix coefficients.
Our proof relies heavily on the proof of \cite[Theorem 4.1.1.]{kulikauskas1993symmetric}.
We will first need a version of the Jacobi--Trudi identity for the Hermitian Jacobi polynomials, which is well known for the Schur polynomials \cite[Equation (3.4)]{macdonald1998symmetric}.

\begin{lemma}\label{lemma:Jacobi-Trudi-for-HO-polynomials}
    Let $h_r$ denote the complete symmetric polynomial of degree $r$, defined in equation \eqref{eq:complete-symmetric-polynomial}.
    Then, for a partition $\tau$, we have
    \begin{equation*}
        P_{\tau}^{(\kappa_1 ,\kappa_2 ,2)}(\Lambda) =\det\left(\left( P_{\tau_{\ell} +m -\ell}^{(\kappa_1 -m/2 ,\kappa_2 -m/2)} (h_{\cdot -\ell +j}(\Lambda))\right)_{1\leq\ell ,j\leq m}\right) ,
    \end{equation*}
    where we use the evaluation $(h_{\cdot +\ell -j}(\Lambda))^a =h_{a+\ell -j}(\Lambda)$.
\end{lemma}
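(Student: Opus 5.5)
The plan is to reduce the statement to the classical Jacobi--Trudi identity by multilinearity, treating the Jacobi polynomials umbrally as linear functionals on powers. Write $p_{r}:=P^{(\kappa_1-m/2,\kappa_2-m/2)}_r$ and expand $p_{\tau_\ell+m-\ell}(x)=\sum_{a\geq 0}c_{\ell,a}x^a$, where $c_{\ell,a}$ depends only on the row index $\ell$ (through $\tau_\ell+m-\ell$) and vanishes for $a>\tau_\ell+m-\ell$. In the right-hand side, the $(\ell,j)$ entry is then, by the stated evaluation convention, $\sum_a c_{\ell,a}h_{a-\ell+j}(\Lambda)$ with $h_r:=0$ for $r<0$. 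The first step is to make this convention precise as the linear map $x^a\mapsto h_{a-\ell+j}(\Lambda)$ applied entrywise. I will also check that the indices match the determinant in \eqref{eq:def-HO-polynomials} (possibly after transposing or reversing rows and columns).

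The second step is to expand both sides by multilinearity in the rows. In the definition \eqref{eq:def-HO-polynomials}, row $\ell$ of the numerator is $(p_{\tau_\ell+m-\ell}(\lambda_j))_j=\sum_a c_{\ell,a}(\lambda_j^a)_j$. Hence
\begin{equation*}
P_{\tau}^{(\kappa_1,\kappa_2,2)}(\Lambda)=\sum_{a_1,\dots,a_m}\Big(\prod_{\ell}c_{\ell,a_\ell}\Big)\frac{\det\big((\lambda_j^{a_\ell})_{\ell,j}\big)}{\prod_{\ell<j}(\lambda_\ell-\lambda_j)} .
\end{equation*}
Likewise, the right-hand side equals $\sum_{a_1,\dots,a_m}\prod_\ell c_{\ell,a_\ell}\det\big((h_{a_\ell-\ell+j}(\Lambda))_{\ell,j}\big)$. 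It therefore suffices to prove, for every tuple $(a_1,\dots,a_m)\in\mathbb{N}^m$, not necessarily decreasing, that
\begin{equation*}
\frac{\det(\lambda_j^{a_\ell})}{\prod_{\ell<j}(\lambda_\ell-\lambda_j)}=\det\big(h_{a_\ell-m+j}(\Lambda)\big)_{\ell,j},
\end{equation*}
after aligning the index shift with the paper's convention. Here I set $a_\ell=\mu_\ell+m-\ell$.

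The third step proves this generalized Jacobi--Trudi identity for arbitrary exponent vectors. I would use Macdonald's proof \cite[\textsection I (3.4)]{macdonald1998symmetric}. Let $A=(\lambda_j^{m-k})$, and let $H=(h_{a_\ell-m+k})_{\ell,k}$ with column index $k$. The identity $\sum_k (-1)^{k} e_{k}\,h_{r-k}=\delta_{r0}$, for the elementary symmetric polynomials $e_k$ of the $\lambda$'s, yields the matrix factorisation $(\lambda_j^{a_\ell})=H\cdot E$. Here $E$ has entries $(-1)^{m-k}e^{(j)}_{m-k}$, the elementary symmetric polynomials in the variables with $\lambda_j$ removed, and $\det E$ equals the Vandermonde $\prod_{\ell<j}(\lambda_\ell-\lambda_j)$ up to a sign fixed by the case $a=(m-1,\dots,0)$. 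Nothing in this argument uses that $\mu$ is a partition, which is why the identity holds for all tuples. If some $a_\ell$ coincide, both sides vanish: the left because two rows are equal, the right because two rows of $H$ are equal.

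I expect the main obstacle to be bookkeeping rather than substance. The index conventions must line up: $h_{\cdot-\ell+j}$ in the statement against $a_\ell-m+j$, which will require reversing the column order and correspondingly handling the normalisation sign. I also need to check that the umbral evaluation really is linear and row-wise, so that the multilinearity exchange is legitimate.
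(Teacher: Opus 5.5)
Your core argument is the paper's. The key step is the factorisation $\tilde{H}_\alpha M = Q_\alpha$, obtained by extending $\sum_{i=1}^m h_{a-m+i}(\Lambda)(-1)^{m-i}e^{(j)}_{m-i}(\Lambda)=\lambda_j^a$ linearly in $a$, followed by taking determinants with $\det M=\prod_{\ell<j}(\lambda_\ell-\lambda_j)$. Your expansion into monomial tuples $(a_1,\dots,a_m)$ is an unnecessary detour, since the identity is already linear in the polynomial placed in each row and the paper applies it to $P_{\alpha_\ell}$ directly. It is harmless, though, and fixing the sign of $\det E$ via $a=(m-1,\dots,0)$ is fine.

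The step that fails is your plan to reconcile the indexing by reversing columns. In Step 2 you write the right-hand side as a combination of $\det(h_{a_\ell-\ell+j})$, but you then prove an identity for $\det(h_{a_\ell-m+j})$. These are different: for $m=2$ and $a=(1,0)$ they equal $h_1(\Lambda)$ and $1$ respectively. The discrepancy is a row-dependent shift by $m-\ell$, so no column permutation or sign change can bridge it.

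In fact the lemma as printed is false. Take $m=2$, $\tau=(0,0)$ and $P^{(\kappa_1-1,\kappa_2-1)}_1(x)=c_1x+c_0$. The left side is $c_1$, while the printed right side is $c_1h_1(\Lambda)+c_0$. Reading the evaluation clause literally as $x^a\mapsto h_{a+\ell-j}$ gives $c_0$, which also differs from $c_1$ in general.

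The correct statement uses the evaluation $x^a\mapsto h_{a-m+j}(\Lambda)$. This is what the paper's own proof establishes, since its $\tilde{H}_\alpha$ is built with $h_{\cdot-m+j}$, and on leading terms it gives the classical Jacobi--Trudi entries $h_{\tau_\ell-\ell+j}$. State explicitly that you are proving this corrected version, use $h_{a-m+j}$ from Step 2 onward, and drop the column reversal. With that change your proof is complete.
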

\begin{proof}
    Let
    \begin{equation*}
        e^{(j)}_r(\Lambda) =\sum_{\substack{\ell_1 <\dots <\ell_{r-1} \\ \ell_1 ,\dots ,\ell_{r-1}\neq j}}\lambda_{\ell_1}\cdots\lambda_{\ell_{r-1}}
    \end{equation*}
    be the elementary symmetric polynomial of degree $r$ without the $j^{\textrm{th}}$ variable.
    Define
    \begin{equation*}
        Q_{\alpha} :=\left( P_{\alpha_{\ell} }^{(\kappa_1 -m/2 ,\kappa_2 -m/2)}(\lambda_j)\right)_{1\leq\ell ,j\leq m},\quad
        \tilde{H}_{\alpha} :=\left( P_{\alpha_{\ell} }^{(\kappa_1 -m/2 ,\kappa_2 -m/2)} (h_{\cdot -m+j}(\Lambda))\right)_{1\leq\ell ,j\leq m}
    \end{equation*}
    and
    \begin{equation*}
        M:=\left( (-1)^{m-\ell }e_{m-\ell }^{(j)}\right)_{1\leq\ell ,j\leq m} .
    \end{equation*}

    The components of \cite[Theorem (3.6)]{macdonald1998symmetric} are given by
    \begin{equation*}
        \sum_{i=1}^m h_{a-m+i}(\Lambda)(-1)^{m-i}e^{(j)}_{m-i}(\Lambda ) =\lambda_j^{a} .
    \end{equation*}
    We then have
    \begin{equation}\label{eq:matrix-Jacobi-Trudy}
        \tilde{H}_{\alpha}M =Q_{\alpha} .
    \end{equation}
    Using that $\det (M)=\prod_{1\leq\ell <j\leq m}(\lambda_{\ell}-\lambda_{j})$ and taking determinants in \eqref{eq:matrix-Jacobi-Trudy} gives
    \begin{equation*}
        \det{ (\tilde{H}_{\alpha})} \prod_{1\leq\ell <j\leq m}(\lambda_{\ell}-\lambda_{j}) =\det (Q_{\alpha})
    \end{equation*}
    and the lemma follows by taking $\alpha_{\ell} =\tau_{\ell} +m-\ell$ for $1\leq\ell\leq m$ and dividing by $M$.
\end{proof}

\begin{proposition}\label{prop:kulikauskas-formula}
    For a partition $\tau =(\tau_1 ,\dots ,\tau_m)$ we have
    \begin{equation*}
        P_{\tau}^{(\kappa_1 ,\kappa_2 ,2)}(\Lambda) =\frac{\det\left(\left( P_{\tau_{\ell} +m-\ell}^{(\kappa_1 -m/2 ,\kappa_2 -m/2)} (\Lambda )\right)_{jj}\right)_{1\leq\ell ,j\leq m}}{\det\left( (\Lambda^{m-\ell})_{jj}\right)_{1\leq\ell ,j\leq m}} .
    \end{equation*}
\end{proposition}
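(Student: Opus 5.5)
The formula says that Kulikauskas' proof of \eqref{eq:Schur-polynomials-matrix-coefficients} carries over verbatim once the monomials $\lambda^{a}$ are replaced by the one-variable Jacobi polynomials $P_a:=P^{(\kappa_1-m/2,\kappa_2-m/2)}_a$. The plan is to reduce the statement to the case of the Schur polynomials and then use multilinearity in the rows.

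First, both numerator and denominator are linear combinations of determinants of the same shape, since $P_a(\Lambda)=\sum_{i\le a}c_{a,i}\Lambda^i$ is a polynomial in $\Lambda$. Expanding row $\ell$ of the numerator by linearity gives
\begin{equation*}
\det\left((P_{\alpha_\ell}(\Lambda))_{jj}\right)_{\ell,j}=\sum_{i_1,\dots,i_m}\Big(\prod_{\ell=1}^m c_{\alpha_\ell,i_\ell}\Big)\det\left((\Lambda^{i_\ell})_{jj}\right)_{\ell,j},
\end{equation*}
with $\alpha_\ell=\tau_\ell+m-\ell$. Similarly, expanding the numerator of \eqref{eq:def-HO-polynomials} gives
\begin{equation*}
\det(P_{\alpha_\ell}(\lambda_j))=\sum_{i_1,\dots,i_m}\Big(\prod_{\ell=1}^m c_{\alpha_\ell,i_\ell}\Big)\det(\lambda_j^{i_\ell}).
\end{equation*}
The expansion coefficients are identical in the two cases.

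It therefore suffices to prove the term-wise identity
\begin{equation*}
\frac{\det((\Lambda^{i_\ell})_{jj})}{\det((\Lambda^{m-\ell})_{jj})}=\frac{\det(\lambda_j^{i_\ell})}{\prod_{\ell<j}(\lambda_\ell-\lambda_j)}
\end{equation*}
for every multi-index $(i_1,\dots,i_m)$ of nonnegative integers, not just strictly decreasing ones. If two of the $i_\ell$ coincide, both sides vanish because two rows are equal. Otherwise, sorting the rows introduces the same sign on both sides. After sorting, $i_\ell=\beta_\ell+m-\ell$ for a partition $\beta$, and the identity is exactly \eqref{eq:Schur-polynomials-matrix-coefficients}, i.e. \cite[Theorem 4.1.1.]{kulikauskas1993symmetric} applied to $s_\beta$. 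Summing over the multi-indices with the common coefficients, and dividing by the common denominator, yields the proposition. Alternatively, one could rerun Kulikauskas' argument directly by combining Lemma \ref{lemma:Jacobi-Trudi-for-HO-polynomials} with his expression of $(\Lambda^a)_{jj}$ through the $h_r$. The reduction to the Schur case is cleaner, so I will use that.

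The only delicate point is that \eqref{eq:Schur-polynomials-matrix-coefficients} is a ratio, so the denominator $\det((\Lambda^{m-\ell})_{jj})$ must not vanish identically. This is part of Kulikauskas' theorem, since it is the case $\tau=0$, where the ratio equals $s_0=1$. The identity is then an identity of rational functions on $\mathrm{Her}(m)$. It holds wherever the denominator is nonzero, which is a dense set, and hence as claimed.
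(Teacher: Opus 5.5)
Your proof is correct, but it takes a different route from the paper's.

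\textbf{The paper's route.} It first proves a Jacobi--Trudi identity for $P_\tau^{(\kappa_1,\kappa_2,2)}$ (Lemma~\ref{lemma:Jacobi-Trudi-for-HO-polynomials}). It then reuses an intermediate step of Kulikauskas' proof, namely the expression of $(\Lambda^a)_{jj}$ through the $h_r(\Lambda)$ and his auxiliary signed-cycle functions $\tilde{e}^{(j)}_r(\Lambda)$. This gives a matrix factorization $\tilde{H}_\alpha\tilde{M}=((P_{\alpha_\ell}(\Lambda))_{jj})_{\ell,j}$, together with $\det\tilde{M}=\det((\Lambda^{m-\ell})_{jj})$. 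Taking determinants yields the identity in product form.

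\textbf{Your route.} You use only the final statement \eqref{eq:Schur-polynomials-matrix-coefficients} as a black box. You transfer it term by term through multilinearity in the rows, with $P_a=P_a^{(\kappa_1-m/2,\kappa_2-m/2)}$. Repeated exponents contribute zero, and unsorted exponents are handled by the alternating property.

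\textbf{Comparison.} Your argument is more economical. It needs neither the Jacobi--Trudi lemma nor Kulikauskas' functions $\tilde{e}^{(j)}_r$, which the paper leaves undefined. It also uses nothing about the Jacobi polynomials beyond their being polynomials in one variable. So it proves the same formula for any family of one-variable polynomials in place of $P_a$, which covers the generalisation mentioned in the remark following the proposition at no extra cost. The paper's route, in exchange, produces the Jacobi--Trudi determinant in the $h_r$ as a byproduct. It also exhibits the divisibility of the numerator by $\det((\Lambda^{m-\ell})_{jj})$ structurally, as a matrix factorization.

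\textbf{A small point on the denominator.} Your argument that $\det((\Lambda^{m-\ell})_{jj})$ is not identically zero appeals to the case $\tau=0$. That relies on reading nonvanishing into the statement of Kulikauskas' theorem. A direct check is that for diagonal $\Lambda$ this determinant is the Vandermonde determinant. Alternatively, work throughout with the product form
$\det((P_{\alpha_\ell}(\Lambda))_{jj})=P_\tau^{(\kappa_1,\kappa_2,2)}(\Lambda)\det((\Lambda^{m-\ell})_{jj})$. This is a polynomial identity valid for every $\Lambda$, and your term-by-term argument proves it directly from the product form of Kulikauskas' identity.
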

\begin{proof}
    Let $h_r$ denote the complete symmetric polynomial of degree $r$.
    The first result of the proof of \cite[Theorem 4.1.1.]{kulikauskas1993symmetric} we will use is
    \begin{equation}\label{eq:Kulikauskas-equation}
        \sum_{i=1}^m h_{a-m+i}(\Lambda)(-1)^{m-i}\tilde{e}^{(j)}_{m-i}(\Lambda ) =(\Lambda^{a})_{jj} ,
    \end{equation}
    where $\tilde{e}^{(j)}_{r}$ is the generating function of boxes of cycles with sign that employ $r$ edges but do not employ the letter $j$.
    The precise definition of $\tilde{e}^{(j)}_{r}$ is not that important for us, just that there exists some functions satisfying the above equation, so we will not further elaborate on the precise definition of $\tilde{e}^{(j)}_{r}$.
    Define the matrices $Q_{\alpha}$ and $\tilde{H}_{\alpha}$ as in the proof of the previous lemma and the matrix
    \begin{equation*}
        \tilde{M}:=\left( (-1)^{m-\ell }\tilde{e}_{m-\ell }^{(j)}(\Lambda)\right)_{1\leq\ell ,j\leq m} .
    \end{equation*}
    The relation \eqref{eq:Kulikauskas-equation} readily gives
    \begin{equation*}
        \tilde{H}_{\alpha}\tilde{M} =Q_{\alpha} .
    \end{equation*}
    Taking the determinant, using that
    \begin{equation*}
        \det (\tilde{M})=\det\left( (\Lambda^{m-\ell})_{jj}\right)_{1\leq\ell ,j\leq m}
    \end{equation*}
    as also shown in the proof of \cite[Theorem 4.1.1.]{kulikauskas1993symmetric}, and lemma \ref{lemma:Jacobi-Trudi-for-HO-polynomials} we see
    \begin{equation*}
        P_{\tau}^{(\kappa_1,\kappa_2 ,2)}(\Lambda)\cdot\det\left( (\Lambda^{m-\ell})_{jj}\right)_{1\leq\ell ,j\leq m}=
        \det\left(\left( P_{\tau_{\ell} +m-\ell}^{(\kappa_1 -m/2 ,\kappa_2 -m/2)} (\Lambda )\right)_{jj}\right)_{1\leq\ell ,j\leq m} .
    \end{equation*}
    Note in particular that this means that $\det\left(\left( P_{\tau_{\ell} +m-\ell}^{(\kappa_1 -m/2 ,\kappa_2 -m/2)} (\Lambda )\right)_{jj}\right)_{1\leq\ell ,j\leq m}$ is divisible by $\det\left( (\Lambda^{m-\ell})_{jj}\right)_{1\leq\ell ,j\leq m}$ as a polynomial in the coefficients of $\Lambda$.
    After rearranging, we can conclude the theorem.
\end{proof}

\begin{remark}
    The discussion here, with the exception of the symmetric linear operator, is easily extended to to the more general class of symmetric polynomials treated in \cite{BALDERRAMA2005486}.
\end{remark}

\section{Invariant polynomials of Hermitian matrix arguments}\label{chap:invariant-polynomials}
The viewpoint of symmetric polynomials as polynomials of the matrix coefficients invariant under conjugation by unitary matrices is particularly interesting because we extend it to multiple Hermitian matrix variables.
In this section we will describe this extension.
The main novel result in this section is a matrix version of Koornwinders method.

\begin{definition}
    A \emph{multivariate invariant polynomial of Hermitian matrix arguments} is a polynomial $P:\mathrm{Her}(m)^{k}\rightarrow\mathbb{C}$ in the coefficients of the matrices which is invariant under \emph{simultaneous conjugation with unitary matrices}, i.e.
    \begin{align*}
        P(\Lambda_1,\dots ,\Lambda_{k})\mapsto P(U\Lambda_1 U^*,\dots ,U\Lambda_{k}U^*)
    \end{align*}
    for all $U\in\mathbf{U}(m)$.
    We will denote the vector space spanned by all such polynomials which are homogeneous of total degree $r_1,\dots ,r_k$, in the coefficients of the matrices\footnote{Note that the total degree is invariant under simultaneous conjugation with unitary matrices, since it is a linear transformation in the coordinates of the matrix.} $\Lambda_1,\dots ,\Lambda_k$ respectively, by $\mathbf{P}_{r_1,\dots ,r_k}(\mathrm{Her}(m)^k)^{\mathbf{U}(m)}$.
\end{definition}

\subsection{Simultaneous invariants and generators}
This extension is non-trivial, because non-commuting matrices $\Lambda_1,\dots ,\Lambda_{k}$ are not simultaneously diagnolisable and we are therefore not able to write these polynomials in terms of the eigenvalues of these matrices in general.
We will however be able to write them in terms of the \emph{simultaneous invariants} of the $k$ matrices $\Lambda_1,\dots ,\Lambda_k$.
Unfortunately, these simultaneous invariant are rather elusive.
The following examples are essential in our understanding of the simultaneous invariants.

\begin{example}
    A large class of examples of multivariate invariant polynomials of Hermitian matrix arguments are given by
    \begin{equation}\label{eq:products-of-traces}
        \prod_{j=1}^q\mathrm{Tr}(\Lambda_1^{a_{11j}}\Lambda_2^{a_{21j}}\cdots \Lambda_{k}^{a_{k1j}}\cdots \Lambda_1^{a_{1pj}}\Lambda_2^{a_{2pj}} \cdots\Lambda_{k}^{a_{kpj}})
    \end{equation}
    for some $a_{ij\ell}\in\mathbb{N}$ with $1\leq i\leq k$, $1\leq j\leq p$ and $1\leq\ell\leq q$.
    Note that these polynomials have degree $r_1,\dots ,r_k$ in the coefficients of the matrices $\Lambda_1,\dots ,\Lambda_k$ respectively, where $r_i :=\sum_{j=1}^p\sum_{\ell =1}^qa_{ij\ell}$ for $1\leq i\leq k$.

    The simplest non-trivial case is given by
    \begin{equation*}
        P(A,B) :=\mathrm{Tr}(AB)=\sum_{p,q=1}^m A_{pq}B_{qp}=\sum_{p,q=1}^m A_{pq}\overline{B}_{pq} .
    \end{equation*}
\end{example}

Finding generators of spaces like $\mathbf{P}_{r_1,\dots ,r_k}(\mathrm{Her}(m)^k)^{\mathbf{U}(m)}$ has quite a long history and goes back to at least \cite{ARTIN1969532}.
Using polarisation of homogeneous polynomials and the fact that simultaneous invariance by unitary matrices is the same as simultaneous invariance by invertible matrices, we can deduce the following theorem, see \cite[Theorem 11.2. (a)]{PROCESI1976306}.

\begin{theorem}\label{thm:generators-polynomial-Hermitian-matrix-arguments}
    The space $\mathbf{P}_{r_1,\dots ,r_k}(\mathrm{Her}(m)^k)^{\mathbf{U}(m)}$ is spanned by the polynomials of the form \eqref{eq:products-of-traces}.
    In other words the simultaneous invariant are all determined by the trace operator.
\end{theorem}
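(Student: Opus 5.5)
The plan is to reduce the statement to the first fundamental theorem of invariant theory for $Gl_m(\mathbb{C})$ acting on tuples of matrices by simultaneous conjugation, and to pass between Hermitian and general complex matrices by complexification.

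\emph{Step 1: from $\mathbf{U}(m)$ to $Gl_m(\mathbb{C})$.} A polynomial $P$ on $\mathrm{Her}(m)^k$ in the real coordinates of the matrices extends uniquely to a holomorphic polynomial on $(\mathbb{C}^{m\times m})^k=(\mathrm{Her}(m)\otimes\mathbb{C})^k$. The extension is obtained by writing each Hermitian matrix $\Lambda$ in terms of its entries $\Lambda_{ab}$, which become independent complex coordinates $X_{ab}$. Conjugation $X\mapsto UXU^*=UXU^{-1}$ is complex linear in $X$, and it preserves $\mathrm{Her}(m)$ for $U\in\mathbf{U}(m)$. Hence invariance of $P$ under $\mathbf{U}(m)$ on $\mathrm{Her}(m)^k$ implies invariance of the extension under $\mathbf{U}(m)$ acting by $X\mapsto UXU^{-1}$ on complex matrices, since $\mathrm{Her}(m)^k$ is a totally real subspace of maximal dimension. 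Consider, for fixed $X$, the function $g\mapsto P(gX_1g^{-1},\dots ,gX_kg^{-1})-P(X_1,\dots ,X_k)$. It is holomorphic on $Gl_m(\mathbb{C})$ and vanishes on $\mathbf{U}(m)$, which is a totally real form of $Gl_m(\mathbb{C})$. By the identity theorem it therefore vanishes identically, so $P$ is $Gl_m(\mathbb{C})$-invariant. This is the ``unitary trick'' alluded to before the statement.

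\emph{Step 2: polarisation to the multilinear case.} Since the action preserves each multidegree, we may take $P$ homogeneous of degree $(r_1,\dots ,r_k)$. Full polarisation gives a multilinear invariant $\tilde P(Y_1,\dots ,Y_r)$ with $r=r_1+\dots +r_k$, linear in each $Y_i\in\mathbb{C}^{m\times m}$. We recover $P$ by restitution: set the first $r_1$ arguments equal to $X_1$, the next $r_2$ equal to $X_2$, and so on, and divide by $r_1!\cdots r_k!$. Multilinear invariants of $\mathfrak{gl}_m=V\otimes V^*$ correspond to $Gl(V)$-invariant vectors in $(V^*\otimes V)^{\otimes r}$, that is, to elements of $\mathrm{End}_{Gl(V)}(V^{\otimes r})$.

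\emph{Step 3: the multilinear first fundamental theorem.} By Schur--Weyl duality, $\mathrm{End}_{Gl(V)}(V^{\otimes r})$ is spanned by the images of permutations $\sigma\in S_r$. Under the identification of Step 2, the invariant attached to $\sigma$ is
\begin{equation*}
    (Y_1,\dots ,Y_r)\mapsto\prod_{\text{cycles }(i_1\,\dots\, i_s)\text{ of }\sigma}\mathrm{Tr}(Y_{i_1}\cdots Y_{i_s}) .
\end{equation*}
Restituting these spanning multilinear invariants yields exactly products of traces of words in $X_1,\dots ,X_k$ with the prescribed multidegree. Every such word can be written in the form \eqref{eq:products-of-traces} by grouping consecutive letters, allowing zero exponents. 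Restricting back to $\mathrm{Her}(m)^k$ gives the claim. This is \cite[Theorem 11.2 (a)]{PROCESI1976306}, which we will cite rather than reprove.

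I expect the main obstacle to be Step 3, namely identifying multilinear invariants with permutations via Schur--Weyl duality and double centraliser theory, together with the bookkeeping that matches cycles of $\sigma$ to trace monomials. Steps 1 and 2 are routine, but the identity-theorem argument in Step 1 needs care: the extended $P$ is holomorphic in the entries of $X$, which are independent of the entries of $X^*$.
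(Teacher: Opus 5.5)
Your proposal is correct and follows the paper's route. The paper obtains the theorem by the same three steps: replace $\mathbf{U}(m)$-invariance by $Gl_m(\mathbb{C})$-invariance, polarise homogeneous invariants, and invoke Procesi's first fundamental theorem (his Theorem 11.2(a)). Your complexification and identity-theorem argument and your Schur--Weyl sketch of the multilinear case simply spell out what the paper leaves to that citation.
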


Note that this means that we can extend polynomials in $\mathbf{P}_{r_1,\dots ,r_k}(\mathrm{Her}(m)^k)^{\mathbf{U}(m)}$ from $\mathrm{Her}(m)^k$ to $(\mathbb{C}^{m\times m})^k$ in a canonical way and this extension is still invariant under simultaneous conjugation with unitary matrices.

\begin{remark}\label{rmk:generators-polynomial-Hermitian-matrix-arguments}
    It can be shown that the relations among polynomials of the form \eqref{eq:products-of-traces} are all a consequence of the fact that the trace is invariant under cyclic permutations and the Cayley--Hamilton theorem, see \cite[\textsection 4]{PROCESI1976306}.
\end{remark}

While these results give a rather complete picture of the invariants, it still seems difficult to explicitly write down a basis or a minimal set of generators of $\mathbf{P}(\mathrm{Her} (m)^{k})^{\mathbf{U}(m)}$ in practice.
However, a finite set of generators is given by
\begin{equation*}
    \mathrm{Tr}(\Lambda_{j_1}\cdots\Lambda_{j_{\ell}})\textrm{ with } \ell\leq 2^m-1,
\end{equation*}
see \cite[Theorem 3.4. (a)]{PROCESI1976306}.
This shows that we need at most $(2^m-1)^{k+1}$ generators.
Note that when $k=1$ we only need $m$ generators, so this is far from a minimal set of generators.

\subsection{Bases and multivariate invariant orthogonal polynomials}
We will now describe a basis of $\mathbf{P}(\mathrm{Her}(m)^k)^{\mathbf{U}(m)}$ using representation theory.
This basis is indexed by an index set $\Theta_k$, which we will use to define multivariate invariant orthogonal polynomials.

This basis is a complex analogue of the \emph{invariant polynomials with matrix arguments}, see \cite{Davis1981, Chikuse1986}.
See also \cite{nagar2005generalized, Ratnarajah2005, JoseA2011} for more properties of these polynomials.
Recall that the Schur polynomials are up to a scalar the zonal polynomials of the Gelfand pair $(Gl_m(\mathbb{C}),\mathbf{U}(m))$.

\begin{definition}\label{def:multivariate-Schur-polynomials}
    The \emph{multivariate Schur polynomials} $(s_{\theta})_{\theta\in\Theta_k}$ are scalar multiples of the \lq zonal polynomials\rq\ of the pair $(Gl_m(\mathbb{C})^k,\Delta (\mathbf{U}(m)))$, where
    \begin{equation*}
        \Delta (\mathbf{U}(m)):=\{(U,\dots ,U)\in Gl_m(\mathbb{C})^k\mid U\in\mathbf{U}(m)\} .
    \end{equation*}
\end{definition}

Note that the pair $(Gl_m(\mathbb{C})^k,\Delta (\mathbf{U}(m)))$ is not always a Gelfand pair when $k>1$.
We will now briefly describe what we mean by a zonal polynomial in this case.
Recall the decomposition \eqref{eq:single-variable-irreducible-decomposition} of $\mathbf{P}_{r_j}(\mathrm{Her}(m))$ into irreducible subrepresentations with respect to the action of $Gl_m(\mathbb{C})$ induced by the congruence transform.
This gives
\begin{equation*}
    \mathbf{P}_{r_1,\dots ,r_k}(\mathrm{Her}(m)^k) 
    =\bigotimes_{j=1}^k\mathbf{P}_{r_j}(\mathrm{Her}(m))
    =\bigoplus_{\substack{\tau_1\vdash r_1,\dots ,\tau_k\vdash r_k}}(V_{\tau_1}\otimes \overline{V}_{\tau_1}\otimes\cdots\otimes V_{\tau_k}\otimes\overline{V}_{\tau_k}) .
\end{equation*}

Let $\tau_1\vdash r_1,\dots ,\tau_k\vdash r_k$.
$Gl_m(\mathbb{C})$ induces a representation on the Kronecker product $V_{\tau_1}\otimes\cdots\otimes V_{\tau_k}$ by the simultaneous congruence transform $(\Lambda_1,\dots ,\Lambda_k)\mapsto (g\Lambda_1 g^*,\dots ,g\Lambda_k g^*)$.
We can therefore decompose $V_{\tau_1}\otimes\cdots\otimes V_{\tau_k}$ into a direct sum of irreducible representations $V^{\tau_1,\dots ,\tau_k}_{\phi}\cong V_{\phi}$ of $Gl_m(\mathbb{C})$ indexed by partitions $\phi$ with weight $r_1+\cdots +r_k$.
A partition $\phi$ can appear with higher multiplicity in this decomposition.
In fact the multiplicity of $\phi$ is given by $c^{\phi}_{\tau_1,\dots ,\tau_k}$, where $c^{\phi}_{\tau_1,\dots ,\tau_k}$ are the coefficients appearing in the expansion
\begin{equation*}
    s_{\tau_1}(\Lambda )\cdots s_{\tau_k}(\Lambda)=\sum_{\phi\vdash |\tau_1|+\dots +|\tau_k |}c^{\phi}_{\tau_1,\dots ,\tau_k} s_{\phi}(\Lambda) .
\end{equation*}
For $k=2$, these are called the \emph{Littlewood--Richardson coefficients} \cite[\textsection 12.5.1]{Procesi2006-am}, \cite{Collins2020}.
These can be used to inductively obtain the multiplicities for $k>2$.
Similarly one can decompose $\overline{V}_{\tau_1}\otimes\cdots\otimes \overline{V}_{\tau_k}$ into $\overline{V}^{\tau_1,\dots ,\tau_k}_{\psi}$.

We then obtain the direct sum decomposition
\begin{equation*}
    V_{\tau_1}\otimes \overline{V}_{\tau_1}\otimes\cdots\otimes V_{\tau_k}\otimes\overline{V}_{\tau_k} =\bigoplus_{\substack{\phi ,\psi\vdash |\tau_1|+\dots +|\tau_k|}} V^{\tau_1,\dots ,\tau_k}_{\phi}\otimes\overline{V}^{\tau_1,\dots ,\tau_k}_{\psi}
\end{equation*}
The representation $V^{\tau_1,\dots ,\tau_k}_{\phi}\otimes\overline{V}^{\tau_1,\dots ,\tau_k}_{\psi}$ is irreducible, and contains a one-dimensional subspace $S_{\phi}^{\tau_1,\dots ,\tau_k}$ invariant under conjugation by unitary matrices if and only if $\phi =\psi$ \cite[VII (5.7)]{macdonald1998symmetric}.
Note that subspaces invariant under the action restricted to $\mathbf{U}(m)$ are at most one-dimensional since $(Gl_m(\mathbb{C}),\mathbf{U}(m))$ is a Gelfand pair.

We will use the notation $\phi\in (\tau_1,\dots ,\tau_k)$ to mean that $\phi$ appears in the decomposition of $V_{\tau_1}\otimes\cdots\otimes V_{\tau_k}$ into irreducible representations.
Recall that this implies $\phi\vdash |\tau_1|+\dots +|\tau_k|$.
As noted before, such a partition $\phi$ might appear with multiplicity greater than one for a given $\tau_1,\dots ,\tau_k$, which means that $S^{\tau_1,\dots ,\tau_k}_{\phi}$ is not always uniquely defined.
We will distinguish the partitions corresponding to equivalent irreducible representations with different symbols, call them \emph{equivalent partitions}, and use the notation $\phi\equiv\phi'$ if $\phi$ and $\phi'$ are equivalent partitions for a fixed $k$-tuple of partitions.
However, the direct sum of the invariant subspaces over equivalent partitions
\begin{equation}\label{eq:equivalent-partions}
    \mathcal{S}_{\phi}^{\tau_1,\dots ,\tau_k} :=\bigoplus_{\phi'\equiv\phi} S_{\phi'}^{^{\tau_1,\dots ,\tau_k}}
\end{equation}
is uniquely defined and has dimension $(c_{\tau_1,\dots ,\tau_k}^{\phi})^2$, see \cite{Harris2014} for similar decompositions.

We define
\begin{equation*}
    \Theta_k :=\{(\tau_1,\dots ,\tau_k ;\phi )\in\mathcal{P}^{k+1}\mid\phi\in (\tau_1,\dots ,\tau_k )\} ,
\end{equation*}
where we have to include the partitions $\phi$ with the right multiplicity.
To indices $(\tau_1, \dots ,\tau_k ;\phi)$, $(\rho_1,\dots ,\rho_k ;\psi)$ are called \emph{equivalent} if $\tau_1=\rho_1,\dots ,\tau_k =\rho_k$ and $\phi\equiv\psi$.
We will equip this index set with the \lq \emph{multigrading}\rq\
\begin{equation*}
    |(\tau_1,\dots ,\tau_k ;\phi )| :=(|\tau_1|,\dots ,|\tau_k|) ,
\end{equation*}
and $\mathbb{N}^k$ with the lexicographical ordering.

The multivariate Schur polynomials are defined as a suitably normalised basis $(s_{\phi}^{\tau_1,\dots ,\tau_k})_{(\tau_1,\dots ,\tau_k ;\phi )\in\Theta_k}$ of $\mathbf{P}(\mathrm{Her}(m)^k)^{\mathbf{U}(m)}$ such that $s_{\phi}^{\tau_1,\dots ,\tau_k}\in\mathcal{S}_{\phi}^{\tau_1,\dots ,\tau_k}$ for all $(\tau_1,\dots ,\tau_k ;\phi )\in\Theta_k$ and satisfying the fundamental relationship
\begin{equation}\label{eq:integral-property-multivariate-Schur}
    \int_{\mathbf{U}(m)}\prod_{j=1}^ks_{\tau_j}(A_jUB_jU^*)dU
    =\sum_{\phi\in (\tau_1,\dots ,\tau_k)}\frac{s_{\phi}^{\tau_1,\dots ,\tau_k}(A_1,\dots ,A_k)s_{\phi}^{\tau_1,\dots ,\tau_k}(B_1,\dots ,B_k)}{s_{\phi}(I_m)}
\end{equation}
for $\tau_1,\dots ,\tau_k\in\mathcal{P}$ and $A_j,B_j\in\mathrm{Her}(m)$ for $1\leq j\leq k$.
While these polynomials are not uniquely defined, they will suffice for our purposes.

The formula \eqref{eq:integral-property-multivariate-Schur} with $A_j =I_m$ for $1\leq j\leq k$ gives
\begin{equation}\label{eq:product-Schur-polynomials}
    \prod_{j=1}^k s_{\tau_j}(\Lambda_j) =\sum_{\phi\in (\tau_1,\dots,\tau_k)} \theta^{\tau_1,\dots ,\tau_k}_{\phi} s^{\tau_1,\dots ,\tau_k}_{\phi}(\Lambda_1,\dots ,\Lambda_k) ,
\end{equation}
where
\begin{equation*}
    \theta^{\tau_1,\dots ,\tau_k}_{\phi} :=\frac{s_{\phi}^{\tau_1,\dots ,\tau_k}(I_m,\dots ,I_m)}{s_{\phi}(I_m)}.
\end{equation*}
We also note that there are analogues of the first integral equation \eqref{eq:integral-formulas-Schur-functions} for multivariate Schur functions, see \cite[Equation (9)]{JoseA2011}, which we will record here for completeness
\begin{equation}\label{eq:integral-formula-multivariate-Schur}
        \int_{\mathbf{U}(m)}s_{\phi}^{\tau_1,\dots ,\tau_k}(A_1UB_1U^*,\dots ,A_kUB_kU^*)dU
        =\frac{s_{\phi}^{\tau_1,\dots ,\tau_k}(A_1,\dots ,A_k)s_{\phi}^{\tau_1,\dots ,\tau_k}(B_1,\dots ,B_k)}{s_{\phi}^{\tau_1,\dots ,\tau_k}(I_m,\dots ,I_m)} .
\end{equation}
We are not aware of analogues of the second formula.

The multivariate Schur polynomials are a graded basis.
Indeed, the multivariate Schur polynomial $s_{\tau_1,\dots ,\tau_k}^\phi(\Lambda_1,\dots ,\Lambda_k)$ has total degree $|\tau_1|,\dots ,|\tau_k|$ in the coefficients of the matrices $\Lambda_1,\dots ,\Lambda_k$ respectively since it is an element of $\mathbf{P}_{r_1,\dots ,r_k}(\mathrm{Her}(m)^k)$.

\begin{example}
    For $m=1$, we have $s_{\tau }(\lambda) =\lambda^{\tau}$ and $s_{\tau_1+\dots +\tau_k}^{\tau_1,\dots ,\tau_k}(\lambda_1,\dots ,\lambda_k) =\lambda_1^{\tau_1}\cdots\lambda_{k}^{\tau_k}$, since the tensor product is simply given by the product of the functions and all functions are invariant under (simultaneous) conjugation by unitary $1\times 1$ matrices.
\end{example}

To calculate multivariate Schur polynomials $s_{\phi}^{\tau_1,\dots ,\tau_k}$ explicitly when $m>1$ we can use the following lemma when $\phi$ is of multiplicity $1$, which is a complex analogue of \cite[(A.4.13)]{Chikuse2003-nh}.

\begin{lemma}[\textbf{Multinomial expansion}]\label{lemma:multinomial-expansion-Schur-function}
    Let $\phi$ be a partition, then
    \begin{equation*}
        s_{\phi}\left(\sum_{j=1}^k\Lambda_j\right) =\sum_{\tau_1,\dots ,\tau_k\in\mathcal{P};\phi\in (\tau_1,\dots ,\tau_k)}\sum_{\phi'\equiv\phi} c^{\tau_1,\dots ,\tau_k}_{\phi'} s^{\tau_1,\dots ,\tau_k}_{\phi'}(\Lambda_1,\dots ,\Lambda_k) .
    \end{equation*}
    for some constants $c^{\tau_1,\dots ,\tau_k}_{\phi}$ depending on the normalisation of the multivariate Schur polynomials.
\end{lemma}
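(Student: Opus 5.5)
The plan is to view $F(\Lambda_1,\dots ,\Lambda_k):=s_{\phi}(\Lambda_1+\dots +\Lambda_k)$ as an element of $\mathbf{P}(\mathrm{Her}(m)^k)^{\mathbf{U}(m)}$ and locate it inside the decomposition of this space into the subspaces $\mathcal{S}^{\tau_1,\dots ,\tau_k}_{\psi}$. Invariance is immediate: $s_\phi$ is invariant under conjugation and $U(\sum_j\Lambda_j)U^*=\sum_j U\Lambda_jU^*$. Since $s_\phi$ is homogeneous of degree $|\phi|$, expanding the sum splits $F$ into multihomogeneous components $F_{r_1,\dots ,r_k}\in\mathbf{P}_{r_1,\dots ,r_k}(\mathrm{Her}(m)^k)^{\mathbf{U}(m)}$ with $r_1+\dots +r_k=|\phi|$. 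Each component is still invariant, because simultaneous conjugation preserves the multidegree.

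The central step is representation theoretic. Let $Gl_m(\mathbb{C})$ act diagonally by simultaneous congruence. The map $\iota:\mathbf{P}_{|\phi|}(\mathrm{Her}(m))\to\mathbf{P}(\mathrm{Her}(m)^k)$, $f\mapsto f(\Lambda_1+\dots +\Lambda_k)$, intertwines this action with the congruence action on a single matrix, since $g(\sum\Lambda_j)g^*=\sum g\Lambda_jg^*$. The same holds after projecting onto a fixed multidegree $(r_1,\dots ,r_k)$, as that projection also commutes with the action. By \eqref{eq:single-variable-irreducible-decomposition}, $s_\phi$ lies in the irreducible summand $V_\phi\otimes\overline{V}_\phi$, which is isomorphic to $V_\phi\otimes\overline{V}_\phi$ as a representation of $Gl_m(\mathbb{C})\times Gl_m(\mathbb{C})$. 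More concretely, the holomorphic and antiholomorphic parts transform separately: $g$ acts on $V$ and $\bar g$ on $\overline{V}$.

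Under Schur's lemma, the image $\iota_{r_1,\dots ,r_k}(V_\phi\otimes\overline{V}_\phi)$ must therefore lie in the isotypic component of type $V_\phi\otimes\overline{V}_\phi$ inside
\[
\bigoplus_{\tau_j\vdash r_j}\bigoplus_{\psi,\psi'}V^{\tau_1,\dots ,\tau_k}_{\psi}\otimes\overline{V}^{\tau_1,\dots ,\tau_k}_{\psi'} .
\]
That component is $\bigoplus_{\tau_j\vdash r_j,\ \phi\in(\tau_1,\dots ,\tau_k)}\ \bigoplus_{\psi\equiv\phi,\ \psi'\equiv\phi}V_\psi\otimes\overline{V}_{\psi'}$. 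Taking $\mathbf{U}(m)$-invariants, as in the discussion preceding \eqref{eq:equivalent-partions}, gives the subspace $\bigoplus_{\tau}\mathcal{S}^{\tau_1,\dots ,\tau_k}_\phi$. The multivariate Schur polynomials $s^{\tau_1,\dots ,\tau_k}_{\phi'}$ with $\phi'\equiv\phi$ form a basis of $\mathcal{S}^{\tau_1,\dots ,\tau_k}_\phi$. Expanding $F_{r_1,\dots ,r_k}$ in this basis and summing over the multidegrees then yields the claimed formula, with constants $c^{\tau_1,\dots ,\tau_k}_{\phi'}$ depending only on the chosen normalisation.

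I expect the main obstacle to be justifying the Schur's lemma step rigorously. The diagonal $Gl_m(\mathbb{C})$ action alone does not separate $V_\psi\otimes\overline{V}_{\psi'}$ for different pairs $(\psi,\psi')$ as cleanly as the action of the pair $(g,\bar h)$ would. To fix this, I would complexify the Hermitian variables, treating $\Lambda$ and $\overline{\Lambda}$, or equivalently row and column indices, as independent. This gives an action of $Gl_m(\mathbb{C})\times Gl_m(\mathbb{C})$ via $\Lambda\mapsto g\Lambda h^{T}$, under which $\iota$ remains equivariant, so the isotypic argument becomes clean. A less structural fallback is to write $s_\phi(\sum_j\Lambda_j)$ through \eqref{eq:integral-formulas-Schur-functions} and \eqref{eq:integral-property-multivariate-Schur}. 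For multiplicity one one could also use the analogue of \cite[(A.4.13)]{Chikuse2003-nh} directly, checking that only components $\phi'\equiv\phi$ survive.
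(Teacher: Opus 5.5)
Everything up to the last step is sound: invariance, the splitting into multihomogeneous parts, and the fact that $\iota_{r_1,\dots,r_k}$ is an intertwiner. The last step fails as soon as $c:=c^{\phi}_{\tau_1,\dots,\tau_k}\geq 2$ for some $\tau$.

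The isotypic component you display has an invariant line in every $V^{\tau_1,\dots,\tau_k}_{\psi}\otimes\overline{V}^{\tau_1,\dots,\tau_k}_{\psi'}$ with $\psi\equiv\psi'\equiv\phi$, including the pairs with $\psi\neq\psi'$. So its $\mathbf{U}(m)$-invariants form a space of dimension $c^2$, which is the dimension the paper itself records for $\mathcal{S}^{\tau_1,\dots,\tau_k}_\phi$. There are only $c$ polynomials $s^{\tau_1,\dots,\tau_k}_{\phi'}$ with $\phi'\equiv\phi$, so they cannot be a basis of it.

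Schur's lemma, for the diagonal $Gl_m(\mathbb{C})$ or for $Gl_m(\mathbb{C})\times Gl_m(\mathbb{C})$ after complexifying, only says that $\iota_{r_1,\dots,r_k}$ on $V_\phi\otimes\overline{V}_\phi$ is given by \emph{some} vector in the multiplicity space $\mathbb{C}^c\otimes\overline{\mathbb{C}^c}\cong\mathrm{End}(\mathbb{C}^c)$. Equivariance does not constrain which vector, so nothing forces $F_{r_1,\dots,r_k}$ into the $c$-dimensional span of the diagonal invariants. The obstacle you anticipate is not the real one. Non-equivalent pairs $(\psi,\psi')$ are already separated by the real group $Gl_m(\mathbb{C})$, because $\{(g,\bar g)\}$ is Zariski dense in $Gl_m(\mathbb{C})\times Gl_m(\mathbb{C})$. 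No complexification can separate the $c^2$ mutually isomorphic pieces. As written, your proof covers only the multiplicity-free case.

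What is missing is the fact that the multiplicity vector is the identity, i.e.\ that $F_{r_1,\dots,r_k}$ is a trace over the whole isotypic component. The paper gets this from the defining relation \eqref{eq:integral-property-multivariate-Schur}:
\begin{itemize}
\item it expands $\int_{\mathbf{U}(m)}e^{\mathrm{Tr}(\sum_j\Lambda_jUAU^*)}dU$ once with \eqref{eq:integral-formulas-Schur-functions} and once with \eqref{eq:integral-property-multivariate-Schur};
\item it applies Schur's lemma to the diagonal restriction $Q\mapsto Q(A,\dots,A)$, which is the Fischer adjoint of your $\iota$, so that $s^{\tau_1,\dots,\tau_k}_{\phi'}(A,\dots,A)$ is a multiple of $s_\phi(A)$;
\item it compares coefficients of $s_\phi(A)$.
\end{itemize}
Only the diagonal products $s^{\tau_1,\dots,\tau_k}_{\phi'}(\Lambda_1,\dots,\Lambda_k)\,s^{\tau_1,\dots,\tau_k}_{\phi'}(A,\dots,A)$ occur, so the result lands in the right span automatically.

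To keep your direct route, you can supply the step with Schur--Weyl duality. Write $s_\phi(X)=\mathrm{tr}(e_\phi X^{\otimes|\phi|})/f^\phi$, where $e_\phi$ is the central idempotent of $\mathbb{C}S_{|\phi|}$ and $f^\phi$ the dimension of the corresponding Specht module. By centrality, the multidegree-$(r_1,\dots,r_k)$ part of $s_\phi(\sum_j\Lambda_j)$ is a multiple of $\mathrm{tr}(e_\phi\Lambda_1^{\otimes r_1}\otimes\cdots\otimes\Lambda_k^{\otimes r_k})$. This in turn is a positive combination over $\tau_j\vdash r_j$ of $\mathrm{tr}(P^{\tau}_{\phi}\,\pi_{\tau_1}(\Lambda_1)\otimes\cdots\otimes\pi_{\tau_k}(\Lambda_k))$. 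Here $\pi_\tau$ is the polynomial representation with character $s_\tau$, and $P^\tau_\phi$ is the projection onto the $V_\phi$-isotypic component of $V_{\tau_1}\otimes\cdots\otimes V_{\tau_k}$.

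For an orthogonal decomposition into copies of $V_\phi$, this trace is the sum of the diagonal invariants. It therefore lies in the span of the $s^{\tau_1,\dots,\tau_k}_{\phi'}$, provided the decomposition of $\overline{V}_{\tau_1}\otimes\cdots\otimes\overline{V}_{\tau_k}$ is chosen conjugate to that of $V_{\tau_1}\otimes\cdots\otimes V_{\tau_k}$.
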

\begin{proof}
    On the one hand we have
    \begin{align*}
        \int_{\mathbf{U}(m)}e^{\mathrm{Tr}\left(\sum_{j=1}^k\Lambda_j UAU\right)}dU
        =&\sum_{\ell =1}^{\infty}\frac{1}{\ell !}\int_{\mathbf{U}(m)}\mathrm{Tr}\left(\sum_{j=1}^{k}\Lambda_j UA U^*\right)^{\ell} \\
        =&\sum_{\ell =1}^{\infty}\sum_{\phi\vdash\ell}\frac{K_{\phi ,(1^{\ell})}}{\ell !} \int_{\mathbf{U}(m)}s_{\phi}\left(\sum_{j=1}^{k}\Lambda_j UAU^*\right) dU\\
        =&\sum_{\ell =1}^{\infty}\sum_{\phi\vdash\ell}\frac{K_{\tau ,(1^{\ell})}}{\ell !}\frac{1}{s_{\phi}(I_m)} s_{\phi}\left(\sum_{j=1}^{k}\Lambda_j \right) s_{\phi}(A),
    \end{align*}
    see \cite[\textsection I proof of (7.6)]{macdonald1998symmetric} for the first equality.
    Note that the Kostka number $K_{\tau ,(1^{\ell})}\geq 1$ for $|\tau|\vdash\ell$, since $K_{\tau ,\rho}$ is the number of tableaux of shape $\tau$ and weight $\rho$ \cite[\textsection I (6.4)]{macdonald1998symmetric}.
    On the other hand, by equation \eqref{eq:integral-property-multivariate-Schur}, we have
    \begin{multline*}
        \int_{\mathbf{U}(m)}\prod_{j=1}^ke^{\mathrm{Tr}(\Lambda_jUAU^*)}dU
        =\sum_{\ell_1,\dots ,\ell_k =0}^{\infty} \int_{\mathbf{U}(m)}\prod_{j=1}^k\frac{\mathrm{Tr}(\Lambda_jUAU^*)^{\ell_j}}{\ell_1!\dots\ell_k!} dU\\
        =\sum_{\ell_1,\dots ,\ell_k =0}^{\infty} \sum_{\tau_1\vdash\ell_1,\dots ,\tau_k\vdash\ell_k} \int_{\mathbf{U}(m)}\prod_{j=1}^k\frac{\tilde{c}_{\tau_j}}{\ell_j!}s_{\tau_j}(\Lambda_jUAU^*) dU\\
        =\sum_{\ell_1,\dots ,\ell_k =0}^{\infty}\sum_{\phi\in (\tau_1,\dots ,\tau_k)}\frac{\tilde{c}_{\tau_1}\dots \tilde{c}_{\tau_k}}{\ell_1!\dots\ell_k!}\frac{1}{s_{\phi}(I_m)}s_{\phi}^{\tau_1,\dots ,\tau_k} (\Lambda_1,\dots ,\Lambda_k) s_{\phi}^{\tau_1,\dots ,\tau_k}(A,\dots ,A)
    \end{multline*}
    for some constants $\tilde{c}_{\tau_j}$.
    Note that $A\mapsto s_{\phi'}^{\tau_1,\dots ,\tau_k}(A,\dots ,A)$ is invariant under conjugation by unitary matrices and an element of $ V_{\phi}\otimes\overline{V}_{\phi}$ by Schur's lemma, since the map $p:\mathbf{P}_{r_1,\dots ,r_k}(\mathrm{Her}(m)^k)\rightarrow\mathbf{P}_{r_1+\dots +r_k}(\mathrm{Her}(m))$ given by $p(Q)(\Lambda) :=Q(\Lambda ,\dots ,\Lambda )$ is an $Gl_m(\mathbb{C})$-intertwiner.
    It is therefore equal to a non-zero scalar multiple of $s_{\phi}$ for $\phi'\equiv\phi$. 
    One now obtains the statement of the lemma by comparing the coefficients of $s_{\phi}$.
\end{proof}

\begin{example}\label{example:multivariate-Shur-low-degree}
    We use the techniques described in \cite{Davis1981} to obtain the multivariate Schur polynomials in two Hermitian matrix variables of total degree $(1,1)$ for $m=2$.
    By the Littlewood--Richardson rule the only spaces appearing in the decomposition of $V_{(1)}\otimes V_{(1)}$ are $V_{(2)}^{(1),(1)}$ and $V_{(1,1)}^{(1),(1)}$ both with multiplicity one.
    Furthermore,
    \begin{equation*}
        s_{(1,1)}(\Lambda )=\det (\Lambda )=\tfrac{1}{2}(\mathrm{Tr}(\Lambda)^2 -\mathrm{Tr}(\Lambda^2 ))
    \end{equation*}
    and therefore
    \begin{equation*}
        s_{(1,1)}(\Lambda_1 +\Lambda_2 )=s_{(1,1)}(\Lambda_1) +s_{(1,1)}(\Lambda_2) +\mathrm{Tr}(\Lambda_1)\mathrm{Tr}(\Lambda_2) -\mathrm{Tr}(\Lambda_1\Lambda_2 ).
    \end{equation*}
    Therefore, up to scalar
    \begin{equation*}
        s_{(1,1)}^{(1),(1)}(\Lambda_1 ,\Lambda_2 )=\tfrac{1}{2}(\mathrm{Tr}(\Lambda_1)\mathrm{Tr}(\Lambda_2) -\mathrm{Tr}(\Lambda_1\Lambda_2))
    \end{equation*}
    and similarly
    \begin{equation*}
        s_{(2)}^{(1),(1)}(\Lambda_1 ,\Lambda_2 )=\tfrac{1}{2}(\mathrm{Tr}(\Lambda_1)\mathrm{Tr}(\Lambda_2) +\mathrm{Tr}(\Lambda_1\Lambda_2)) ,
    \end{equation*}
    using $s_{(2)}(\Lambda) =\tfrac{1}{2}(\mathrm{Tr}(\Lambda)^2 +\mathrm{Tr}(\Lambda^2 ))$.
\end{example}

The next lemma allows us to define the absolute degree of a polynomial in $\mathbf{P}(\mathrm{Her}(m)^k)^{\mathbf{U}(m)}$.

\begin{lemma}\label{lemma:reducing-variables}
    Let $1\leq\ell <k$ and $1\leq j_1<\dots <j_{\ell}\leq k$.
    Let $\mu$ be a measure on $\mathrm{Her}(m)^{k-\ell}$ invariant under simultaneous conjugation by unitary matrices and $P\in\mathbf{P}_{r_1,\dots ,r_k}(\mathrm{Her}(m)^k)^{\mathbf{U}(m)}$.
    The polynomial
    \begin{equation*}
        P_j(\Lambda_{j_1},\dots ,\Lambda_{j_{\ell}}):=\int P(\Lambda_1,\dots ,\Lambda_k)d\mu(\Lambda_1,\dots ,\Lambda_{j_1-1},\Lambda_{j_1+1},\dots ,\Lambda_{j_{\ell}-1}, \Lambda_{j_{\ell}+1},\dots ,\Lambda_k)
    \end{equation*}
    lies in $\mathbf{P}_{r_{j_1},\dots ,r_{j_{\ell}}}(\mathrm{Her}(m)^{\ell})^{\mathbf{U}(m)}$.
\end{lemma}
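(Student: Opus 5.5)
We need to show that integrating out the variables not in $\{j_1,\dots,j_\ell\}$ yields a polynomial in the remaining variables that is homogeneous of degrees $r_{j_1},\dots,r_{j_\ell}$ and invariant under simultaneous conjugation. We implicitly assume that $\mu$ has finite moments of all orders (e.g. compact support, as in subsection \ref{subsec:invariant-orthogonal-polynomials}), so that all integrals below converge. We split the argument into three steps: polynomiality with the right multidegree, invariance, and then a remark on the identification with the multigraded space.

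\emph{Step 1: polynomiality and degree.} Write $P$ as a finite sum of monomials in the matrix coefficients. Each monomial factors as
\begin{equation*}
    M_1(\Lambda_{j_1},\dots ,\Lambda_{j_\ell})\, M_2(\text{remaining }\Lambda_i),
\end{equation*}
where $M_1$ is homogeneous of degree $r_{j_i}$ in $\Lambda_{j_i}$ for each $i$, because $P$ lies in $\mathbf{P}_{r_1,\dots ,r_k}$. Integrating term by term gives
\begin{equation*}
    P_j=\sum M_1\int M_2\,d\mu ,
\end{equation*}
and the finiteness of moments ensures each $\int M_2\,d\mu$ is a finite constant. Hence $P_j$ is a polynomial of multidegree exactly $(r_{j_1},\dots ,r_{j_\ell})$. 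Some coefficients may vanish, or $P_j$ may even be zero, but zero belongs to the space, so this causes no difficulty.

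\emph{Step 2: invariance.} Fix $U\in\mathbf{U}(m)$. Evaluate $P_j(U\Lambda_{j_1}U^*,\dots ,U\Lambda_{j_\ell}U^*)$ and use the invariance of $P$ under simultaneous conjugation by $U^*$ to move the conjugation onto the integrated variables: the integrand becomes $P$ evaluated with each remaining variable $\Lambda_i$ replaced by $U^*\Lambda_iU$. The change of variables $\Lambda_i\mapsto U\Lambda_iU^*$ preserves $\mu$, since $\mu$ is invariant under simultaneous conjugation (this is the footnote definition of invariance, applied through pushforward of integrals of simple functions). The integral is therefore unchanged, which shows $P_j(U\Lambda_{j_1}U^*,\dots)=P_j(\Lambda_{j_1},\dots)$.

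\emph{Step 3.} Steps 1 and 2 together place $P_j$ in $\mathbf{P}_{r_{j_1},\dots ,r_{j_\ell}}(\mathrm{Her}(m)^{\ell})^{\mathbf{U}(m)}$. The only delicate point is justifying the change of variables and exchanging the finite sum with the integral, which needs the integrability assumption above. Beyond that the argument is routine, so no real obstacle is expected.
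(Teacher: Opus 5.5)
Your proof is correct and follows the paper's own argument: invariance comes from shifting the conjugation onto the integrated variables using the invariance of $P$, then undoing it with the invariance of $\mu$, and the degree claim (which the paper calls trivial) follows by integrating the monomials term by term. Your explicit finite-moment assumption on $\mu$ is a sensible addition, since the paper needs it but leaves it implicit.
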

\begin{proof}
    For notational convenience we will assume that $j_v=v$ for $1\leq v\leq\ell$, which we can always achieve after a permutation of the variables.
    Let $U\in\mathbf{U}(m)$, the change of variables $\Lambda_{v}\mapsto U^*\Lambda_{v}U$ for $\ell\leq v\leq k$ gives
    \begin{equation*}
        P_{1,\dots ,\ell}(\Lambda_1,\dots ,\Lambda_{\ell}) =\int_{\mathrm{Her}(m)^{k-\ell}} P(\Lambda_1 ,\dots ,\Lambda_{\ell} ,U\Lambda_{\ell +1}U^* ,\dots ,U\Lambda_k U^*)d\mu(\Lambda_{\ell+1},\dots ,\Lambda_k)
    \end{equation*}
    for $\Lambda_1,\dots \Lambda_{\ell}\in\mathrm{Her}(m)$, see \cite[Theorem 3.5]{mathai1997jacobians} for the Jacobian.
    From this we see that $P_{1,\dots ,\ell}$ is invariant under simultaneous conjugation by unitary matrices and thus an element of $\mathbf{P}(\mathrm{Her}(m)^{\ell})^{\mathbf{U}(m)}$.
    The claim about the total degrees is trivial.
\end{proof}

Note that if $\ell =1$, the polynomial $P_{j_1}$ is a symmetric polynomial and its degree is always bounded by $(r_{j_1},0,\dots ,0)$ in the reverse lexicographical ordering.

\begin{definition}
    Let $P\in\mathbf{P}(\mathrm{Her}(m)^k)^{\mathbf{U}(m)}$, the \emph{absolute degree} of $P$ is given by the tuple of partitions $(\tau_1,\dots ,\tau_k;\phi )$, where
    \begin{equation*}
        \tau_j :=\sup_{\mu}\left\{\mathrm{abs\, deg}\int P(\Lambda_1,\dots ,\Lambda_k)d\mu(\Lambda_1,\dots ,\Lambda_{j-1},\Lambda_{j+1},\dots ,\Lambda_k)\right\} ,
    \end{equation*}
    where the supremum is taken with respect to the graded reverse lexicographical ordering and over all probability measures $\mu$ on $\mathrm{Her}(m)^{k-1}$ invariant under simultaneous conjugation by unitary matrices.
\end{definition}

\begin{example}\label{ex:absolute-degree-multivariate-Schur-polynomials}
    Let $\mu$ be a measure on $\mathrm{Her}(m)^{k-1}$ invariant under simultaneous conjugation by unitary matrices, than
    \begin{equation*}
        Q_{\mu}(\Lambda_j) :=\int_{\mathrm{Her}(m)^{k-1}}s_{\phi}^{\tau_1,\dots ,\tau_k}(\Lambda_1,\dots ,\Lambda_k) d\mu (\Lambda_1,\dots ,\Lambda_{j-1},\Lambda_{j+1},\dots ,\Lambda_k)
    \end{equation*}
    lies in $V_{\tau_j}$, since $s_{\phi}^{\tau_1,\dots ,\tau_k}\in \mathcal{S}_{\phi}^{\tau_1,\dots ,\tau_k}\subseteq V_{\tau_1}\otimes\dots\otimes V_{\tau_k}$ by definition.
    Since $Q_{\mu}$ is invariant under conjugation by lemma \ref{lemma:reducing-variables} it has to be a multiple of $s_{\tau_j}$.
    If $Q_{\mu}=0$ for all such $\mu$, than one can show that $s_{\phi}^{\tau_1,\dots ,\tau_k}=0$ by approximating the uniform/Haar measure on the orbit of $\mathbf{U}(m)$ on $(\Lambda_1,\dots ,\Lambda_{j-1},\Lambda_{j+1},\dots ,\Lambda_k)$ under simultaneous conjugation.
    This would contradict the definition of the multivariate Schur polynomials.
    We see that the absolute degree of $s_{\phi}^{\tau_1,\dots ,\tau_k}$ is $(\tau_1 ,\dots ,\tau_k)$.
\end{example}

One extension of the \emph{natural ordering} to the index set $\Theta_k$ is the partial ordering given by
\begin{equation*}
    \theta <\theta' \Leftrightarrow \begin{cases}\tau_{\ell} =\rho_{\ell}\textrm{ for all }j<\ell\leq k \textrm{ and }\tau_{j} <\rho_{j}\textrm{ for some } 1\leq j\leq m\\
    \textrm{or }\tau_{\ell} =\rho_{\ell}\textrm{ for all }1\leq\ell\leq k\textrm{ and } \phi <\phi' .
    \end{cases}
\end{equation*}
The natural ordering is just a combination of the natural ordering on the sets of partitions combined with some lexicographical ordering on the different partitions.
Note that in this partial ordering the indices $(\tau_1,\dots ,\tau_k ;\phi )$ with different, but equivalent partitions $\phi$ are incomparable.
Furthermore, if $(\tau_1,\dots ,\tau_k ;\phi ) >(\tau_1,\dots ,\tau_k ;\psi )$, than $(\tau_1,\dots ,\tau_k ;\phi ) >(\tau_1,\dots ,\tau_k ;\psi' )$ for all $\psi'\equiv\psi$.
This means that the notion of triangularity with respect to the multivariate Schur polynomials only depends on the specific choice of multivariate Schur polynomials in the leading coefficient.
It is quite arbitrary to define a total ordering on $\Theta_k$ as there is in general no reasonable way to separate the equivalent $\phi$'s.

We will now briefly describe the corresponding orthogonal polynomials, see also \cite[\textsection 5.2.4]{hoshino2021pioneering}.
A system of invariant polynomials $(P_{\theta})_{\theta\in\Theta_k}$ of multiple matrix arguments are called \emph{orthogonal} with respect to a compactly supported weight $\mu$, invariant under simultaneous conjugation with unitary matrices, if $P_{\theta}(\Lambda_1,\dots ,\Lambda_k)$ has total degree $|\tau_j|$ in the coefficients of $\Lambda_j$ for $1\leq j\leq k$ with $\theta =(\tau_1 ,\dots ,\tau_k ;\phi )$ and furthermore
\begin{equation*}
    \int P_{\theta}(X_1,\dots ,X_k)P_{\theta'}(X_1,\dots ,X_k)\mu (dX_1,\dots ,dX_k)
    =\delta_{\theta\theta'}
    h_{\theta}
\end{equation*}
for $\theta ,\theta'\in\Theta_k$.
By a dimension argument, a system of invariant polynomials $(P_{\theta})_{\theta\in\Theta_k}$ forms a basis of $\mathbf{P}(\mathrm{Her}(m)^k)^{\mathbf{U}(m)}$. 

To deal with the non-uniqueness we strengthen the notion of triangularity for operators in such a way that it does not depend on the choice of multivariate Schur polynomials.
More precisely, that the leading coefficients will only depend on the \emph{isotypic components}.

\begin{definition}
    Let $(b_{\tau_1,\dots ,\tau_k ;\phi })_{(\tau_1,\dots ,\tau_k ;\phi )\in\Theta_k}$ be a graded basis of $\mathbf{P}(\mathrm{Her}(m)^k)^{\mathbf{U}(m)}$.
    A linear operator $\mathcal{L} :\mathbf{P}(\mathrm{Her}(m)^k)^{\mathbf{U}(m)}\rightarrow \mathbf{P}(\mathrm{Her}(m)^k)^{\mathbf{U}(m)}$ is called \emph{strongly triangular} with respect to $(b_{\tau_1,\dots ,\tau_k ;\phi })_{(\tau_1,\dots ,\tau_k ;\phi )\in\Theta_k}$ if
    \begin{equation*}
        \mathcal{L}b_{\tau_1,\dots ,\tau_k ;\phi}=\sum_{(\rho_1,\dots ,\rho_k ;\psi )\leq (\tau_1,\dots ,\tau_k ;\phi)} a_{\tau_1,\dots ,\tau_k ;\phi}^{\rho_1,\dots ,\rho_k ;\psi} b_{\rho_1,\dots ,\rho_k ;\psi}
    \end{equation*}
    and $a_{\tau_1,\dots ,\tau_k ;\phi}^{\tau_1,\dots ,\tau_k ;\phi} = a_{\tau_1,\dots ,\tau_k ;\phi'}^{\tau_1,\dots ,\tau_k ;\phi'}$ whenever $\phi\equiv\phi'$.
\end{definition}

\subsection{Koornwinders method}
Let $P$ be an invariant polynomial of one Hermitian matrix variable, $B$ a Hermitian matrix and $A$ a positive Hermitian matrix.
Note that $A^{-\frac{1}{2}}BA^{-\frac{1}{2}}$ is again a Hermitian matrix and can thus be taken as argument of $P$.
Note that $A^{-\frac{1}{2}}BA^{-\frac{1}{2}}$ and $BA^{-1}$ have the same eigenvalues\footnote{This follows from the general fact that for any two matrices $A$ and $B$, $AB$ and $BA$ have the same eigenvalues.}, we can therefore extend the expression $P(A^{-\frac{1}{2}}BA^{-\frac{1}{2}})$ to invertible Hermitian matrices in $A$ by seeing it as the symmetric polynomial in the eigenvalues of $BA^{-1}$.
We use continuity and the fact that the invertible Hermitian matrices lie dense in the space of all Hermitian matrices to make sense of this expression for non-invertible Hermitian matrices, whenever possible.

\begin{lemma}\label{lemma:product-polynomials-Hermitian}
    Let $P$ be an invariant polynomial of one Hermitian matrix argument of absolute degree $\tau =(\tau_1,\dots ,\tau_m )$, then the function
    \begin{align*}
        Q(A,B) :=\det (A)^{\tau_1}P(A^{-\frac{1}{2}}BA^{-\frac{1}{2}}) 
        =\det (A)^{\tau_1}P(BA^{-1})
    \end{align*}
    is an element of $P_{m\tau_1 -|\tau|,|\tau|}(\mathrm{Her}(m)^2)^{\mathbf{U}(m)}$.
\end{lemma}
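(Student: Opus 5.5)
The plan is to realise the eigenvalues of $BA^{-1}$ as the roots of the one–variable polynomial $t\mapsto\det(tA-B)$, and then to use the classical fact that a symmetric function of the roots, multiplied by a suitable power of the leading coefficient, is a polynomial in all the coefficients. Throughout I take $P$ homogeneous of total degree $|\tau|$. The bidegree claim $(m\tau_1-|\tau|,|\tau|)$ can only hold in that case; for general $P$ one applies the argument to each homogeneous component and obtains polynomiality with the corresponding bidegrees.

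\emph{Step 1 (degree bound in each variable).} Expand $P$ in the monomial symmetric polynomials, $P=\sum_\lambda c_\lambda m_\lambda$. Since $P$ has absolute degree $\tau$, every $\lambda$ with $c_\lambda\neq0$ satisfies $\lambda\leq_{\mathrm{rlg}}\tau$, and $|\lambda|=|\tau|$ by homogeneity. The reverse lexicographic order first compares the first parts, so $\lambda_1\leq\tau_1$. Hence $P$ has degree at most $\tau_1$ in each eigenvalue separately.

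\emph{Step 2 (rewriting in elementary symmetric polynomials of bounded length).} Write each $m_\lambda$ in the basis $e_\nu$. By the standard triangularity between $m$ and $e$ (via Kostka numbers), $m_\lambda$ is a combination of $e_{\mu'}$ with $\mu\leq\lambda$ in the natural order. In particular $\mu_1\leq\lambda_1$, and $\mu_1$ is the number of factors of $e_{\mu'}$. Therefore $P=\sum_\nu d_\nu e_\nu$, where each $e_\nu$ is a product of at most $\tau_1$ factors $e_j$ with $1\leq j\leq m$.

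\emph{Step 3 (polynomiality).} Let $x_1,\dots,x_m$ be the eigenvalues of $X=BA^{-1}$ for invertible $A$. They are the roots of
\[
f(t)=\det(tA-B)=\det(A)\det(tI-BA^{-1})=\sum_{j=0}^m (-1)^j a_j(A,B)\,t^{m-j},
\]
where $a_0=\det A$ and $e_j(x)=a_j/a_0$. Expanding the determinant shows that each $a_j$ is a polynomial in the coefficients of $A$ and $B$, homogeneous of bidegree $(m-j,j)$. Substituting into Step 2,
\[
\det(A)^{\tau_1}P(X)=\sum_\nu d_\nu\,\det(A)^{\tau_1-\ell(\nu)}\prod_i a_{\nu_i}(A,B),
\]
and $\ell(\nu)\leq\tau_1$, so every term is a polynomial. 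This identity holds on the dense set of invertible $A$. The right-hand side is a polynomial defined everywhere, so it gives the continuous extension used in the paper's definition of $Q$.

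\emph{Step 4 (bidegree and invariance).} Each term has bidegree $\bigl(m(\tau_1-\ell(\nu))+\sum_i(m-\nu_i),\ \sum_i\nu_i\bigr)=(m\tau_1-|\tau|,|\tau|)$, using $|\nu|=|\tau|$. Invariance under $(A,B)\mapsto(UAU^*,UBU^*)$ is immediate, because $\det A$ and the spectrum of $BA^{-1}$ are unchanged. The main point needing care is Step 2, namely that bounding the degree in each variable by $\tau_1$ bounds the number of elementary factors by $\tau_1$. I would justify this through the dominance-triangularity $m_\lambda\in\mathrm{span}\{e_{\mu'}:\mu\leq\lambda\}$, i.e.\ the transpose of the Kostka-number expansion of $e_{\lambda'}$ in the $m_\mu$, as in \cite[\textsection I 6.]{macdonald1998symmetric}.
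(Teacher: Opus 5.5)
Your proof is correct for homogeneous $P$, which the bidegree claim presupposes, and it takes a genuinely different route from the paper.

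\textbf{How the paper argues.} It reduces to $P=s_\rho$ with $\rho_1\le r:=\tau_1$ and uses the two Schur integral identities \eqref{eq:integral-formulas-Schur-functions}. Averaging $Q(A,UBU^*)$ over $\mathbf{U}(m)$ separates the variables into $\det(A)^r s_\rho(A^{-1})s_\rho(B)/s_\rho(I_m)$, which is polynomial because $r\ge\rho_1$. The orthogonality relation, twisted by $\det(U)^r$, then recovers $Q$ as $s_\rho(I_m)^2\int_{\mathbf{U}(m)}\det(U)^r\tilde Q(U^*A,BU^*)\,dU$, an average of polynomials.

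\textbf{How you argue.} You read the spectrum of $BA^{-1}$ off the pencil $\det(tA-B)$. You then replace the condition $\rho_1\le r$ by the statement that a symmetric polynomial of degree at most $\tau_1$ in each variable is a combination of products of at most $\tau_1$ elementary symmetric polynomials. Your justification via the dominance-triangular expansion of $m_\lambda$ in the $e_{\mu'}$ is sound.

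\textbf{What each route buys.} Yours is elementary and gives an explicit formula for $Q$ in terms of $\det A$ and the mixed coefficients $a_j(A,B)$. It defines $Q$ on all of $(\mathbb{C}^{m\times m})^2$ without a density argument and reads off the bidegree term by term. It also gives the stronger relative invariance $Q(gAg^*,gBg^*)=|\det g|^{2\tau_1}Q(A,B)$ for $g\in GL_m(\mathbb{C})$. The paper's mechanism does not rely on the pair being governed by a single characteristic polynomial. Given an analogue of the second integral formula for multivariate Schur polynomials, it would yield the conjectured exponent $\phi_1$ in Lemma \ref{lemma:product-polynomials-Hermitian-generalisation}. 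Your pencil argument has no direct counterpart for three or more matrices.

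\textbf{A correction to your opening remark.} Restricting to homogeneous $P$ is indeed necessary; the paper's reduction to $s_\rho$ with $\rho_1\le\tau_1$ uses this tacitly. However, applying the argument componentwise does not rescue an inhomogeneous $P$ with the single exponent $\tau_1$. The graded order compares weights first, so a component of lower weight may have a larger first part. For $m=3$ and $P=s_{(1,1,1)}+s_{(2)}$ the absolute degree is $(1,1,1)$. Yet with $B=I_m$ and $A=\mathrm{diag}(a_1,a_2,a_3)$, the function $\det(A)P(A^{-1})$ contains the term $a_2a_3/a_1$, so it is not a polynomial. For inhomogeneous $P$ the exponent must be the largest first part occurring in any component.
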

\begin{proof}
    We will first show that $Q$ is invariant under simultaneous conjugation by unitary matrices. 
    The relation $CA^{\frac{1}{2}}(CA^{\frac{1}{2}})^* =CAC^*$ for positive $A\in\mathrm{Her}(m)$ and $C\in\mathrm{GL}_m(\mathbb{C})$ gives $(CAC^*)^\frac{1}{2} =CA^{\frac{1}{2}}V$ for some $V\in\mathbf{U}(m)$ or written differently
    \begin{equation}\label{eq:polar-decomposition-conjugation}
        (CAC^*)^{-\frac{1}{2}}=V^*A^{-\frac{1}{2}}C^{-1}\textrm{ and }
        (CAC^*)^{-\frac{1}{2}}=(C^*)^{-1}A^{-\frac{1}{2}}V .
    \end{equation}
    These relations with $A=A$ and $C=U$ (in which case $V=U^*$) show that $Q$ is invariant under the transformation
    \begin{equation*}
        Q(A,B)\mapsto Q(UAU^*,UBU^*)
    \end{equation*}
    for $U\in\mathbf{U}(m)$.

    Since the Schur polynomials $(s_{\rho})_{\rho\in\mathcal{P}}$ form a basis of the symmetric polynomials it is enough to prove this for $P=s_{\rho}$ with $\rho_1\leq_{\mathrm{rlg}} r:=\tau_1$.
    Evaluating the second argument in $UBU^*$ and integrating over $U$ with respect to the Haar measure on $\mathbf{U}(m)$ gives
    \begin{align*}
        \tilde{Q}(A,B):=&\int_{\mathbf{U}(m)}Q(A,UBU^*)dU \\
        =&\det (A)^r\int_{\mathbf{U}(m)}s_{\rho}(A^{-1}UBU^*)dU 
        =\det (A)^{r}\frac{s_{\rho}(A^{-1})s_{\rho}(B)}{s_{\rho}(I_m)}
    \end{align*}
    by the first formula in equation \eqref{eq:integral-formulas-Schur-functions}.
    The function $\tilde{Q}$ is clearly a polynomial in two Hermitian matrix arguments since the eigenvalues of $A^{-1}$ are given by the reciprocals of those of $A$ and are raised to a power $\leq\rho_1\leq r$.
    More precisely, it is a product of two invariant polynomials with one Hermitian matrix argument and thus an invariant polynomial in the coefficients of these matrices.
    
    Now we use the fact that
    \begin{align*}
        \int_{\mathbf{U}(m)}\det (U)^r\tilde{Q}(U^*A,BU^*)dU =&\int_{\mathbf{U}(m)}\det (U)^r\det (U^*A)^{r}\frac{s_{\rho}(A^{-1}U)s_{\rho}(BU^*)}{s_{\rho}(I_m)}dU \\
        =&\frac{\det (A)^r}{s_{\rho}(I_m)} \int_{\mathbf{U}(m)}s_{\rho}(A^{-1}U)s_{\rho}(BU^*) dU\\
        =&\frac{\det (A)^r}{s_{\rho}(I_m)^2} s_{\rho}(A^{-1}B) 
        =\frac{Q(A,B)}{s_{\rho} (I_m)^2}
    \end{align*}
    to see that $Q$ is also a polynomial with two Hermitian matrix arguments, where we used that the determinant is multiplicative and the second formula in equation \eqref{eq:integral-formulas-Schur-functions}.
    Since it is a polynomial in the coefficients of the matrix it also follows that it can be extended to all Hermitian matrices by a density argument.
\end{proof}

\begin{remark}
    In \cite{huh2022logarithmic}, the following interesting identity is proved
    \begin{equation*}
        \det (A)^{r}s_{\tau}(A^{-1}) =s_{(r-\tau_m,\dots ,r-\tau_1)}(A).
    \end{equation*}
    for partitions $\tau$ and $r\geq\tau_1$.
\end{remark}

\begin{example}\label{example:simplest-example}
    We will look at the polynomial of multiple matrix arguments given by
    \begin{equation*}
        Q(A,B):=\det (A)\mathrm{Tr}(A^{-\frac{1}{2}}BA^{-\frac{1}{2}})
        =\det (A)s_{(1)}(BA^{-1}) .
    \end{equation*}
    This is indeed a polynomial of multiple matrix arguments, since the cofactor matrix $C:=\det (A)(A^{-1})^T$ is by Cramer's rule equal to the minor matrix up to sign.
    Therefore,
    \begin{equation*}
        Q(A,B)=\det (A)\sum_{p,q=1}^mB_{pq}(A^{-1})_{qp} =\sum_{p,q=1}^mB_{pq}C_{pq}
        =\sum_{p,q=1}^m(-1)^{p+q}B_{pq}\det (\hat{A}_{p,q}),
    \end{equation*}
    where $\hat{A}_{p,q}$ is the matrix $A$ without the $p^{\mathrm{th}}$ row and $q^{\mathrm{th}}$ column.

    For $m=2$ we have
    \begin{equation*}
        \det(B)\mathrm{Tr}(AB^{-1}) =A_{11}B_{22} -A_{12}B_{21} -A_{21}B_{12} +A_{22}B_{11} =\mathrm{Tr}(A)\mathrm{Tr}(B) -\mathrm{Tr}(AB) .
    \end{equation*}
    This is precisely $s^{(1),(1)}_{(1,1)}(A,B)$.
\end{example}

The technique used in example \ref{example:simplest-example} can be used to give a generalisation of lemma \ref{lemma:product-polynomials-Hermitian} to more variables.
The proof is more useful for calculations, but less abstract and therefore gives us less insight in the structure of invariant polynomials of multiple Hermitian variables.
It also gives a less optimal constant in the exponent of the determinant than lemma \ref{lemma:product-polynomials-Hermitian}.
The latter is related to the fact that the (absolute) degree of a power sum symmetric polynomial is of the form $(n,0,\dots ,0)$ and thus the worst possible.
It is likely that the constant in the exponent of the determinant can be improved to $\tau_{1,1}+\dots +\tau_{k,1}$ if $(\tau_1,\dots ,\tau_k)$ is the absolute degree of $Q$.
Furthermore, for polynomials triangular of degree $(\tau_1,\dots ,\tau_k ;\phi)$ with respect to the multivariate Schur polynomials, we conjecture that it can be improved to $\phi_1$.
This conjecture can easily be proved with an analogue of the second formula in equation \ref{eq:integral-formulas-Schur-functions} for multivariate Schur functions.

\begin{lemma}\label{lemma:product-polynomials-Hermitian-generalisation}
    Let $P\in\mathbf{P}_{r_2,\dots ,r_k}(\mathrm{Her}(m)^{k-1})^{\mathbf{U}(m)}$.
    The function
    \begin{align*}
        Q(\Lambda_1,\dots ,\Lambda_k ):=\det (\Lambda_1)^{r_2+\dots +r_k}P(\Lambda_1^{-\frac{1}{2}}\Lambda_2\Lambda_1^{-\frac{1}{2}} ,\dots ,\Lambda_1^{-\frac{1}{2}}\Lambda_k\Lambda_1^{-\frac{1}{2}} )
    \end{align*}
    is an element of $\mathbf{P}_{(m-1)(r_2+\dots +r_k),r_2,\dots ,r_k}(\mathrm{Her}(m)^{k})^{\mathbf{U}(m)}$.
\end{lemma}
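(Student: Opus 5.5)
By Theorem \ref{thm:generators-polynomial-Hermitian-matrix-arguments}, $\mathbf{P}_{r_2,\dots ,r_k}(\mathrm{Her}(m)^{k-1})^{\mathbf{U}(m)}$ is spanned by products of traces of the form \eqref{eq:products-of-traces}. Since the claimed map $P\mapsto Q$ is linear, it suffices to treat one such product. The approach follows Example \ref{example:simplest-example}: replace each inverse of $\Lambda_1$ by the cofactor matrix $C(\Lambda_1):=\det(\Lambda_1)\Lambda_1^{-1}$. Its entries are, up to sign, the $(m-1)\times(m-1)$ minors of $\Lambda_1$, so they are homogeneous polynomials of degree $m-1$ in the coefficients of $\Lambda_1$.

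\emph{Step 1.} Move to a form without square roots. Inside a trace, cyclicity gives
\begin{equation*}
    \mathrm{Tr}\bigl(\Lambda_1^{-\frac12}\Lambda_{j_1}\Lambda_1^{-\frac12}\Lambda_1^{-\frac12}\Lambda_{j_2}\Lambda_1^{-\frac12}\cdots \Lambda_1^{-\frac12}\Lambda_{j_s}\Lambda_1^{-\frac12}\bigr)=\mathrm{Tr}\bigl(\Lambda_{j_1}\Lambda_1^{-1}\Lambda_{j_2}\Lambda_1^{-1}\cdots\Lambda_{j_s}\Lambda_1^{-1}\bigr).
\end{equation*}
Here a power $\Lambda_i^{a}$ in \eqref{eq:products-of-traces} is written as a word of $a$ letters. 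A word of total length $s$ contains exactly $s$ factors $\Lambda_1^{-1}$, one for each letter. Summing over all trace factors of a product of traces of multidegree $(r_2,\dots ,r_k)$ gives exactly $r_2+\dots +r_k$ factors $\Lambda_1^{-1}$.

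\emph{Step 2.} Distribute the determinant. Write
\begin{equation*}
    \det(\Lambda_1)^{r_2+\dots +r_k}=\prod_{\text{letters}}\det(\Lambda_1),
\end{equation*}
and absorb one factor $\det(\Lambda_1)$ into each $\Lambda_1^{-1}$, turning it into $C(\Lambda_1)$. Then for positive definite $\Lambda_1$,
\begin{equation*}
    Q(\Lambda_1,\dots ,\Lambda_k)=\prod_{\ell}\mathrm{Tr}\bigl(\Lambda_{j_1}C(\Lambda_1)\Lambda_{j_2}C(\Lambda_1)\cdots\bigr).
\end{equation*}
The right-hand side is visibly a polynomial in the matrix coefficients. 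It has degree $(m-1)(r_2+\dots +r_k)$ in $\Lambda_1$ and degree $r_j$ in $\Lambda_j$ for $j\geq 2$. It is homogeneous in each variable because each entry of $C$ is homogeneous of degree $m-1$. This polynomial expression extends $Q$ to all of $\mathrm{Her}(m)^k$, in the same sense as the density convention used before Lemma \ref{lemma:product-polynomials-Hermitian}.

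\emph{Step 3.} Check invariance. Since $C(U\Lambda_1U^*)=UC(\Lambda_1)U^*$ for $U\in\mathbf{U}(m)$, every trace factor is invariant under simultaneous conjugation. Alternatively, invariance follows directly from \eqref{eq:polar-decomposition-conjugation}, as in the proof of Lemma \ref{lemma:product-polynomials-Hermitian}.

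The main obstacle is well-definedness. $Q$ is defined through $P$ itself, not through a chosen trace representation, and trace representations are not unique by Remark \ref{rmk:generators-polynomial-Hermitian-matrix-arguments}. I would handle this by working only on the dense open set of positive definite $\Lambda_1$. There $Q$ is an honest function of $P$, and each representation yields a polynomial agreeing with it on that set. Any two such polynomials therefore coincide everywhere by density, so the extension is unique and the statement follows.
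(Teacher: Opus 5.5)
Your proof is correct and follows the paper's argument: reduce by linearity to products of traces, use cyclicity to turn each $\Lambda_1^{-\frac12}\Lambda_j\Lambda_1^{-\frac12}$ into $\Lambda_j\Lambda_1^{-1}$, and absorb one factor $\det(\Lambda_1)$ into each $\Lambda_1^{-1}$ to obtain the adjugate (the paper writes this via the cofactor matrix $C^T$). Your count that there are \emph{exactly} $r_2+\dots+r_k$ such factors (the paper only writes $M\leq r_2+\dots+r_k$) is what makes the degree in $\Lambda_1$ come out as $(m-1)(r_2+\dots+r_k)$. One small correction to your last paragraph: positive definite matrices form an open but not dense subset of $\mathrm{Her}(m)$, so uniqueness of the polynomial extension should be justified by the fact that a polynomial vanishing on a nonempty open set is identically zero, not by density.
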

\begin{proof}
    Note that $Q$ is invariant under simultaneous conjugation by the same argument as in lemma \ref{lemma:product-polynomials-Hermitian}.
    By linearity it is enough to prove the claim for
    \begin{align*}
        P(\Lambda_2,\dots ,\Lambda_k):=\prod_{j=1}^q\mathrm{Tr}\left(\prod_{\ell =1}^p\Lambda_2^{a_{2\ell j}}\cdots \Lambda_{k}^{a_{k\ell j}}\right)
    \end{align*}
    with $M:=\sum_{i=2}^k\sum_{j=1}^{p}\sum_{\ell =1}^q a_{ij\ell }\leq r_2+\dots +r_k$.
    In this case
    \begin{align*}
        Q(\Lambda_1,\dots ,\Lambda_k ):=&\det (\Lambda_1)^{r_2+\dots +r_k}\prod_{j=1}^q\mathrm{Tr}\left(\prod_{\ell =1}^p(\Lambda_1^{-\frac{1}{2}}\Lambda_2\Lambda_1^{-\frac{1}{2}})^{a_{2\ell j}}\cdots (\Lambda_1^{-\frac{1}{2}}\Lambda_k\Lambda_1^{-\frac{1}{2}})^{a_{k\ell j}}\right) \\
        =&\det (\Lambda_1)^{r_2+\dots +r_k }\prod_{j=1}^q\mathrm{Tr}\left(\prod_{\ell =1}^p((\Lambda_2\Lambda_1^{-1})^{-a_{2\ell j}}\cdots (\Lambda_k\Lambda_1^{-1})^{-a_{k\ell j}})\right) \\
        =&\det (\Lambda_1)^{r_2+\dots +r_k-M}\prod_{j=1}^q\mathrm{Tr}\left(\prod_{\ell =1}^p((\Lambda_2 C^T)^{a_{2\ell j}}\cdots (\Lambda_k C^T)^{a_{k\ell j}})\right) ,
    \end{align*}
    where in the last equality we used the cofactor matrix $C:=\det (\Lambda_1 )(\Lambda_1^{-1})^T$.
    The result now follows from the fact that $C$ is equal to the minor matrix up to sign by Cramer's rule.
\end{proof}

\begin{remark}
    The proof of the above lemma also shows that we can extend $Q$ to non-positive, and even non-invertible Hermitian matrices $\Lambda_1$.
\end{remark}

\begin{remark}\label{rmk:k=2-polynomials}
Note that when $k=2$, we have
\begin{equation*}
    Q(\Lambda_1 ,F(\Lambda_1)\Lambda_2F(\Lambda_1))
    =Q(\Lambda_1 ,\Lambda_2F(\Lambda_1)^2)
\end{equation*}
for any $F:\mathbb{R}\rightarrow\mathbb{R}$ using the Borel functional calculus.
This is straightforward for polynomials of the form \eqref{eq:products-of-traces} since $F(\Lambda_1)$ and $\Lambda_1$ commute.
\end{remark}

The following proposition is a matrix variate version of Koornwinders method, see \cite[\textsection 3.7.2]{KOORNWINDER1975435} and \cite[Proposition 3.1]{Griffiths2011} for the scalar variate case.

\begin{proposition}\label{prop:naive-Koornwinders-method} 
    Let $(X,Y)$ be a $\mathrm{Her}(m)^{d}\times\mathrm{Her}(m)^k$-valued random variable with distribution $W$ invariant under simultaneous conjugation by unitary matrices and let $\rho :\mathbb{R}^d\rightarrow\mathbb{R}$ be a multivariate polynomial of order at most one such that $\rho (X)>0$ almost surely.
    Assume that the random variable
    \begin{equation*}
        Z:=(\rho (X))^{-\frac{1}{2}}Y(\rho (X))^{-\frac{1}{2}}
    \end{equation*}
    is independent of $X$ and denote the marginal distributions of $X$ and $Z$ by $W_X$ and $W_Z$ respectively.
    We will also assume that $W_X$ and $W_Z$ are independent under conjugation by unitary matrices.
    A system of invariant polynomials, pairwise orthogonal with respect to $W$, is given by
    \begin{equation*}
        G_{(\tau_1 ,\tau_2 )} (x,y) :=P_{\tau_1}^{M_{\tau_2}}(x)\det(\rho (x))^{M_{\tau_2}} R_{\tau_2 }\left( (\rho (x))^{-\frac{1}{2}} y(\rho (x))^{-\frac{1}{2}}\right)
    \end{equation*}
    for $(x,y)\in\mathrm{Her}(m)^d\times\mathrm{Her}(m)^k$ such that $\rho (x)>0$, $\tau_1\in\Theta_{d}$, $\tau_2\in\Theta_k$ where $(P_{k}^{j})_{k\in\Theta_d}$ and $(R_{\ell})_{\ell\in\Theta_k}$ are systems of orthogonal invariant polynomials of Hermitian matrix arguments with respect to the weights $\det(\rho (x))^{2j} W_X$ and $W_Z$ respectively and $M_{\ell}$ is the total degree of $R_{\ell}$.
    Furthermore, if $k=1$, the above statements hold with $M_{\ell} =\nu_1$ if $\nu$ is the absolute degree of $R_{\ell}$.
\end{proposition}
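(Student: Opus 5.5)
The plan has two parts: first show that each $G_{(\tau_1,\tau_2)}$ is an invariant polynomial, then prove pairwise orthogonality by a change of variables from $(X,Y)$ to $(X,Z)$ and independence.

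\textbf{Polynomiality and invariance.} The reading consistent with the proof strategy is that $\rho(x)$ is an affine combination of the $x_j$ (so $\rho(X)$ is a Hermitian matrix), and $\det(\rho(x))^{M_{\tau_2}}R_{\tau_2}(\rho(x)^{-1/2}y\rho(x)^{-1/2})$ is exactly of the form treated in lemma \ref{lemma:product-polynomials-Hermitian-generalisation}, with $\Lambda_1=\rho(x)$ and $\Lambda_{j+1}=y_j$. For $R_{\tau_2}$ not homogeneous, I would split it into homogeneous pieces of total degree at most $M_{\tau_2}$. Each piece is handled by the cofactor argument in that proof, which only needs the exponent of $\det$ to be at least the degree. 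Composing with the affine map $x\mapsto\rho(x)$ preserves polynomiality.

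Invariance under simultaneous conjugation holds because $\rho(UxU^*)=U\rho(x)U^*$, together with \eqref{eq:polar-decomposition-conjugation}, exactly as in lemma \ref{lemma:product-polynomials-Hermitian}. Multiplying by the invariant polynomial $P_{\tau_1}^{M_{\tau_2}}(x)$ keeps us in $\mathbf{P}(\mathrm{Her}(m)^{d+k})^{\mathbf{U}(m)}$. For $k=1$, the sharper exponent $M_\ell=\nu_1$ follows instead from lemma \ref{lemma:product-polynomials-Hermitian}, applied to each Schur component $s_\rho$ of $R_\ell$ with $\rho\leq_{\mathrm{rlg}}\nu$, so that $\rho_1\leq\nu_1$.

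\textbf{Orthogonality.} Write $\mathbb{E}$ for expectation under $W$, and note that $\rho(X)^{-1/2}Y\rho(X)^{-1/2}=Z$. Then
\begin{equation*}
\mathbb{E}[G_{(\tau_1,\tau_2)}G_{(\sigma_1,\sigma_2)}]=\mathbb{E}\big[P_{\tau_1}^{M_{\tau_2}}(X)P_{\sigma_1}^{M_{\sigma_2}}(X)\det(\rho(X))^{M_{\tau_2}+M_{\sigma_2}}\big]\;\mathbb{E}\big[R_{\tau_2}(Z)R_{\sigma_2}(Z)\big],
\end{equation*}
where the factorisation uses the independence of $X$ and $Z$. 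By the orthogonality of the $R$'s with respect to $W_Z$, the second factor vanishes unless $\tau_2=\sigma_2$. In that case $M_{\tau_2}=M_{\sigma_2}=:j$, so the weight in the first factor is $\det(\rho(x))^{2j}W_X$, and that factor vanishes unless $\tau_1=\sigma_1$.

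The integrals are finite because $\mu$ is compactly supported and $\rho(X)>0$ almost surely. The invariance assumptions on $W_X,W_Z$ are used only to make the families $P^{j}$ and $R$ well defined as invariant orthogonal systems.

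\textbf{Main obstacle.} The delicate step is the polynomiality of $G$ when $R_{\tau_2}$ has lower-degree terms and when $\rho(x)$ fails to be invertible. Lemma \ref{lemma:product-polynomials-Hermitian-generalisation} gives a polynomial identity on the dense set where $\rho(x)>0$, and it extends to all $x$ by continuity.

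I would also verify the exponent bookkeeping in the $k=1$ case: each Schur component of a non-homogeneous $R_\ell$ must have first part at most $\nu_1$. This holds because the absolute degree dominates every Schur component in the reverse lexicographic ordering, and a graded ordering with lower total degree still forces $\rho_1\leq|\rho|\leq\ldots$; if this bound fails, the fallback is to use $M_\ell$ equal to the total degree, as in the general statement.
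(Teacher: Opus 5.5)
Your orthogonality argument is the paper's. $G_{(\tau_1,\tau_2)}(X,Y)$ is a function of $X$ times $R_{\tau_2}(Z)$, so independence factors the integral. Orthogonality of the $R_\ell$ then forces $\tau_2=\sigma_2$, hence equal exponents, and the $\det(\rho(x))^{2j}W_X$-orthogonality of $P^{j}$ finishes. (The Jacobian $\det(\rho(X))^{-mk}$ that the paper carries along is not needed.) Your polynomiality argument for general $k$ is also correct: you apply Lemma \ref{lemma:product-polynomials-Hermitian-generalisation} to each homogeneous piece of $R_{\tau_2}$, since the cofactor argument only needs the exponent of $\det$ to dominate the degree. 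This makes explicit a step the paper leaves implicit.

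What does not close is the $k=1$ clause with $M_\ell=\nu_1$, and your sketched justification cannot be completed. The graded reverse lexicographic order puts every partition of smaller weight below $\nu$. So for a lower-weight component $s_\rho$ of $R_\ell$, the bound $\rho_1\leq|\rho|<|\nu|$ only gives $\rho_1<|\nu|$, not $\rho_1\leq\nu_1$. For instance, take $m=2$ and $\nu=(2,2)$, and let $R_\ell$ contain $c\,s_{(3)}$. Take the part of $\det(A)^{2}R_\ell(A^{-\frac{1}{2}}BA^{-\frac{1}{2}})$ of degree $3$ in $B$, evaluate it at $B=I_2$, and write it in the eigenvalues $a_1,a_2$ of $A$. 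It equals $c\,(a_2^2/a_1+a_1^2/a_2)$ plus a polynomial; the $s_{(2,1)}$ component contributes only multiples of $a_1+a_2$. Hence $\det(A)^{\nu_1}R_\ell(A^{-\frac{1}{2}}BA^{-\frac{1}{2}})$ is not a polynomial unless $c=0$, and nothing in the hypotheses on a general invariant $W_Z$ forces $c=0$.

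Your fallback $M_\ell=|\nu|$ proves the general clause, not the sharpened one. The paper's own proof has the same omission. Lemma \ref{lemma:product-polynomials-Hermitian} is really about homogeneous $P$ (its conclusion is bihomogeneous), and its reduction to $s_\rho$ with $\rho_1\leq\tau_1$ is valid only within a fixed weight. The $k=1$ statement therefore needs an extra hypothesis: every Schur component $s_\mu$ of $R_\ell$ has $\mu_1\leq\nu_1$. This holds for the Hermitian Jacobi polynomials, since expanding each row of \eqref{eq:def-HO-polynomials} into monomials produces only $s_\mu$ with $\mu\subseteq\nu$. That is the case used in Theorem \ref{thm:expression-matrix-Jacobi-simplex}.
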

\begin{proof}
    We modify the proof of \cite[Proposition 3.1]{Griffiths2011} using the coordinate transform
    \begin{equation*}
        Z\mapsto (\rho (X))^{-\frac{1}{2}}Y(\rho (X))^{-\frac{1}{2}} ,
    \end{equation*}
    which has Jacobian equal to $\det (\rho (X))^{-mk}$, see \cite[Theorem 3.5]{mathai1997jacobians}.

    More precisely, note that the assumption of conditionality implies that
    \begin{equation*}
        W(dx,dy)=W_X(dx) W_{Z}(\det (\rho (x))^{-mk} dy) .
    \end{equation*}
    Therefore, for $j,r\in\Theta_d$ and $\ell ,s\in\Theta_k$ we have
    \begin{align*}
        \int G_{(j,\ell )}(x,y) G_{(r,s)}(x,y)W(dx,dy)
        =&\int P_j^{M_{\ell}}(x)P_r^{M_{s}}(x)\det (\rho (x))^{M_{\ell} +M_{s}}W_X(dx) \\
        \int R_{\ell}(z)R_s(z) W_Z(dz) 
        =&\int P_j^{M_{\ell}}(x)P_r^{M_{\ell}}(x)\det (\rho (x))^{2M_{\ell}}W_X(dx)c_{\ell }\delta_{\ell s} \\
        =&b_jc_{\ell}\delta_{jr}\delta_{\ell s} ,
    \end{align*}
    where
    \begin{equation*}
        c_{\ell} :=\int R_{\ell}(z)R_{\ell}(z) W_Z(dz)\textrm{ and }
        b_j :=\int P_j^{M_{\ell}}(x)P_j^{M_{\ell}}(x)\det (\rho (x))^{2M_{\ell}} W_X(dx).
    \end{equation*}
    That these are invariant polynomials follows directly from lemma \ref{lemma:product-polynomials-Hermitian-generalisation} and for $k=1$ from lemma \ref{lemma:product-polynomials-Hermitian}.
\end{proof}

\begin{remark}
    Note that proposition \ref{prop:naive-Koornwinders-method} does not necessarily construct a basis of orthogonal polynomials on the space $P(\mathrm{Her}(m)^{k+d})^{\mathbf{U}(m)}$ in contrast with the scalar version of Koornwinders method.
    Indeed, we do not have $\#\Theta_{k+d}=\#\Theta_k\cdot\#\Theta_d$ when $m>1$. 

    This problem already presents itself for the product of two measures on sets of multiple Hermitian matrices invariant under simultaneous conjugation.
    The product of invariant orthogonal polynomials are obvious sets of orthogonal polynomials, but they do not span the whole space.
    In this case this isn't all that surprising since the product measure has additional symmetries, which are reflected by these solutions.
    Indeed, both the product measure and the product of polynomials are invariant under conjugation of unitary matrices of the sets of variables separately.
    We interpret the polynomials in proposition \ref{prop:naive-Koornwinders-method} as the analogue of these product solutions for the distribution $W$.
\end{remark}

\subsection{Multivariate Hermitian Jacobi polynomials}
In this subsection we will investigate orthogonal polynomials with respect to a product of complex matrix variate Beta distributions as an illustrative example.
Let $\alpha =(\alpha_1,\dots ,\alpha_{k})$, $\beta =(\beta_1,\dots ,\beta_k)$ be such that $\alpha_j,\beta_j >m/2-1$ for $1\leq j\leq k$.
The \emph{multivariate complex matrix variate Beta distributions} is the measure
\begin{equation*}
    MB_{k,m}^{(\alpha,\beta)} (d\Lambda_1,\dots ,d\Lambda_k) :=B^{(\alpha_1,\beta_1)}_m(d\Lambda_1)\cdots B_m^{(\kappa_k ,a_k)}(d\Lambda_k) ,
\end{equation*}
where $B_m^{(\kappa_1,\kappa_2)}$ is the complex matrix variate Beta distribution given in \eqref{eq:complex-matrix-variate-Beta-distribution}.
The operator $\bigotimes_{j=1}^k\mathcal{L}_{\alpha_j,\beta_j}^m$, with $\mathcal{L}_{\alpha_j,\beta_j}^m$ as in \eqref{eq:HO-operator}, is symmetric with respect to $MB_{k,m}^{(\alpha,\beta)}$.

Let $(P_{\tau}^{(\alpha_j,\beta_j ,2)})_{\tau\in\mathcal{P}}$ be the Hermitian Jacobi polynomials of index $(\alpha_j ,\beta_j)$.
The \emph{product polynomials}
\begin{equation*}
    \left(\bigotimes_{j=1}^kP_{\tau_j}^{(\alpha_j,\beta_j ,2)} \right)_{\tau_1,\dots ,\tau_k\in\mathcal{P}}
\end{equation*}
are obviously pairwise orthogonal and invariant under simultaneous conjugation by unitary matrices.
Furthermore, they are clearly eigenfunctions of the operator $\bigotimes_{j=1}^k\mathcal{L}_{\alpha_j,\beta_j}^m$.
Note however that they do not form a basis of the space $\mathbf{P}(\mathrm{Her}(m)^k)^{\mathbf{U}(m)}$ when $m>1$ and $k>1$.
We will now show that they are still orthogonal to any polynomial in $\mathbf{P}(\mathrm{Her}(m)^k)^{\mathbf{U}(m)}$ of smaller absolute degree.

Let $\tau_1,\dots ,\tau_k\in\mathcal{P}$ be arbitrary.
Note that by highest-weight theory \cite[Chapter 3]{Goodman2009-hn} there exists a unique partition $\phi\in (\tau_1,\dots ,\tau_k)$ such that $\phi\geq\psi$ for all $\psi\in (\tau_1,\dots ,\tau_k)$, which we will call the maximal $\phi\in (\tau_1,\dots ,\tau_k)$.
Explicitly, it is given by $\phi =\tau_1+\dots +\tau_k$.
In particular, the space $\mathcal{V}_{\phi}^{\tau_,\dots ,\tau_k}$ defined in \eqref{eq:equivalent-partions} is one dimensional for this maximal partition $\phi\in (\tau_1,\dots ,\tau_k)$.
Furthermore, the product polynomials are triangular with respect to the multivariate Schur polynomials with degree $(\tau_1,\dots ,\tau_k;\phi )$.
Indeed, using \eqref{eq:product-Schur-polynomials} and the fact that the Hermitian Jacobi polynomials are triangular with respect to the Schur polynomials, we see
\begin{equation}\label{eq:triangularity-product-Hermitian-Jacobi}
\begin{split}
    \bigotimes_{j=1}^kP^{(\alpha_j,\beta_j ,2)}_{\tau_j}
    =&\sum_{\rho_1\leq\tau_1,\dots ,\rho_k\leq\tau_k}a_{\rho_1,\dots ,\rho_k} s_{\rho_1}\otimes\dots\otimes s_{\rho_k} \\
    =&\sum_{\rho_1\leq\tau_1,\dots ,\rho_k\leq\tau_k}\sum_{\psi\in (\rho_1,\dots ,\rho_k)}a_{\rho_1,\dots ,\rho_k}\frac{s_{\psi}^{\rho_1,\dots ,\rho_k}(I_m,\dots ,I_m)}{s_{\psi}(I_m)}s_{\psi}^{\rho_1,\dots ,\rho_k}
\end{split}
\end{equation}
for some constants $a_{\rho_1,\dots ,\rho_k}$.

Now we want to show that these product polynomials are orthogonal to all the multivariate Schur polynomials of lower absolute degree.

\begin{proposition}\label{prop:orthogonality-product-polynomials}
    Let $\tau_1,\dots ,\tau_k\in\mathcal{P}$, then
    \begin{equation*}
        \int_{\mathrm{Her}(m)^k} P_{\tau_1}^{(\alpha_1 ,\beta_1 ,2)}(\Lambda_1)\cdots P_{\tau_k}^{(\alpha_k ,\beta_k ,2)}(\Lambda_k)s^{\rho_1,\dots ,\rho_k}_{\psi}(\Lambda_1 ,\dots ,\Lambda_k)MB_{k,m}^{(\alpha,\beta )} (d\Lambda_1,\dots ,d\Lambda_k) =0
    \end{equation*}
    for all $\psi\in (\rho_1,\dots ,\rho_k)$, whenever $\rho_j <\tau_j$ for some $1\leq j\leq k$.
\end{proposition}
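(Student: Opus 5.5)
The plan is to reduce to a single-variable orthogonality statement by integrating out all variables except $\Lambda_j$, using that the measure $MB_{k,m}^{(\alpha,\beta)}$ is a product measure. Fix $j$ with $\rho_j<\tau_j$. By Fubini, write the integral as
\begin{equation*}
    \int_{\mathrm{Her}(m)} P_{\tau_j}^{(\alpha_j,\beta_j,2)}(\Lambda_j)\, Q(\Lambda_j)\, B_m^{(\alpha_j,\beta_j)}(d\Lambda_j),
\end{equation*}
where $Q$ is obtained by integrating $\prod_{i\neq j}P_{\tau_i}^{(\alpha_i,\beta_i,2)}(\Lambda_i)\, s_{\psi}^{\rho_1,\dots,\rho_k}(\Lambda_1,\dots,\Lambda_k)$ against the measure $\prod_{i\neq j}B_m^{(\alpha_i,\beta_i)}(d\Lambda_i)$ on $\mathrm{Her}(m)^{k-1}$. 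This measure is invariant under simultaneous conjugation, so by lemma \ref{lemma:reducing-variables} $Q$ is an invariant polynomial of one Hermitian matrix argument, i.e.\ a symmetric polynomial in the eigenvalues of $\Lambda_j$.

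The key step is identifying $Q$ as a multiple of $s_{\rho_j}$, exactly as in example \ref{ex:absolute-degree-multivariate-Schur-polynomials}. The factor $\prod_{i\neq j}P_{\tau_i}^{(\alpha_i,\beta_i,2)}(\Lambda_i)$ does not depend on $\Lambda_j$, so it can be absorbed into the measure. The measure $\prod_{i\neq j}P_{\tau_i}(\Lambda_i)B_m^{(\alpha_i,\beta_i)}(d\Lambda_i)$ is a signed measure that is still invariant under simultaneous conjugation, because each $P_{\tau_i}$ is conjugation invariant. The argument of that example only uses invariance and linearity of integration, not positivity, so it still applies: $s_\psi^{\rho_1,\dots,\rho_k}$ lies in $V_{\rho_1}\otimes\overline V_{\rho_1}\otimes\cdots\otimes V_{\rho_k}\otimes\overline V_{\rho_k}$. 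Integrating out the other variables gives a $\mathbf{U}(m)$-invariant element of $V_{\rho_j}\otimes\overline V_{\rho_j}$. Since $(Gl_m(\mathbb{C}),\mathbf{U}(m))$ is a Gelfand pair, this invariant space is one-dimensional and spanned by $s_{\rho_j}$, so $Q=c\,s_{\rho_j}$ for some constant $c$.

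It then remains to show $\int P_{\tau_j}^{(\alpha_j,\beta_j,2)} s_{\rho_j}\, dB_m^{(\alpha_j,\beta_j)}=0$ when $\rho_j<\tau_j$. This follows because the Hermitian Jacobi polynomials are orthogonal and triangular with respect to the Schur polynomials. Triangularity lets us write $s_{\rho_j}$ as a combination of $P_{\sigma}^{(\alpha_j,\beta_j,2)}$ with $\sigma\le\rho_j<\tau_j$, so $\sigma\neq\tau_j$, and each such term integrates to zero against $P_{\tau_j}$ by orthogonality (cf.\ corollary \ref{cor:eigenfunctions-orthogonal-polynomials}).

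I expect the main obstacle to be the middle step: making rigorous that partial integration of an element of the isotypic tensor component lands in $V_{\rho_j}\otimes\overline V_{\rho_j}$. Integrating against a fixed measure in the other variables is a linear map that intertwines the $Gl_m(\mathbb{C})$-action on the $j$-th factor, since the congruence action is only applied to $\Lambda_j$. Hence it maps the tensor product of the components onto (a quotient of) $V_{\rho_j}\otimes\overline V_{\rho_j}$, and the invariance of the result under $\mathbf{U}(m)$ comes from lemma \ref{lemma:reducing-variables}. Some care is needed here because the measure is signed and because the degree bookkeeping must respect the graded ordering. If $|\rho_j|<|\tau_j|$, the conclusion also follows more directly, since $Q$ then has total degree less than $|\tau_j|$.
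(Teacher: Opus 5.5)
Your proposal is correct and follows essentially the same route as the paper: integrate out the variables $\Lambda_i$ for $i\neq j$, use Example \ref{ex:absolute-degree-multivariate-Schur-polynomials} together with Lemma \ref{lemma:reducing-variables} to identify the result as a multiple of $s_{\rho_j}$, and then conclude from the orthogonality of $P_{\tau_j}^{(\alpha_j,\beta_j,2)}$ to Schur polynomials below $\tau_j$. The obstacle you anticipate is milder than you expect: partial integration acts as the identity on the $j$th tensor factor tensored with a linear functional on the other factors, so it maps directly into $V_{\rho_j}\otimes\overline{V}_{\rho_j}$ without any intertwining argument; your remark that the signed weight $\prod_{i\neq j}P_{\tau_i}^{(\alpha_i,\beta_i,2)}(\Lambda_i)\,B_m^{(\alpha_i,\beta_i)}(d\Lambda_i)$ remains conjugation invariant makes explicit a point the paper leaves implicit.
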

\begin{proof}
    Suppose $\rho_j <\tau_j$ for some $1\leq j\leq k$.
    By example \ref{ex:absolute-degree-multivariate-Schur-polynomials}
    \begin{equation*}
        \int_{\mathrm{Her}(m)^{k-1}} s^{\rho_1,\dots ,\rho_k}_{\psi}(\Lambda_1,\dots ,\Lambda_k)\prod_{\ell=1;\ell\neq j}^kP_{\tau_{\ell}}^{(\alpha_{\ell} ,\beta_{\ell} ,2)}(\Lambda_{\ell})B_{m}^{(\alpha_{\ell},\beta_{\ell} )} (d\Lambda_{\ell}) =as_{\rho_j}(\Lambda_j )
    \end{equation*}
    for some $a\in\mathbb{R}$.
    The proposition follows from the fact that $P_{\tau_{j}}^{(\alpha_{j} ,\beta_{j} ,2)}$ is orthogonal to Schur polynomials of degree strictly less than $\tau_j$ with respect to $B_{m}^{(\alpha_{j},\beta_{j} )} (d\Lambda_{j})$.
\end{proof}

Using results in \cite{Bedoya2007}, one can show that
\begin{equation*}
    \int_{\mathbf{U}(m)}P^{(\alpha_1,\beta_1,2)}_{(1)}(\Lambda_1)P^{(\alpha_1,\beta_1,2)}_{(1)}(\Lambda_2)s_{(1,1)}^{(1),(1)}(\Lambda_1 ,\Lambda_2)MB_{2,2}^{(\alpha ,\beta)}(d\Lambda_1 ,d\Lambda_2)\neq 0
\end{equation*}
so we cannot extend the above theorem to the index $\phi$.

\begin{theorem}\label{thm:triangularity-multivariate-Hermitian-operator}
    The operator $\bigotimes_{j=1}^k\mathcal{L}^m_{\alpha_j ,\beta_j}$ is strongly triangular with respect to the multivariate Schur polynomials with leading coefficients
    \begin{equation*}
        a^{\tau_1,\dots ,\tau_k;\phi}_{\tau_1,\dots ,\tau_k;\phi} =\prod_{j=1}^k\sum_{\ell =1}^m\tau_{j,\ell}\left(\tau_{j,\ell} +\alpha_j+\beta_j +1+m-2\ell\right) .
    \end{equation*}
\end{theorem}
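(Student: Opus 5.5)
The plan is to work one matrix variable at a time, using the decomposition
\begin{equation*}
    \mathbf{P}_{r_1,\dots ,r_k}(\mathrm{Her}(m)^k)=\bigoplus_{\tau_1\vdash r_1,\dots ,\tau_k\vdash r_k}\bigotimes_{j=1}^k V_{\tau_j}\otimes\overline{V}_{\tau_j},
\end{equation*}
and to read $\bigotimes_{j=1}^k\mathcal{L}^m_{\alpha_j,\beta_j}$ as the composition of the operators $\mathcal{L}^m_{\alpha_j,\beta_j}$, each acting only on the coefficients of $\Lambda_j$.

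\textbf{Step 1: splitting each factor.} Write $\mathcal{L}^m_{\alpha_j,\beta_j}=\mathcal{D}_0^{(j)}+\mathcal{D}_-^{(j)}$. Here $\mathcal{D}_0^{(j)}$ is the part of \eqref{eq:HO-operator} that preserves the degree in $\Lambda_j$. The remainder $\mathcal{D}_-^{(j)}$ consists of the terms with coefficients $\Lambda_{\gamma\beta}$ in the second-order part and constant coefficients in the first-order part. It lowers the degree in $\Lambda_j$ by exactly one and leaves the other variables untouched.

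By Proposition \ref{prop:identity-on-irreducible-components}, applied in the $j$-th tensor factor, $\mathcal{D}_0^{(j)}$ acts on $\bigotimes_{\ell}V_{\tau_\ell}\otimes\overline{V}_{\tau_\ell}$ as the scalar $r^{(\alpha_j,\beta_j)}_{\tau_j}$. Since $s^{\tau_1,\dots,\tau_k}_{\phi}\in\mathcal{S}^{\tau_1,\dots,\tau_k}_{\phi}$ lies in this space, expanding the composition gives
\begin{equation*}
    \Bigl(\bigotimes_{j=1}^k\mathcal{L}^m_{\alpha_j,\beta_j}\Bigr)s^{\tau_1,\dots,\tau_k}_{\phi}=\Bigl(\prod_{j=1}^k r^{(\alpha_j,\beta_j)}_{\tau_j}\Bigr)s^{\tau_1,\dots,\tau_k}_{\phi}+R ,
\end{equation*}
where $R$ collects every product containing at least one factor $\mathcal{D}_-^{(j)}$. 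This leading scalar agrees with the stated coefficient up to the sign convention in $r_\tau$, which I would fix when writing the proof.

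\textbf{Step 2: locating the remainder in the order.} Every operator involved commutes with simultaneous unitary conjugation, so $R$ is invariant and can be expanded in the multivariate Schur basis. The claim is that each of its terms $s^{\rho_1,\dots,\rho_k}_{\psi}$ satisfies $(\rho_1,\dots,\rho_k;\psi)<(\tau_1,\dots,\tau_k;\phi)$.

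Take a term of $R$ and let $j$ be the largest index at which a lowering factor $\mathcal{D}_-^{(j)}$ was applied. For $\ell>j$ only $\mathcal{D}_0^{(\ell)}$ acted, so the $\ell$-th factor is still $V_{\tau_\ell}\otimes\overline{V}_{\tau_\ell}$ and $\rho_\ell=\tau_\ell$. The $j$-th factor has strictly smaller weight, so $|\rho_j|<|\tau_j|$, hence $\rho_j<\tau_j$ in the graded natural order. This is exactly the first clause in the definition of the order on $\Theta_k$.

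No term of $R$ has the same $(\tau_1,\dots,\tau_k)$, and the leading part is an exact scalar multiple of $s^{\tau_1,\dots,\tau_k}_{\phi}$. Hence no other $\phi'$, equivalent or not, appears on the diagonal. The diagonal coefficient depends only on $(\tau_1,\dots,\tau_k)$, so it is the same for all $\phi'\equiv\phi$, which is what strong triangularity requires.

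\textbf{Main obstacle.} The delicate point is Step 1: justifying that the degree-preserving part of $\mathcal{L}^m_{\alpha_j,\beta_j}$, acting in one tensor factor, is exactly the scalar $r^{(\alpha_j,\beta_j)}_{\tau_j}$ on $V_{\tau_j}\otimes\overline{V}_{\tau_j}$ while the other variables are only spectators. I would argue this with the $Gl_m(\mathbb{C})^k$-action, acting by congruence separately in each variable, together with Schur's lemma, exactly as in the proof of Proposition \ref{prop:identity-on-irreducible-components}. The point is that $\mathcal{D}_0^{(j)}$ intertwines the $j$-th $Gl_m(\mathbb{C})$ factor and commutes with the others.

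A second, minor point is the bookkeeping in Step 2. One must check that the lowering parts produce only components of smaller weight in the affected variable and never reshuffle the partitions in the unaffected ones; this holds because $\mathcal{D}_-^{(j)}$ acts on $\Lambda_j$ alone.
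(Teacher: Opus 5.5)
Your proposal is correct and follows the paper's own proof: each factor's degree-preserving part acts on $V_{\tau_j}\otimes\overline{V}_{\tau_j}$ by the scalar $r^{(\alpha_j,\beta_j)}_{\tau_j}$ (Proposition~\ref{prop:identity-on-irreducible-components}), so the diagonal coefficient depends only on $(\tau_1,\dots,\tau_k)$ and not on $\phi$. Your Step 2 makes explicit the bookkeeping that the paper leaves implicit: choosing the largest index at which a lowering factor acts matches the order on $\Theta_k$, which compares from the last variable downward. Your sign remark is also right: the diagonal entry is actually $\prod_j r^{(\alpha_j,\beta_j)}_{\tau_j}$, which differs from the displayed formula by a factor $(-1)^k$.
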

\begin{proof}
    By proposition \ref{prop:identity-on-irreducible-components}, the part of the operator $\mathcal{L}_{\alpha_j ,\beta_j}$ which preserves the total degree in the coefficients of the $j^{\textrm{th}}$-matrix variable acts as the identity times $r_{\tau_j}^{(\alpha_j,\beta_j)}$ on $V_{\tau_j}\otimes\overline{V}_{\tau_j}$.
    The part of $\bigotimes_{j=1}^k\mathcal{L}^m_{\alpha_j ,\beta_j}$ which preserves the total degree in all variables acts therefore as the identity times
    \begin{equation*}
        \prod_{j=1}^k\sum_{\ell =1}^m\tau_{j,\ell}\left(\tau_{j,\ell} +\alpha_j+\beta_j +1+m-2\ell\right)
    \end{equation*}
    on $V_{\tau_1}\otimes\overline{V}_{\tau_1}\otimes\dots\otimes V_{\tau_k}\otimes\overline{V}_{\tau_k}$.
    Since this does not depend on $\phi$ we are done.
\end{proof}

From proposition \ref{prop:triangularity-eigenpolynomials} a basis of eigenfunctions of $\bigotimes_{j=1}^k\mathcal{L}^m_{\alpha_j ,\beta_j}$ can be obtained by applying the Gram--Schmidt orthogonalisation process to the multivariate Schur polynomials.

\begin{definition}
    The \emph{multivariate Hermitian Jacobi polynomials} $(\tilde{P}^{(\alpha ,\beta)}_{\tau_1,\dots ,\tau_k ;\phi})_{(\tau_1,\dots ,\tau_k;\phi )\in\Theta_k}$ are defined by
    \begin{equation*}
        \tilde{P}^{(\alpha ,\beta)}_{\tau_1,\dots ,\tau_k ;\phi} =\sum_{(\rho_1,\dots ,\rho_k ;\psi)\leq (\tau_1,\dots ,\tau_k;\phi)} c^{\tau_1,\dots ,\tau_k;\phi}_{\rho_1,\dots ,\rho_k ;\psi} s_{\psi}^{\rho_1,\dots ,\rho_k}\textrm{ with }c^{\tau_1,\dots ,\tau_k;\phi}_{\tau_1,\dots ,\tau_k ;\phi} =1
    \end{equation*}
    and
    \begin{equation*}
        \int \tilde{P}^{(\alpha ,\beta)}_{\tau_1,\dots ,\tau_k ;\phi}s_{\psi}^{\rho_1,\dots ,\rho_k}dMB_{k,m}^{(\alpha,\beta)} =0\textrm{ for all }(\rho_1,\dots ,\rho_k ;\psi)< (\tau_1,\dots ,\tau_k;\phi) .
    \end{equation*}
\end{definition}

\begin{remark}
    From the definition it is clear that $\tilde{P}^{(\alpha ,\beta)}_{\tau_1,\dots ,\tau_k ;\phi}$ with $\tilde{P}^{(\alpha ,\beta)}_{\rho_1,\dots ,\rho_k ;\psi}$ are orthogonal with respect to $MB^{(\alpha ,\beta)}_{k,m}$ if $(\rho_1,\dots ,\rho_k ;\psi) <(\tau_1,\dots ,\tau_k ;\phi)$.
    For non-comparable indices this is not so clear.
    Note that the orthogonality of $\tilde{P}^{(\alpha ,\beta)}_{\tau_1,\dots ,\tau_k ;\phi}$ with $\tilde{P}^{(\alpha ,\beta)}_{\tau_1,\dots ,\tau_k ;\phi}$ for $\phi\equiv\phi'$ depends on the choice of $s_{\phi}^{\tau_1,\dots ,\tau_k}$ and $s_{\phi'}^{\tau_1,\dots ,\tau_k}$, so we cannot expect this.
\end{remark}

\begin{conjecture}
    The polynomials $\tilde{P}^{(\alpha ,\beta)}_{\tau_1,\dots ,\tau_k ;\phi}$ and $\tilde{P}^{(\alpha ,\beta)}_{\rho_1,\dots ,\rho_k ;\psi}$ are pairwise orthogonal for non-equivalent indices.
\end{conjecture}

While there is no uniqueness if the space $\mathcal{S}_{\psi}^{\tau_1,\dots ,\tau_k}$ is not one-dimensional, everything is uniquely defined and does not depend on choices for $\phi\in (\tau_1,\dots ,\tau_k)$ such that $\mathcal{S}_{\phi}^{\tau_1,\dots ,\tau_k}$ is one dimensional.
It is natural to ask if one is able to obtain explicit expressions in these cases.
Particularly interesting are the maximal element $\phi\in (\tau_1,\dots ,\tau_k)$ and the PRV component \cite{Parthasarathy1967, Dimitrov2009GeometricRO}.

\section{Hermitian Jacobi polynomials on the simplex}\label{chap:Jacobi-polynomials}
In this section we study the Jacobi operator in the simplex of Hermitian matrices which was independently introduced in \cite[\textsection 4.2.]{Songzi-Li2019} and \cite[Definition 2.3.]{kuijper2025}.
We show how this operator is related to the radial part of the Laplace--Beltrami operator on partial flag manifolds in suitable coordinates.
Furthermore, we will show that its invariant measure is a complex matrix variate Dirichlet distribution. 

In \cite[\textsection 4.2.]{Songzi-Li2019}, it is shown that simplex of Hermitian matrices together with this operator is a \emph{polynomial model}.
For $1\times 1$-matrices the Hermitian Jacobi operator on the simplex simplifies to the Jacobi operator on the simplex, we will recall some of its properties in the first subsection.
The main goal of this section is to find a similar extension for the Hermitian Jacobi operator on the simplex, which can also be seen as a matrix extension of the Jacobi operator on the simplex.

\subsection{Jacobi polynomials on the simplex}\label{sec:scalar-Jacobi-simplex}
In this subsection we briefly recall the Jacobi polynomials on the simplex of some multi-index $\kappa =(\kappa_1 ,\dots ,\kappa_{k+1})$ with $\kappa_j >-\frac{1}{2}$.
For further details we refer to \cite[\textsection 3]{Griffiths2011} and \cite[\textsection 2.3.3]{Dunkl2014}. 
The simplex is defined by
\begin{align*}
    \Sigma_{k} :=\left\{(\lambda_1,\dots ,\lambda_k)\in\mathbb{R}^{k}\biggm|\sum_{j=1}^{k}\lambda_{j}\leq 1,\lambda_j\geq 0\textrm{ for } 1\leq j\leq k\right\}
\end{align*}
and the \emph{Jacobi operator on the simplex} of index $\kappa$ by
\begin{align*}
    \mathcal{G}_{\kappa} :=\sum_{j=1}^{k}\lambda_j (1-\lambda_j )\frac{\partial^2}{\partial\lambda_j^2} 
    +\sum_{j=1}^{k}\left(\left( \kappa_j +\frac{1}{2}\right) -\left( |\kappa | +\frac{k+1}{2}\right)\lambda_j\right)\frac{\partial}{\partial\lambda_j} -\sum_{1\leq j\neq\ell\leq k}\lambda_j\lambda_{\ell}\frac{\partial^2}{\partial\lambda_j\partial\lambda_{\ell}} ,
\end{align*}
where $|\kappa |:=\kappa_1+\dots +\kappa_{k+1}$.
Note that this operator is triangular with respect to the monomial polynomials with the (reverse) lexicographical ordering.
Since this is a total order, all polynomials are triangular, and the eigenpolynomials of $\mathcal{G}_{\kappa}$ are therefore any set of orthogonal polynomials with respect to the symmetric measure of $\mathcal{G}_{\kappa}$ by corollary \ref{cor:eigenfunctions-orthogonal-polynomials}.

The operator $\mathcal{G}_{\kappa}$ is symmetric with respect to the Dirichlet measure on $\Sigma_{k}$
\begin{equation*}
    \frac{\Gamma (|\kappa |+n/2)}{\prod_{j=1}^n\Gamma (\kappa_j +1/2)}\lambda_1^{\kappa_1-1/2}\cdots\lambda_{k}^{\kappa_{k}-1/2}(1-\lambda_1-\dots-\lambda_{k})^{\kappa_{k+1}-1/2}d\lambda_1\cdots d\lambda_k ,
\end{equation*}
has a discrete spectrum given by $-j(j+|\kappa |+(k+1)/2-1)$ for $j\in\mathbb{N}_0$ and its eigenvalues are given by the Jacobi polynomials on the simplex.
An explicit expression for such polynomials is given by
\begin{align*}
    P_{\tau}^{(\kappa)} (\lambda_1,\dots ,\lambda_{k}) :=\frac{1}{\sqrt{C_{\tau} (\kappa)}}\prod_{j=1}^{k}\left( 1-\sum_{u =1}^{j-1}\lambda_u\right)^{\tau_j} P_{\tau_j}^{\left(\kappa_j ,\sum_{\ell =j+1}^{k+1}\kappa_{\ell} +2\sum_{\ell=j+1}^k\tau_{\ell}\right) }\left(\frac{\lambda_j}{1-\sum_{u=1}^{j-1}\lambda_u}\right)
\end{align*}
for some $\tau\in\mathbb{N}^{k}$.
This expression can be proved using the scalar version of Koornwinders method \cite[Proposition 3.2.]{Griffiths2011}.
The corresponding eigenvalues can easily be obtained by letting the operator $\mathcal{G}_{\kappa}$ act on the highest monomial term of $P_{\tau}^{(\kappa)}$ with respect to the (reverse) lexicographical ordering, since the operator $\mathcal{G}_{\kappa}$ is triangular with respect to this ordered basis.
The eigenvalue of $P_{\tau}^{(\kappa)}$ is given by
\begin{equation*}
    -|\tau |\left(|\tau| +|\kappa|+\frac{k-1}{2}\right) .
\end{equation*}

\subsection{Squared radial coordinates on partial flag manifolds}
Let $k,m,n_1,\dots ,n_{k+1}\in\mathbb{N}$ be positive integers such that $m\leq\min (n_1,\dots ,n_{k+1})$ and define $n:=n_1+\dots +n_{k+1}$, $d_j:=n_{1}+\dots +n_{j}$ for $1\leq j\leq k+1$.
The \emph{partial flag manifold} of signature $(d_1,\dots ,d_{k})$ can be identified with the homogeneous space
\begin{equation*}
    F_{d_1,\dots ,d_k}(\mathbb{C}^n) =\frac{\mathbf{U}(n)}{\mathbf{U}(n_1)\times\dots\times\mathbf{U}(n_{k+1})},
\end{equation*}
here we identify $\mathbf{U}(n_1)\times\dots\times\mathbf{U}(n_{k+1})$ with the set of diagonal block matrices in $\mathbf{U}(n)$.

Note that the canonical projection $\pi_F:\mathbf{U}(n)\rightarrow F_{d_1,\dots ,d_k}(\mathbb{C}^n)$ is a Riemannian submersion with totally geodesic fibres isometric to $\mathbf{U}(n_1)\times\dots\times\dots\mathbf{U}(n_{k+1})$ \cite[Theorem 9.80]{Besse2007-en}.
In particular the Laplace--Beltrami operator $\Delta_{F_{d_1,\dots ,d_k}(\mathbb{C}^n)}$ on $F_{d_1,\dots ,d_k}(\mathbb{C}^n)$ is the projection of the Laplace--Beltrami operator $\Delta_{\mathbf{U}(n)}$ on $\mathbf{U}(n)$ with the bi-invariant metric induced by the Killing form, i.e. the unique operator $\Delta_{F_{d_1,\dots ,d_k}(\mathbb{C}^n)}$ satisfying
\begin{equation}\label{eq:Laplce-Beltrami-on-partial-flag}
    \Delta_{\mathbf{U}(n)}(f\circ\pi_F )=(\Delta_{F_{d_1,\dots ,d_k}}f)\circ\pi_F\textrm{ for }f\in C^{\infty}(\mathbf{U}(n)) ,
\end{equation}
see \cite[Remark 3.1.11.]{Baudoin2024-is}.

We will look at the double coset space
\begin{equation}\label{eq:double-coset-space-partial-flag}
   \mathbf{U}(n-m)\times\mathbf{U} (m)\backslash \mathbf{U}(n)/\mathbf{U}(n_1)\times\dots\times\mathbf{U}(n_{k+1}),
\end{equation}
where we see $\mathbf{U}(n-m)\times\mathbf{U} (m)$ as a subgroup of $\mathbf{U}(n)$ with the help of the diagonal embedding $(U,V)\mapsto \begin{pmatrix} U & 0\\ 0 & V\end{pmatrix}$.
Note that if $k=1$, i.e. in the Grassmannian case, this reduces to
\begin{equation*}
    \mathbf{U}(n-m)\times\mathbf{U}(m)\backslash\mathbf{U}(n)/\mathbf{U}(n-n_2)\times\mathbf{U}(n_2) ,
\end{equation*}
which fits into the framework of the generalised Cartan decomposition of B. Hoogenboom \cite{Hoogenboom1984-ns}, \cite[Part III]{HECKMAN1995} and \cite[\textsection 3 Example 3]{MATSUKI199749}.
If additionally, $m=n_2$ the left and right action are by the same group and it reduces to the Cartan decomposition for symmetric spaces.

\begin{definition}
    The \emph{spherical functions} associated to $F_{d_1,\dots ,d_k}(\mathbb{C}^n)$ are the complex valued functions on $\mathbf{U}(n)$ invariant under right multiplication by $\mathbf{U}(n_1)\times\dots\times\mathbf{U}(n_{k+1})$ and the \emph{zonal spherical functions} associated to $F_{d_1,\dots ,d_{k}}(\mathbb{C}^n)$ are the spherical functions which are also invariant under right multiplication with $\mathbf{U}(n-m)\times\mathbf{U}(m)$.
\end{definition}

Spherical functions associated to $F_{d_1,\dots ,d_{k}}(\mathbb{C}^n)$ can be identified with functions on $F_{d_1,\dots ,d_{k}}(\mathbb{C}^n)$, while zonal spherical function associated to $F_{d_1,\dots ,d_{k}}(\mathbb{C}^n)$ can be identified with functions on the double coset space \eqref{eq:double-coset-space-partial-flag}.

We will now identify the zonal spherical functions associated to $F_{d_1,\dots ,d_{k}}(\mathbb{C}^n)$ with functions on the simplex of Hermitian matrices invariant under simultaneous conjugation with unitary matrices.

To do this we will parametrise (a dense subset of) the double coset space \eqref{eq:double-coset-space-partial-flag} using \emph{squared radial coordinates} on the domain
\begin{equation*}
    \mathcal{D}_m:=\left\{\begin{pmatrix}
        W_1 & \dots & W_{k+1} \\ Z_1 &\dots & Z_{k+1}
    \end{pmatrix}\in\mathbf{U}(n)\big| Z_{j}\in\mathbb{C}^{m\times n_j}, \det (Z_jZ_j^*)\neq 0, 1\leq j\leq k+1\right\} ,
\end{equation*}
by the smooth map
\begin{equation}\label{eq:submersion-radial-coordinates}
    p_m\begin{pmatrix} W_1 & \dots & W_{k+1} \\ Z_1&\dots &Z_{k+1}\end{pmatrix} :=(Z_1Z_1^*,\dots ,Z_{k+1}Z_{k+1}^*) .
\end{equation}
Note that $\mathcal{D}_m$ is an open and dense subset of $\mathbf{U}(n)$ by the assumption $m\leq\min (n_1,\dots ,n_{k+1})$.

Recall the \emph{(adjoint of the) Stiefel manifold}
\begin{equation*}
    V_{n,m}^*(\mathbb{C}) :=\{ M\in\mathbb{C}^{m\times n}\mid MM^*=I_m\}\cong\mathbf{U}(n-m)\backslash\mathbf{U} (n).
\end{equation*}
We will now show that the simultaneous invariants of the Hermitian matrices $p(Z_1,\dots ,Z_{k+1})$ give coordinates on the double coset space \eqref{eq:double-coset-space-partial-flag}.
Note that we can identify $\mathbf{U}(n-m)\backslash \mathbf{U}(n)$ with the adjoint of the Stiefel manifold by looking at the last $m$ rows of a unitary matrix, which we will denote by $(Z_1 ,\dots ,Z_{k+1})\in V^*_{n,m}(\mathbb{C})$ with $Z_j\in\mathbb{C}^{m\times n_j}$ for $1\leq j\leq k+1$.
Multiplying from the right by $U_1\oplus\dots\oplus U_{k+1}\in\mathbf{U}(n_1)\times\dots\times\mathbf{U}(n_{k+1})$ is equivalent to multiplying $Z_j$ by a $n_j\times n_j$ unitary matrix $U_j$ from the right for $1\leq j\leq k+1$, which allows us to recover $Z_j$ from $\Lambda_j :=Z_jZ_j^*$.
This follows from the polar decomposition of rectangular matrices \cite[Theorem 1.1.]{Higham1986} and the fact that a semi-unitary matrix can always be extended to a unitary one by the Gram--Schmidt orthogonalisation process.
Note that to make sure that $Z_jZ_j^*=\tilde{Z}_j\tilde{Z}_j^*$ if and only if $Z_j =\tilde{Z}_jU_j$ for some $U_j\in\mathbf{U}(n_j)$, we need that $m\leq n_j$, since in this case $\mathbf{U}(n_j)$ works transitively on the (adjoint of the) Stiefel manifold $V_{n_j,m}^*(\mathbb{C})$ by right multiplication as can easily be seen from its quotient representation.
Since $\Lambda_{k+1}=I_{m}-\sum_{j=1}^k\Lambda_k$, we can also restrict ourselves to $(\Lambda_1,\dots ,\Lambda_{k})$.
Left multiplication by $I_{n-m}\oplus U$ for some $U\in\mathbf{U}(m)$ translates to freedom of simultaneously conjugation by unitary matrices, which is the same as looking at the simultaneous invariants.
Finally, we note that the $\Lambda_j$ are positive Hermitian matrices whose sum is bounded by $I_m$ and that we can obtain any such $k$-tuple of Hermitian matrices.

In particular, we can see zonal spherical functions (restricted to $\mathcal{D}_m$) as functions on the \emph{simplex of Hermitian matrices}
\begin{equation*}
    \Sigma^m_k :=\left\{ (\Lambda_1,\dots ,\Lambda_{k})\in\mathrm{Her}(m)^k\biggm| \sum_{j=1}^{k}\Lambda_j\leq I_m\textrm{ and }\Lambda_j\geq 0 \,\textrm{ for }1\leq j\leq k\right\}
\end{equation*}
(restricted to $(\Sigma^m_k)^{\circ}$) invariant under simultaneous conjugation by unitary matrices.
Restricted to polynomials, this translates to the isomorphism of vector spaces
\begin{equation*}
    {}^{\mathbf{U}(n-m)\times\mathbf{U}(m)}\mathbf{P}(\mathcal{D}_m)^{\mathbf{U}(n_1)\times\dots\times\mathbf{U}(n_{k+1})}\cong\mathbf{P}((\Sigma_k^m)^{\circ})^{\mathbf{U}(m)} .
\end{equation*}
Care is to be taken when trying to extend this isomorphism from $\mathcal{D}_m$ to $\mathbf{U}(n)$.

\begin{remark}
    The fact that $Z_jZ_j^*=\tilde{Z}_j\tilde{Z}_j^*$ if and only if $Z_j=\tilde{Z}_jU_j$ for some $U_j\in\mathbf{U}(n_j)$, for all $1\leq j\leq k+1$, translates to the fact that the right action of $\mathbf{U}(n_1)\times\dots\times\mathbf{U}(n_{k+1})$ on $\mathbf{U}(n-m)\times\mathbf{U}(m)\backslash\mathbf{U}(n)$ is free.
    This implies in particular that a dense open subset $\hat{V}$ of the double coset space \eqref{eq:double-coset-space-partial-flag} has a unique Riemannian manifold structure, making the canonical projection $\pi_{R,m}:\mathcal{D}_m\rightarrow\hat{V}$ into a Riemannian submersion, see \cite[9.12]{Besse2007-en}.
    In general, this submersion will neither be totally geodesic nor of mean curvature zero.
    In particular, the radial part of the Laplace--Beltrami operator on $F_{d_1,\dots ,d_k}$ as defined below will not be equal to the Laplace--Beltrami operator on an open dense subset of the double coset space \eqref{eq:double-coset-space-partial-flag}.
\end{remark}

\subsection{Hermitian Jacobi operators on the simplex}
Define $\Lambda :=p_m(U)$ for $U\in\mathbf{U}(n)$, where $p_m$ is the Riemannian submersion given by \eqref{eq:submersion-radial-coordinates}.
In particular $\Lambda_j =Z_jZ_j^*$ for $1\leq j\leq k+1$.
We will first determine how the Laplace--Beltrami operator on $F_{d_1,\dots ,d_k}(\mathbb{C}^n)$ acts on functions only depending on $\Lambda =(\Lambda_1 ,\dots ,\Lambda_k )$.
The result of this theorem was already implicitly shown in \cite[Theorem 2.2.]{kuijper2025} in the special case $n_1=\dots =n_{k+1}=m$ using probabilistic arguments.
The full details for the probabilistic argument for the general case are to appear elsewhere.

\begin{theorem}
The Laplace--Beltrami operator on $F_{d_1,\dots ,d_k}(\mathbb{C}^n)$ acts on smooth zonal spherical functions, i.e. smooth functions depending only on $(\Lambda_1,\dots ,\Lambda_{k})$, as
\begin{align*}
    \Delta^{R,m}_{F_{d_1,\dots ,d_{k}}(\mathbb{C}^n)} :=&2\sum_{j=1}^{k}\sum_{\alpha ,\beta ,\gamma ,\delta =1}^{m} (I_m-\Lambda_j)_{\alpha\delta}\Lambda_{j,\gamma\beta}\frac{\partial^2}{\partial\Lambda_{j,\alpha\beta}\partial\Lambda_{j,\gamma\delta}} \\ 
    &-\sum_{1\leq j\neq\ell\leq k}\sum_{\alpha ,\beta ,\gamma ,\delta =1}^m (\Lambda_{j,\alpha\delta}\Lambda_{\ell ,\gamma\beta} +\Lambda_{j,\gamma\beta}\Lambda_{\ell ,\alpha\delta})\frac{\partial^2}{\partial\Lambda_{j,\alpha\beta}\partial\Lambda_{\ell ,\gamma\delta}}\\
    &+2\sum_{j=1}^{k}\sum_{\alpha ,\beta =1}^m\left( n_jI_m -n\Lambda_j\right)_{\alpha\beta} \frac{\partial}{\partial\Lambda_{j,\alpha\beta}} .
\end{align*}
This operator is called the \emph{radial part of the Laplace--Beltrami operator} on $F_{d_1,\dots ,d_k}(\mathbb{C}^n)$.
\end{theorem}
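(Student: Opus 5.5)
We compute the action of $\Delta_{\mathbf{U}(n)}$ on functions of the form $F(Z_1Z_1^*,\dots ,Z_kZ_k^*)$, where $Z=(Z_1,\dots ,Z_{k+1})$ are the last $m$ rows of $U\in\mathbf{U}(n)$. By \eqref{eq:Laplce-Beltrami-on-partial-flag}, this is exactly the action of $\Delta_{F_{d_1,\dots ,d_k}(\mathbb{C}^n)}$ on zonal spherical functions. It suffices to work with the Laplacian in terms of matrix entries. Pick an orthonormal basis $(X_a)$ of $\mathfrak{u}(n)$ for the normalised Killing form, and write $\Delta_{\mathbf{U}(n)}=\sum_a \tilde X_a^2$ with $\tilde X_a f(U)=\frac{d}{dt}f(Ue^{tX_a})|_{t=0}$. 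We then record the first- and second-order action of this operator on the coordinate functions $U_{pq}$ and $\overline{U}_{pq}$. The standard computation (Casimir of the defining representation) gives
\begin{equation*}
\Delta U_{pq}=-c\,n\,U_{pq}\quad\text{and}\quad \Gamma(U_{pq},\overline{U}_{rs})=c\,(\delta_{pr}\delta_{qs}-U_{ps}\overline{U}_{rq}) ,
\end{equation*}
together with $\Gamma(U_{pq},U_{rs})=-c\,U_{ps}U_{rq}$. Here $\Gamma$ is the carré du champ and $c$ is a normalisation constant, which we fix so that the drift has the stated factor $2$.

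Next we push these through the map $\Lambda_{j,\alpha\beta}=\sum_{q\in I_j}Z_{\alpha q}\overline{Z}_{\beta q}$, where $I_j$ is the $j$-th column block. Using $\Delta(fg)=f\Delta g+g\Delta f+2\Gamma(f,g)$, the first-order part $\Delta\Lambda_{j,\alpha\beta}$ has two sources. The first is the $-cn$ eigenvalue on each factor, which contributes $-2cn\Lambda_{j,\alpha\beta}$. The second is the cross term $2\sum_{q\in I_j}\Gamma(Z_{\alpha q},\overline{Z}_{\beta q})=2c\sum_{q\in I_j}(\delta_{\alpha\beta}-|\cdots|)$. Summing over $q\in I_j$, the $\delta$ term gives $2cn_j\delta_{\alpha\beta}$, and the $U_{ps}\overline{U}_{rq}$ term becomes $Z_{\alpha q}\overline Z_{\beta q}$ summed, i.e.\ a further multiple of $\Lambda_j$. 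Keeping track of the exact constants (the $U_{ps}\overline{U}_{rq}$ correction with $s=q$ collapses since the index is fixed) should reproduce $2(n_jI_m-n\Lambda_j)_{\alpha\beta}$.

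The second-order part requires $\Gamma(\Lambda_{j,\alpha\beta},\Lambda_{\ell,\gamma\delta})$. We expand bilinearly into four terms of the types $\Gamma(Z,Z)$, $\Gamma(Z,\overline Z)$, $\Gamma(\overline Z,Z)$ and $\Gamma(\overline Z,\overline Z)$. We then use the orthonormality relations $ZZ^*=I_m$, i.e.\ $\sum_{q}Z_{\alpha q}\overline Z_{\gamma q}=\delta_{\alpha\gamma}$ summed over all blocks, to collapse sums over block indices into $\Lambda$'s. The $\delta$ terms in $\Gamma(Z,\overline Z)$ only survive when $j=\ell$ (same block), and they produce $\delta_{\alpha\delta}\Lambda_{j,\gamma\beta}$. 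The quadratic terms produce $-\Lambda_{j,\alpha\delta}\Lambda_{\ell,\gamma\beta}$ for all $j,\ell$. Combining these gives $(I_m-\Lambda_j)_{\alpha\delta}\Lambda_{j,\gamma\beta}$ on the diagonal and $-\Lambda_{j,\alpha\delta}\Lambda_{\ell,\gamma\beta}$ (symmetrised) off the diagonal, which matches the claim. The formula $\Delta F(\Lambda)=\sum \partial F\,\Delta\Lambda+\sum\partial^2F\,\Gamma(\Lambda,\Lambda)$ then yields the operator, with the diagonal factor $2$ arising from the symmetrisation over the pairs $(\alpha\beta),(\gamma\delta)$.

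The main obstacle is the bookkeeping of constants and of the index conventions. Treating $\Lambda_{j,\alpha\beta}$ and $\Lambda_{j,\beta\alpha}=\overline{\Lambda_{j,\alpha\beta}}$ as independent coordinates (Wirtinger-style) must be matched with the convention in \eqref{eq:HO-operator}, and the Killing-form normalisation must agree with the one in \cite{kuijper2025}. To check this, we compare with $k=1$, where the result must reduce to a multiple of $\mathcal{L}^m_{(\kappa_1,\kappa_2)}$ with $\kappa_j+m/2=n_j$ up to the factor $2$, and with $m=1$, where it must reduce to the Jacobi operator on the simplex of \S\ref{sec:scalar-Jacobi-simplex}. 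We also need that $\Lambda_{k+1}$ is eliminated consistently, since functions depend only on $\Lambda_1,\dots ,\Lambda_k$. This is automatic because we never differentiate in $\Lambda_{k+1}$.
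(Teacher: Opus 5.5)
\textbf{The step that fails.} Your route is a legitimate alternative to the paper's sketch: you do a direct carr\'e du champ computation on the matrix entries of $\mathbf{U}(n)$ and then apply the chain rule. The paper instead runs a unitary Brownian motion, uses polarity of $\mathbf{U}(n)\setminus\mathcal{D}_m$ and the polar decomposition to write an SDE for $\Lambda_j$, and reads off the generator with It\^o's formula. However, your formula $\Gamma(U_{pq},\overline{U}_{rs})=c(\delta_{pr}\delta_{qs}-U_{ps}\overline{U}_{rq})$ is wrong, and it breaks the computation as written. Take an orthonormal basis of $\mathfrak{u}(n)$ for $-\mathrm{Tr}(XY)$. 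Then $\sum_a X_a\otimes X_a=-\sum_{p,q}E_{pq}\otimes E_{qp}$ and $\overline{(X_a)_{ks}}=-(X_a)_{sk}$, which give
\begin{equation*}
\Gamma(U_{pq},\overline{U}_{rs})=\delta_{qs}\sum_{i}U_{pi}\overline{U}_{ri}=\delta_{pr}\delta_{qs}.
\end{equation*}
There is no correction term. Your formulas $\Delta U_{pq}=-nU_{pq}$ and $\Gamma(U_{pq},U_{rs})=-U_{ps}U_{rq}$ are correct, with $c=1$.

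Your version of $\Gamma(U,\overline{U})$ is incompatible with $UU^*=I$: applying $\Delta$ to $\sum_q|U_{pq}|^2=1$ gives $-2c\neq 0$. It does damage in both places you use it.
\begin{itemize}
\item In the drift, the ``further multiple of $\Lambda_j$'' you mention is exactly the error. It yields $\Delta\Lambda_{j,\alpha\beta}=2c\,(n_j\delta_{\alpha\beta}-(n+1)\Lambda_{j,\alpha\beta})$. No choice of $c$ fixes this, because the ratio $n_j:(n+1)$ inside the drift is already wrong.
\item In the second-order part, the extra term contributes $-c\,\overline{(Z_jZ_j^{T})_{\beta\delta}}\,(Z_\ell Z_\ell^{T})_{\gamma\alpha}$ and its mirror image. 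This is not a function of $\Lambda$ at all, so the computation would not close.
\end{itemize}
Your verbal description of that second-order step tacitly uses the correct formula, not the one you wrote down.

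\textbf{The fix.} With the corrected $\Gamma$, everything goes through exactly as you sketch, and no further bookkeeping is needed. The drift $\Delta\Lambda_{j,\alpha\beta}=2(n_j\delta_{\alpha\beta}-n\Lambda_{j,\alpha\beta})$ comes solely from the eigenvalue $-n$ and the $\delta$-term. For the second-order part,
\begin{equation*}
\Gamma(\Lambda_{j,\alpha\beta},\Lambda_{\ell,\gamma\delta})=\delta_{j\ell}\bigl(\delta_{\alpha\delta}\Lambda_{j,\gamma\beta}+\delta_{\beta\gamma}\Lambda_{j,\alpha\delta}\bigr)-\Lambda_{j,\gamma\beta}\Lambda_{\ell,\alpha\delta}-\Lambda_{j,\alpha\delta}\Lambda_{\ell,\gamma\beta}.
\end{equation*}
The $\delta$'s come from $\Gamma(Z,\overline{Z})$ and $\Gamma(\overline{Z},Z)$. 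The quadratic terms come from $\Gamma(Z,Z)$ and $\Gamma(\overline{Z},\overline{Z})$.

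Plug these into $\Delta(F\circ\Lambda)=\sum\partial F\,\Delta\Lambda+\sum\partial^2F\,\Gamma(\Lambda,\Lambda)$ and symmetrise the $j=\ell$ terms over $(\alpha\beta)\leftrightarrow(\gamma\delta)$. This gives precisely the stated operator, for the metric $-\mathrm{Tr}(XY)$ with $c=1$. This is the right normalisation; the Killing form itself is degenerate on the centre of $\mathfrak{u}(n)$. Note that $UU^*=I$ enters only through $\Gamma(U,\overline{U})$, not later when collapsing block sums.

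\textbf{What your route buys.} Once repaired, your argument is purely algebraic. It needs neither the polar decomposition nor the polar-set argument. It holds on all of $\mathbf{U}(n)$ rather than only on $\mathcal{D}_m$. It also pins down the normalisation of $\Delta_{\mathbf{U}(n)}$ unambiguously, which the paper's stochastic outline, with its factors $\tfrac12$ and $2$, leaves loose.
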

\begin{proof}
We will only give an outline of the proof for the general case.
Let $(U(t))_{t\geq 0}$ be a unitary Brownian motion starting in $\mathcal{D}_m$, i.e. a diffusion with generator $\frac{1}{2}\Delta_{\mathbf{U}(n)}$.
Since the complement of $\mathcal{D}_m$ is a polar set for unitary Brownian motion, $(U(t))_{t\geq 0}$ will stay in $\mathcal{D}_m$ for all time with probability one.
Using the polar decomposition for rectangular matrices one can show that the process $(\Lambda (t))_{t\geq 0}$, with $\Lambda (t):=p_m(U(t))$ for $t\geq 0$, satisfies the stochastic differential equation
\begin{equation*}
    d\Lambda_j =\Lambda_j^{\frac{1}{2}}\sum_{\ell =1;\ell\neq j}^{k+1}d(\gamma_{\ell j})^*\Lambda_{\ell}^{\frac{1}{2}}
    +\sum_{\ell =1;\ell\neq j}^{k+1}\Lambda_{\ell}^{\frac{1}{2}}d\gamma_{\ell j}\Lambda_{j}^{\frac{1}{2}}
    +2(n_j I_m-n\Lambda_j)dt\textrm{ for }1\leq j\leq k+1.
\end{equation*}
The corresponding generator is exactly $\Delta^{R,m}_{F_{d_1,\dots ,d_{k}}(\mathbb{C}^n)}$.
One now uses that the generator $\mathcal{G}$ of $(p_m(U(t)))_{t\geq 0}$ satisfies 
\begin{equation*}
    \Delta_{\mathbf{U}(n)}(f\circ p_m) =(2\mathcal{G}f)\circ p_m\textrm{ for } f\in\mathcal{C}^{\infty} (\mathcal{D}_m)
\end{equation*}
by Itô's formula, and equation \eqref{eq:Laplce-Beltrami-on-partial-flag} to conclude.
\end{proof}

\begin{remark}
    For the \emph{quaternionic full flag manifold} one can define squared radial coordinates in a similar way.
    The radial part of the Laplace--Beltrami operator in this case can be shown to be a Jacobi operator on the simplex of index $(3/2,\dots ,3/2)$ \cite[Theorem 2.6.]{baudoin2025quaternionic}.
\end{remark}

We see that the radial part of the Laplace--Beltrami operator on $F_{d_1,\dots ,d_k}(\mathbb{C}^n)$ is a Hermitian Jacobi operator on the simplex of index $(n_1-m/2,\dots ,n_{k+1}-m/2)$.

\begin{definition}\label{def:matrix-Jacobi-process-simplex}
    Let $\kappa =(\kappa_1,\dots ,\kappa_{k+1})$ be a multi-index such that $\kappa_j >m/2 -1$ for $1\leq j\leq k+1$.
    The \emph{Jacobi operator} of index $\kappa$ on $\Sigma^m_k$ is defined by
    \begin{align*}
            \mathcal{G}_{\kappa}^m :=&\sum_{j=1}^{k}\sum_{\alpha ,\beta ,\gamma ,\delta =1}^m (I_m-\Lambda_j)_{\alpha\delta}\Lambda_{j,\gamma\beta} \frac{\partial^2}{\partial\Lambda_{j,\alpha\beta}\partial \Lambda_{j,\gamma\delta}} \\ 
            &-\frac{1}{2}\sum_{1\leq j\neq\ell\leq k}\sum_{\alpha ,\beta ,\gamma ,\delta =1}^m (\Lambda_{j,\alpha\delta}\Lambda_{\ell ,\gamma\beta} +\Lambda_{j,\gamma\beta}\Lambda_{\ell ,\alpha\delta})\frac{\partial^2}{\partial\Lambda_{j,\alpha\beta}\partial\Lambda_{\ell ,\gamma\delta}}\\
            & +\sum_{j=1}^{k}\sum_{\alpha ,\beta =1}^m\left(\left(\kappa_j +\frac{m}{2}\right) \delta_{\alpha\beta}-\left(|\kappa |+\frac{(k+1)m}{2}\right)\Lambda_{j,\alpha\beta}\right)\frac{\partial}{\partial\Lambda_{j,\alpha\beta}} ,
    \end{align*}
    where we used the notation $|\kappa |:=\kappa_1 +\dots +\kappa_{k+1}$.
    We will also use the terms \emph{Jacobi operator on the simplex of Hermitian matrices} of index $\kappa$ and \emph{Hermitian Jacobi operator on the simplex} of index $\kappa$ to refer to $\mathcal{G}_{\kappa}^m$.
\end{definition}

\begin{remark}
    The Hermitian Jacobi operators on the simplex reduce to the Jacobi operators on the simplex when $m=1$ and to the Hermitian Jacobi operators when $k=1$.
\end{remark}

Let $\kappa :=(\kappa_1 ,\dots ,\kappa_{k+1})$ be a multi-index such that $\kappa_j >m/2 -1$ for any $1\leq j\leq k+1$.
For symmetry reasons, it is sometimes useful to lift Jacobi operators in $\Sigma_n^m$ to operators in the space
\begin{align*}
    \mathcal{T}^m_{k+1} :=\left\{ (\Lambda_1,\dots ,\Lambda_{k+1})\in \mathrm{Her}(m)^{k+1}\Bigm| \sum_{j=1}^{k+1}\Lambda_j =I_m, \Lambda_j\geq 0 \textrm{ for } 1\leq j\leq k+1\right\} .
\end{align*}
It is easy to verify that the operator $\mathcal{G}_{\kappa}^m$ acts on functions depending on $(\Lambda_1,\dots ,\Lambda_k ,\Lambda_{k+1})$, where $\Lambda_{k+1} :=I_m-\sum_{u=1}^{k}\Lambda_u$, as
\begin{align*}
        \hat{\mathcal{G}}_{\kappa}^m :=&\sum_{j=1}^{k+1}\sum_{\alpha ,\beta ,\gamma ,\delta =1}^m (I_m-\Lambda_j)_{\alpha\delta} \Lambda_{j,\gamma\beta}\frac{\partial^2}{\partial\Lambda_{j,\alpha\beta}\partial\Lambda_{j,\gamma\delta}} \\ 
        &-\frac{1}{2}\sum_{1\leq j\neq\ell\leq k+1}\sum_{\alpha ,\beta ,\gamma ,\delta =1}^{m} (\Lambda_{j,\gamma\beta} \Lambda_{\ell ,\alpha\delta} +\Lambda_{j,\alpha\delta} \Lambda_{\ell ,\gamma\beta})\frac{\partial^2}{\partial\Lambda_{j,\alpha\beta} \partial\Lambda_{\ell ,\gamma\delta}}\\
        & +\sum_{j=1}^{k+1}\sum_{\alpha ,\beta =1}^m\left(\left(\kappa_j +\frac{m}{2}\right) \delta_{\alpha\beta}-\left(|\kappa |+\frac{(k+1)m}{2}\right)\Lambda_{j,\alpha\beta}\right)\frac{\partial}{\partial\Lambda_{j,\alpha\beta}}.  
\end{align*}
And the other way around, the operator $\hat{\mathcal{G}}_{\kappa}^m$ acts on functions depending only on $(\Lambda_1,\dots ,\Lambda_{k})$ as $\mathcal{G}_{\kappa}^m$.

\subsection{Invariant measure}
Now we will determine the invariant measure of the Hermitian Jacobi operator on the simplex, the form of which one can for example guess from the corresponding expressions of the Jacobi process on the simplex \cite[\textsection 2.2]{baudoin2025fullflag} and the matrix Jacobi process \cite[Remark 8.1.8.]{Baudoin2024-is}.
The next theorem was already noted in \cite{Songzi-Li2019}.

\begin{theorem}
    The Jacobi operator $\mathcal{G}_{\kappa}^m$ is symmetric with respect to the measure, whose density with respect to the Lebesgue measure on $\Sigma_{n}^m$ is given by
    \begin{align}\label{eq:matrix-Dirichlet-distribution}
        W^{(\kappa)}_m(\Lambda_1 ,\dots ,\Lambda_{k}) :=C_{\kappa}\left(\prod_{j=1}^{k}\det (\Lambda_j)^{\kappa_j -m/2}\right)\det\left( I_m-\sum_{j=1}^k\Lambda_j\right)^{\kappa_{k+1}  -m/2},
    \end{align}
    where $\kappa =(\kappa_1,\dots ,\kappa_{k+1})$ are such that $\kappa_j> m/2-1$.
    Explicitly, the operator $\mathcal{G}_{\kappa}^m$ satisfies
    \begin{align}\label{eq:def-symmetric-weight}
        \int_{\Sigma_k^m} (\mathcal{G}_{\kappa}^m f)g W^{(\kappa)}_m d\Lambda
        =\int_{\Sigma_k^m} f(\mathcal{G}_{\kappa}^mg) W^{(\kappa)}_m d\Lambda
    \end{align}
    for all smooth and compactly supported functions $f,g$.
\end{theorem}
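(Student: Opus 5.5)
The plan is to write $\mathcal{G}_{\kappa}^m$ in divergence form with respect to the density $W:=W^{(\kappa)}_m$. That is, I will show that
\begin{equation*}
    \mathcal{G}_{\kappa}^m f=\frac{1}{W}\sum_{j,\ell=1}^k\sum_{\alpha,\beta,\gamma,\delta=1}^m\frac{\partial}{\partial\Lambda_{\ell,\gamma\delta}}\left(W\,A^{j\ell}_{\alpha\beta,\gamma\delta}\frac{\partial f}{\partial\Lambda_{j,\alpha\beta}}\right),
\end{equation*}
where $A^{jj}_{\alpha\beta,\gamma\delta}=(I_m-\Lambda_j)_{\alpha\delta}\Lambda_{j,\gamma\beta}$ and $A^{j\ell}_{\alpha\beta,\gamma\delta}=-\tfrac12(\Lambda_{j,\alpha\delta}\Lambda_{\ell,\gamma\beta}+\Lambda_{j,\gamma\beta}\Lambda_{\ell,\alpha\delta})$ for $j\neq\ell$ are the diffusion coefficients read off from Definition \ref{def:matrix-Jacobi-process-simplex}.

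First I check that this coefficient matrix is symmetric under $(j,\alpha\beta)\leftrightarrow(\ell,\gamma\delta)$, so that the pairing is well defined. For the diagonal blocks this is the observation made after \eqref{eq:HO-operator}: we use $(I-\Lambda_j)$ and $\Lambda_j$ Hermitian together with the real structure, treating $\Lambda_{\alpha\beta}$ and $\Lambda_{\beta\alpha}=\overline{\Lambda_{\alpha\beta}}$ consistently as in that formula. Once symmetry is established, integration by parts for smooth compactly supported $f,g$ in the interior gives
\begin{equation*}
    \int(\mathcal{G}f)gW\,d\Lambda=-\int\sum A^{j\ell}_{\alpha\beta,\gamma\delta}\,\partial_{j,\alpha\beta}f\,\partial_{\ell,\gamma\delta}g\,W\,d\Lambda .
\end{equation*}
This expression is symmetric in $f,g$, which proves \eqref{eq:def-symmetric-weight}.

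The main computation is to verify that the first-order part of the divergence form equals the drift in the definition. Expanding, the first-order coefficient of $\partial_{j,\alpha\beta}$ is
\begin{equation*}
    \sum_{\ell,\gamma,\delta}\left(\partial_{\ell,\gamma\delta}A^{j\ell}_{\alpha\beta,\gamma\delta}+A^{j\ell}_{\alpha\beta,\gamma\delta}\,\partial_{\ell,\gamma\delta}\log W\right).
\end{equation*}
For this I will use the matrix derivative identities $\partial_{\Lambda_{\gamma\delta}}\log\det\Lambda=(\Lambda^{-1})_{\delta\gamma}$ and, for $\Lambda_{k+1}=I_m-\sum_{u}\Lambda_u$, $\partial_{\Lambda_{\ell,\gamma\delta}}\log\det\Lambda_{k+1}=-(\Lambda_{k+1}^{-1})_{\delta\gamma}$. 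Contracting with $A^{j\ell}$ then collapses the index sums into matrix products. For example,
\begin{equation*}
    \sum_{\gamma\delta}(I-\Lambda_j)_{\alpha\delta}\Lambda_{j,\gamma\beta}(\Lambda_j^{-1})_{\delta\gamma}=\big((I-\Lambda_j)\Lambda_j^{-1}\Lambda_j\big)_{\alpha\beta}=(I-\Lambda_j)_{\alpha\beta}.
\end{equation*}
Multiplying by $\kappa_j-m/2$ gives $(\kappa_j-m/2)(I-\Lambda_j)_{\alpha\beta}$. The cross terms with $\Lambda_\ell^{-1}$ give $-(\kappa_\ell-m/2)\Lambda_{j,\alpha\beta}$ for $\ell\ne j$. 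The $\Lambda_{k+1}$ terms combine, using $\sum_{\ell}A^{j\ell}$ contracted with $\Lambda_{k+1}^{-1}$ and the relation $I-\Lambda_j=\sum_{\ell\neq j}\Lambda_\ell+\Lambda_{k+1}$, into $-(\kappa_{k+1}-m/2)\Lambda_{j,\alpha\beta}$. It is cleanest to work in the lifted picture $\hat{\mathcal{G}}^m_\kappa$ on $\mathcal{T}^m_{k+1}$, where all $k+1$ blocks play symmetric roles.

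The derivative of the coefficients contributes terms like $\sum_{\gamma}\partial_{\gamma\delta}\Lambda_{\gamma\beta}=m\,\delta_{\delta\beta}$, which produce $m\delta_{\alpha\beta}-(m+1)\Lambda_{j,\alpha\beta}$ from the diagonal block and $-\tfrac{m}{2}\Lambda_{j,\alpha\beta}$-type terms from each off-diagonal block. Adding these, I expect the total to be
\begin{equation*}
    (\kappa_j+m/2)\delta_{\alpha\beta}-\big(|\kappa|+(k+1)m/2\big)\Lambda_{j,\alpha\beta},
\end{equation*}
matching the definition. The main obstacle is this bookkeeping of index conventions and factors of $m$ and $1/2$, in particular whether each off-diagonal pair is counted twice. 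I will cross-check the constants against the known case $k=1$, where $\mathcal{G}^m_\kappa$ reduces to $\mathcal{L}^m$ with Beta weight \eqref{eq:complex-matrix-variate-Beta-distribution}, and the case $m=1$, where it reduces to the Dirichlet measure of Section \ref{sec:scalar-Jacobi-simplex}.

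Finally, the boundary terms vanish because $f,g$ are compactly supported in the interior of $\Sigma^m_k$; the assumption $\kappa_j>m/2-1$ only ensures that $W$ is integrable.
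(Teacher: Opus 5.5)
Your strategy is the paper's: integrate the second-order part by parts against $W^{(\kappa)}_m$, check that the first-order terms this produces reproduce the drift, and read off symmetry from the remaining Dirichlet form. The paper, crucially, treats each diagonal second-order term together with its copy under $(\alpha,\delta)\leftrightarrow(\gamma,\beta)$. That is exactly where your proposal goes wrong.

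\textbf{The diagonal coefficient is not symmetric.} The coefficient you read off, $A^{jj}_{\alpha\beta,\gamma\delta}=(I_m-\Lambda_j)_{\alpha\delta}\Lambda_{j,\gamma\beta}$, does not satisfy the symmetry you need: $A^{jj}_{\alpha\beta,\gamma\delta}-A^{jj}_{\gamma\delta,\alpha\beta}=\delta_{\alpha\delta}\Lambda_{j,\gamma\beta}-\delta_{\gamma\beta}\Lambda_{j,\alpha\delta}$, and Hermitian symmetry does not help. The remark after \eqref{eq:HO-operator} concerns $\mathcal{L}$, whose coefficient is already written as a term plus its swap. $\mathcal{G}^m_\kappa$ contains only one of the two.

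You must replace $A^{jj}$ by $\frac12\big((I_m-\Lambda_j)_{\alpha\delta}\Lambda_{j,\gamma\beta}+(I_m-\Lambda_j)_{\gamma\beta}\Lambda_{j,\alpha\delta}\big)$, which gives the same second-order operator. This is not cosmetic. With the unsymmetrised $A^{jj}$, the $\Lambda_{k+1}$-terms in the drift of block $j$ come out as $-(\kappa_{k+1}-\frac m2)\big(\Lambda_j+\frac12[\Lambda_{k+1}^{-1},\Lambda_j]\big)$. So for $k\geq 2$ your divergence form is not $\mathcal{G}^m_\kappa$ on general $f$.

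With the symmetrised coefficient, the diagonal and off-diagonal contractions with $\Lambda_{k+1}^{-1}$ add up to $-\frac12(\kappa_{k+1}-\frac m2)\big(\Lambda_{k+1}\Lambda_{k+1}^{-1}\Lambda_j+\Lambda_j\Lambda_{k+1}^{-1}\Lambda_{k+1}\big)=-(\kappa_{k+1}-\frac m2)\Lambda_j$. This uses $I_m-\Lambda_j-\sum_{\ell\neq j,\ell\leq k}\Lambda_\ell=\Lambda_{k+1}$.

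Your planned cross-checks at $k=1$ and $m=1$ cannot catch this, because the commutator vanishes in both cases. The residual term is also an infinitesimal simultaneous conjugation, so it kills $\mathbf{U}(m)$-invariant functions. The theorem, however, is stated for all smooth compactly supported $f,g$.

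\textbf{The constants are off.} Differentiating the coefficients gives $m\delta_{\alpha\beta}-2m\Lambda_{j,\alpha\beta}$ from the diagonal block, not $-(m+1)\Lambda_{j,\alpha\beta}$. Each off-diagonal block gives $-m\Lambda_{j,\alpha\beta}$: both terms of $A^{j\ell}$ contribute $-\frac m2\Lambda_{j,\alpha\beta}$, and each ordered pair $(j,\ell)$ occurs once. With your numbers the total coefficient of $\Lambda_{j,\alpha\beta}$ would be $|\kappa|+1$, which does not match.

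With the correct numbers, the coefficient-derivative part is $m\delta_{\alpha\beta}-(k+1)m\Lambda_{j,\alpha\beta}$. The weight part is $(\kappa_j-\frac m2)\delta_{\alpha\beta}-\big(|\kappa|-\frac{(k+1)m}{2}\big)\Lambda_{j,\alpha\beta}$. Their sum is exactly the drift in Definition \ref{def:matrix-Jacobi-process-simplex}.

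With these two corrections your argument goes through. Do the integration by parts in the free coordinates $\Lambda_1,\dots,\Lambda_k$ of $\Sigma^m_k$, where $\Lambda_{k+1}$ enters only through $W^{(\kappa)}_m$. On the constrained set $\mathcal{T}^m_{k+1}$ the $k+1$ blocks are not independent coordinates, so integration by parts there needs extra care.
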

\begin{proof}
    We will show \eqref{eq:def-symmetric-weight}, by using integration by parts on the second order terms on the left hand side, that the left hand side is symmetric in $f$ and $g$.
    Meaning that it has to be equal to the right hand side, since this is the left hand side with the roles of $f$ and $g$ interchanged.

    Because the calculation is long and tedious we will treat all terms of the generator $\mathcal{G}_{\kappa}^m$ separately.
    The term
    \begin{align*}
        I_1:=\int_{\Sigma^m_k}\sum_{j=1}^k\sum_{\alpha ,\beta ,\gamma ,\delta=1}^m (I_m-\Lambda_j)_{\alpha\delta}\Lambda_{j,\gamma\beta}\frac{\partial^2 f}{\partial\Lambda_{j,\gamma\delta}\partial\Lambda_{j,\alpha\beta} }gW_m^{(\kappa)} d\Lambda
    \end{align*}gives
    \begin{align*}
        I_1
        =&-\sum_{j=1}^k\sum_{\alpha ,\beta ,\gamma ,\delta=1}^m \int_{\Sigma^m_k}\bigg( (I_m-\Lambda_j)_{\alpha\delta}\Lambda_{j,\gamma\beta}\frac{\partial f}{\partial\Lambda_{j,\alpha\beta} }\frac{\partial g}{\partial\Lambda_{j,\gamma\delta}}\\
        &\qquad\qquad\qquad\quad+ \left(\kappa_j -\frac{m}{2}\right)(I_m-\Lambda_j)_{\alpha\delta} \Lambda_{j,\gamma\beta}\Lambda_{j,\delta\gamma}^{-1}\frac{\partial f}{\partial\Lambda_{j,\alpha\beta}} g \\
        &\qquad\qquad\qquad\quad +\left(\kappa_{k+1} -\frac{m}{2} \right)(I_m-\Lambda_j)_{\alpha\delta} \Lambda_{j,\gamma\beta}\left(I_m -\sum_{i=1}^k\Lambda_{i}\right)^{-1}_{\delta\gamma}\frac{\partial f}{\partial\Lambda_{j,\alpha\beta}} g \\
        &\qquad\qquad\qquad\quad +(-\delta_{\alpha\gamma}\Lambda_{j,\gamma\beta} +\delta_{\beta\delta} (I_m-\Lambda_j)_{\alpha\delta})\frac{\partial f}{\partial\Lambda_{j,\alpha\beta}}g\bigg) W_{m}^{(\kappa )} d\Lambda ,
    \end{align*}
    which after simplification reduces to
    \begin{align*}
        I_1=&-\sum_{j=1}^k\sum_{\alpha ,\beta ,\gamma ,\delta =1}^m\int_{\Sigma^m_k} (I_m-\Lambda_j)_{\alpha\delta}\Lambda_{j,\gamma\beta}\frac{\partial f}{\partial\Lambda_{j,\alpha\beta} }\frac{\partial g}{\partial\Lambda_{j,\gamma\delta}}W_m^{(\kappa)}  d\Lambda \\
        &-\sum_{j=1}^{k}\sum_{\alpha ,\beta =1}^m\left(\int_{\Sigma_k^m} \left(\kappa_j +\frac{m}{2}\right)(I_m-\Lambda_j)_{\alpha\beta} \frac{\partial f}{\partial\Lambda_{j,\alpha\beta}} g W^{(\kappa)}_m d\Lambda 
        +\int_{\Sigma_{k}^m} m\Lambda_{j,\alpha\beta}\frac{\partial f}{\partial\Lambda_{\alpha\beta}}gW_m^{(\kappa)}d\Lambda\right)\\
        &-\sum_{j=1}^{k}\sum_{\alpha ,\beta ,\gamma ,\delta =1}^m\int_{\Sigma_k^m}\left(\kappa_{k+1} -\frac{m}{2} \right)(I_m-\Lambda_j)_{\alpha\delta} \Lambda_{j,\gamma\beta}\left(I_m -\sum_{i=1}^k\Lambda_{i}\right)^{-1}_{\delta\gamma}\frac{\partial f}{\partial\Lambda_{j,\alpha\beta}} gW_m^{(\kappa)} d\Lambda
    \end{align*}
    and we get the same expression for the term with the roles of $\alpha ,\delta$ interchanged with $\gamma ,\beta$ respectively in the coefficients by using symmetry of second derivatives.
    For the second term, we include some terms of $I_1$
    \begin{align*}
        I_2 :=&\int_{\Sigma^m_k} \sum_{1\leq j\neq\ell\leq k}^k\sum_{\alpha ,\beta ,\gamma ,\delta=1}^m \Lambda_{j, \alpha\delta}\Lambda_{\ell ,\gamma\beta}\frac{\partial^2 f}{\partial\Lambda_{j,\gamma\delta}\partial\Lambda_{\ell ,\alpha\beta} }gW_m^{(\kappa)} d\Lambda 
        -\sum_{j=1}^k\sum_{\alpha ,\beta =1}^m\int_{\Sigma_{k}^m} m\Lambda_{j, \alpha\beta}W_m^{(\kappa)}d\Lambda\\
        &-\sum_{j=1}^{k}\sum_{\alpha ,\beta ,\gamma ,\delta =1}^m\int_{\Sigma_k^m}\left(\kappa_{k+1} -\frac{m}{2} \right)(I_m-\Lambda_j)_{\alpha\delta} \Lambda_{j,\gamma\beta}\left(I_m -\sum_{i=1}^k\Lambda_{i}\right)^{-1}_{\delta\gamma}\frac{\partial f}{\partial\Lambda_{j,\alpha\beta}} gW_m^{(\kappa)} d\Lambda
    \end{align*}
    we get
    \begin{align*}
        I_2
        =&-\sum_{\alpha,\beta,\gamma,\delta =1}^m\int_{\Sigma_k^m}\bigg(\sum_{1\leq j\neq \ell\leq k}\Lambda_{j,\gamma\beta} \Lambda_{\ell,\alpha\delta}\frac{\partial f}{\partial\Lambda_{\ell,\gamma\delta}}\frac{\partial g}{\partial\Lambda_{j,\alpha\beta}} 
        -\sum_{j=1}^k\Lambda_{j,\alpha\beta} \delta_{\beta\gamma}\frac{\partial f}{\partial\Lambda_{j,\alpha\beta}}g\\
        &\qquad\qquad\quad +\sum_{1\leq j\neq \ell\leq k}\left(\left(\kappa_j -\frac{m}{2}\right)\Lambda_{j,\gamma\beta} \Lambda_{\ell,\alpha\delta}\Lambda_{j,\beta\alpha}^{-1} +\delta_{\gamma\alpha}\Lambda_{\ell ,\alpha\delta}  \right)\frac{\partial f}{\partial\Lambda_{\ell,\gamma\delta}} g \\
        &\qquad\qquad\quad -\sum_{\ell =1}^k\left(\kappa_{k+1} -\frac{m}{2}\right)\left( I_m-\sum_{i=1}^k\Lambda_{i}\right)_{\gamma\beta} \Lambda_{\ell,\alpha\delta}\left( I_m-\sum_{i=1}^{k}\Lambda_{i}\right)_{\beta\alpha}^{-1}\frac{\partial f}{\partial\Lambda_{\ell,\gamma\delta}} g
        \bigg) W_m^{(\kappa)}d\Lambda \\
        =&\sum_{1\leq j\neq\ell\leq k}\sum_{\alpha,\beta,\gamma,\delta =1}^m \int_{\Sigma_k^m}\Lambda_{j,\gamma\beta} \Lambda_{\ell,\alpha\delta}\frac{\partial f}{\partial\Lambda_{\ell,\gamma\delta}}\frac{\partial g}{\partial\Lambda_{j,\alpha\beta}} W^{(\kappa)}_m d\Lambda  \\
        &+\sum_{j=1}^k\sum_{\gamma,\delta =1}^m\int_{\Sigma_k^m}\left( |\kappa | -\kappa_j +\frac{n}{2}\right) \Lambda_{j,\gamma\delta}\frac{\partial f}{\partial\Lambda_{j,\gamma\delta}} g W^{(\kappa)}_m d\Lambda 
    \end{align*}
    and almost\footnote{The term involving $\frac{\partial f}{\partial\Lambda_{j,\alpha\beta}} g$ will be exactly the same.} the same expression for the term with the roles of $\alpha ,\delta$ interchanged with $\gamma ,\beta$ respectively in the coefficients.
    Note that the terms involving $\frac{\partial f}{\partial\Lambda_{j,\alpha\beta}} g$ exactly cancel the first order terms in $\mathcal{G}_{\kappa}^m$.
    The conclusion now readily follows.
\end{proof}

\begin{remark}
The distribution \eqref{eq:matrix-Dirichlet-distribution} is called the \emph{(type I) complex matrix variate Dirichlet distribution} of index $\kappa +1-m/2$ \cite{gupta2007properties}, see also \cite[Chapter 6]{Gupta2000} for the real case.
As discussed in \cite[\textsection 4.1]{Songzi-Li2019} the normalisation constant $C_{\kappa}$ can be calculated explicitly, it is equal to
\begin{align*}
    C_{\kappa} =\pi^{\frac{1}{2}km(m-1)}\frac{\prod_{j=1}^{k+1}\prod_{\ell =1}^{m}\Gamma (\kappa_j +2m/3-1+\ell )}{\prod_{j=1}^{m}\Gamma (|\kappa |+n/2 -k-1+m-j)} ,
\end{align*}
where $\Gamma$ is the Euler $\Gamma$-function.
\end{remark}

Note that if $(\Lambda_1,\dots ,\Lambda_k )$ is a random variable with density $W_m^{(\kappa )}$, then $\Lambda_j$ has a marginal density given by
\begin{align*}
    B_m^{(\kappa_j ,|\kappa |)} =C_{\kappa_j, |\kappa |}\det (\Lambda_j )^{\kappa_j -m/2}\det (I_m -\Lambda_j )^{|\kappa |-\kappa_j -m/2}
\end{align*}
for some constant $C_{\kappa_j ,|\kappa |}>0$, see \cite[Corollary 6.3.2.1.]{Gupta2000} for the real case.
This is the complex matrix variate Beta distribution of index $(\kappa_j , |\kappa |-\kappa_j)$ defined in equation \eqref{eq:complex-matrix-variate-Beta-distribution}.

From \cite[Theorem 2.4.]{gupta2007properties} it follows that the complex matrix variate Dirichlet distribution splits into a product of complex matrix variate Beta distributions in the right coordinates.
We record this fact here for completeness.

\begin{proposition}\label{prop:splitting-of-measure}
    Let $\Lambda :=(\Lambda_1,\dots ,\Lambda_k)$ have a matrix variate Dirichlet distribution of index $\kappa +1-m/2$.
    Define $b_j :=\sum_{\ell=j+1}^{k+1}\kappa_{\ell}$ for $1\leq j\leq k$ and
    \begin{equation*}
        \Xi_j :=\left( I_m-\sum_{u=1}^{j-1}\Lambda_u\right)^{-\frac{1}{2}}\Lambda_j\left( I_m-\sum_{u=1}^{j-1}\Lambda_u\right)^{-\frac{1}{2}}\textrm{ for } 1\leq j\leq k
    \end{equation*}
    as before.
    The random variables $(\Xi_j)_{j=1}^k$ are distributed according to independent complex matrix variate Beta distributions, more precisely
    \begin{equation*}
        (\Xi_1,\dots ,\Xi_k)\sim B_m^{(\kappa_1 ,b_1)}(d\Xi_1)\cdots B_m^{(\kappa_k ,b_k)}(d\Xi_k) .
    \end{equation*}
    Furthermore, $\Xi_j$ is independent of $\Lambda_1,\dots ,\Lambda_{j-1}$ for $1\leq j\leq k$.
    
    In terms of the measure this translates to
    \begin{equation*}
    \begin{split}
        W_m^{(\kappa_1,\dots ,\kappa_k)}(d\Lambda_1 ,\dots ,d\Lambda_k)
        =&\prod_{j=1}^k B_m^{(\kappa_j ,b_j)}\left(\left(\det\left( I_m-\sum_{u=1}^{j-1}\Lambda_u\right)\right)^{-m}d\Lambda_j\right) \\
        =&\prod_{j=1}^k B_m^{(\kappa_j ,b_j)}\left(d\Xi_j\right) .
    \end{split}
    \end{equation*}
\end{proposition}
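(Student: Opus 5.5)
The plan is a direct change of variables from $(\Lambda_1,\dots,\Lambda_k)$ to $(\Xi_1,\dots,\Xi_k)$, carried out inductively in $j$, together with the Jacobian formula for congruence transforms $\Lambda\mapsto C\Lambda C^*$ on $\mathrm{Her}(m)$ (Jacobian $|\det C|^{2m}$, see \cite[Theorem 3.5]{mathai1997jacobians}, as already used in lemma \ref{lemma:reducing-variables}). Write $S_j:=I_m-\sum_{u=1}^{j}\Lambda_u$, with $S_0=I_m$, so that $\Lambda_j=S_{j-1}^{1/2}\Xi_jS_{j-1}^{1/2}$. The basic algebraic identity is
\begin{equation*}
    S_j=S_{j-1}^{1/2}(I_m-\Xi_j)S_{j-1}^{1/2},
\end{equation*}
and hence $\det S_j=\det S_{j-1}\det(I_m-\Xi_j)=\prod_{u\le j}\det(I_m-\Xi_u)$. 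Since $S_{j-1}$ depends only on $\Lambda_1,\dots,\Lambda_{j-1}$, the map is triangular. For fixed earlier variables, the map $\Lambda_j\mapsto\Xi_j$ is a congruence with Jacobian $\det(S_{j-1})^{-m}$. It sends $\{\Lambda_j\ge0,\ \Lambda_j\le S_{j-1}\}$ bijectively onto $\{0\le\Xi_j\le I_m\}$, so the simplex $\Sigma_k^m$ is mapped onto the product of $k$ copies of the matrix interval $[0,I_m]$.

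Next I substitute into the density \eqref{eq:matrix-Dirichlet-distribution}. We have $\det\Lambda_j=\det(S_{j-1})\det\Xi_j$, $\det S_k=\prod_{u\le k}\det(I_m-\Xi_u)$, and $d\Lambda_1\cdots d\Lambda_k=\prod_j\det(S_{j-1})^{m}d\Xi_j$. Collecting the exponent of $\det(I_m-\Xi_u)$, it comes from $\det(\Lambda_j)^{\kappa_j-m/2}$ for $j>u$, from $\det(S_k)^{\kappa_{k+1}-m/2}$, and from the Jacobians $\det(S_{j-1})^m$ for $j>u$. The total exponent is
\begin{equation*}
    \sum_{j=u+1}^{k}(\kappa_j-m/2+m)+\kappa_{k+1}-m/2 .
\end{equation*}
The factor $\det(\Xi_u)^{\kappa_u-m/2}$ appears directly. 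Comparing with \eqref{eq:complex-matrix-variate-Beta-distribution}, the density factorises as $\prod_u\det(\Xi_u)^{\kappa_u-m/2}\det(I_m-\Xi_u)^{b_u-m/2}$ up to constants. Here $b_u$ must be read with the index convention of the statement, i.e.\ a Dirichlet distribution of index $\kappa+1-m/2$ versus a Beta of index $(\kappa_j,b_j)$. I expect the extra $m/2$ shifts to be absorbed exactly by the convention $\kappa\leftrightarrow\kappa+1-m/2$ relating the densities to the named distributions. I would check this carefully against \cite[Theorem 2.4]{gupta2007properties}, and treat any constant offsets as bookkeeping.

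A product density on a product domain gives mutual independence of $\Xi_1,\dots,\Xi_k$ with the stated Beta marginals, and the normalising constants then match automatically. For the claim that $\Xi_j$ is independent of $\Lambda_1,\dots,\Lambda_{j-1}$, I note that $(\Lambda_1,\dots,\Lambda_{j-1})$ is a measurable function of $(\Xi_1,\dots,\Xi_{j-1})$, by inverting the triangular map recursively. Independence of $\Xi_j$ from $(\Xi_1,\dots,\Xi_{j-1})$ therefore suffices. The two displayed measure identities are just the restatement of the Jacobian computation, written one factor at a time.

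The main obstacle is the exponent bookkeeping: verifying that the accumulated powers of $\det(I_m-\Xi_u)$ produce exactly $b_u$ under the paper's normalisation of the Beta and Dirichlet indices. Everything else is routine. I would first do the case $k=2$ explicitly to fix the conventions, and then induct on $k$ by integrating out or peeling off $\Lambda_1$.
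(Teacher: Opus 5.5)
Your change of variables is the right mechanism. The triangular congruence $\Lambda_j=S_{j-1}^{1/2}\Xi_jS_{j-1}^{1/2}$ with Jacobian $\det(S_{j-1})^{m}$ is the standard proof of \cite[Theorem 2.4]{gupta2007properties}, and the paper itself gives nothing beyond that citation. Your identity $S_j=S_{j-1}^{1/2}(I_m-\Xi_j)S_{j-1}^{1/2}$, the description of the image of $\Sigma^m_k$, and the independence argument are all fine. The gap is the step you postponed as ``bookkeeping'': the exponent you wrote down does not reduce to $b_u-m/2$. Simplifying it,
\begin{equation*}
    \sum_{j=u+1}^{k}\left(\kappa_j-\frac{m}{2}+m\right)+\kappa_{k+1}-\frac{m}{2}
    =b_u+(k-u)\frac{m}{2}-\frac{m}{2},
\end{equation*}
because each intermediate $\Lambda_j$ with $u<j\leq k$ contributes $-m/2$ through its density exponent but $+m$ through the Jacobian. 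With \eqref{eq:matrix-Dirichlet-distribution} and \eqref{eq:complex-matrix-variate-Beta-distribution} as written, your computation therefore proves $\Xi_u\sim B_m^{(\kappa_u,\,b_u+(k-u)m/2)}$. This agrees with the claimed $B_m^{(\kappa_u,b_u)}$ only for $u=k$, so the statement as written holds for $k=1$ but fails for $k\geq 2$.

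The discrepancy depends on $u$, so no global relabelling of the Dirichlet index (such as $\kappa\leftrightarrow\kappa+1-m/2$) can absorb it. Once the first Beta parameter of $\Xi_u$ is $\kappa_u$, your change of variables forces the second parameters of $\Xi_{u-1}$ and $\Xi_u$ to differ by $\kappa_u+m/2$, whereas $b_{u-1}-b_u=\kappa_u$. A quick check is $m=1$, $k=2$: integrating $\lambda_2$ out of $\lambda_1^{\kappa_1-1/2}\lambda_2^{\kappa_2-1/2}(1-\lambda_1-\lambda_2)^{\kappa_3-1/2}$ gives, up to a constant, $\lambda_1^{\kappa_1-1/2}(1-\lambda_1)^{\kappa_2+\kappa_3}$, i.e.\ $B_1^{(\kappa_1,\kappa_2+\kappa_3+1/2)}$.

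The claim is correct in the parametrisation where both Dirichlet and Beta densities carry exponents (parameter $-\,m$). Writing $a_\ell:=\kappa_\ell+m/2$, one gets $\Xi_u\sim\mathrm{Beta}(a_u,\sum_{\ell>u}a_\ell)$, which in the paper's notation is exactly $B_m^{(\kappa_u,\,b_u+(k-u)m/2)}$. Since the paper only cites \cite[Theorem 2.4]{gupta2007properties}, this translation mismatch is not resolved there either. So do not appeal to conventions; carry your computation through. It yields the independence statements and the measure identities, with the Jacobian factor $\det(I_m-\sum_{u<j}\Lambda_u)^{-m}$ exactly as claimed, but with $b_j$ replaced by $b_j+(k-j)m/2$.
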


\subsection{Orthogonal polynomials}
In this section we will define a family of pairwise orthogonal polynomials invariant under simultaneous conjugation with respect to the matrix-variate Dirichlet measure.
The polynomials can be seen as analogues of the product solutions for product measures.
Furthermore, they might give some insight into explicit expressions for a more complete set of orthogonal polynomials.

\begin{theorem}\label{thm:expression-matrix-Jacobi-simplex}
    Let $k,m\geq 1$ be integers and $\kappa\in\mathbb{R}^{k+1}$ be such that $\kappa_j >m/2-1$ for $1\leq j\leq k+1$.
    For partitions $\tau_1 ,\dots ,\tau_k$ of length $m$, the functions
    \begin{multline}\label{eq:matrix-Jacobi-polynomials-simplex}
        P_{\tau_1,\dots ,\tau_k}^{(\kappa )} (\Lambda_1,\dots ,\Lambda_{k}) :=\prod_{j=1}^k\det\left( I_m-\sum_{u =1}^{j-1}\Lambda_{u}\right)^{\tau_{j,1}} \\
        P_{\tau_{j}}^{( \kappa_{j} ,a_{j} ,2)}\left(\left( I_m-\sum_{u=1}^{j-1}\Lambda_{u}\right)^{-\frac{1}{2}}\Lambda_{j}\left( I_m-\sum_{u=1}^{j-1}\Lambda_{u}\right)^{-\frac{1}{2}}\right) 
    \end{multline}
    with
    \begin{equation*}
        a_j :=2\sum_{\ell =j+1}^{k} \tau_{\ell ,1} +\sum_{\ell =j+1}^{k+1}\kappa_{\ell} ,
    \end{equation*}
    are pairwise orthogonal polynomials with respect to the complex matrix variate Dirichlet measure $W_m^{(\kappa )}$.
    Furthermore, these polynomials are orthogonal to invariant polynomials of absolute degree in the coefficients of $\Lambda_{k}$ strictly less than $\tau_k$.
    
    The polynomials $P_{\tau_1,\dots ,\tau_k}^{(\kappa)}$ given in \eqref{eq:matrix-Jacobi-polynomials-simplex} will be called \emph{product Hermitian Jacobi polynomials on the simplex}.
\end{theorem}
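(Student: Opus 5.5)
We argue by induction on $k$, applying the matrix variate Koornwinder method, proposition \ref{prop:naive-Koornwinders-method}, with the case $k=1$ as base. The splitting of the Dirichlet measure in proposition \ref{prop:splitting-of-measure} provides the independence hypothesis the method needs. For $k=1$ the statement reduces to the orthogonality of the Hermitian Jacobi polynomials $P^{(\kappa_1,\kappa_2,2)}_{\tau_1}$ with respect to $B_m^{(\kappa_1,\kappa_2)}$, which we take from \cite{BALDERRAMA2005486}. For the inductive step we use an outer-variable peeling, i.e.\ we separate off the last block. Set $X=(\Lambda_1,\dots,\Lambda_{k-1})$, $Y=\Lambda_k$ and $\rho(X)=I_m-\sum_{u<k}\Lambda_u$, so that $Z=\Xi_k$. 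By proposition \ref{prop:splitting-of-measure}, $\Xi_k\sim B_m^{(\kappa_k,\kappa_{k+1})}$ is independent of $X$, and both marginals are invariant under conjugation. The weight $\det(\rho(X))^{2\tau_{k,1}}W_X$ is again a complex matrix variate Dirichlet measure in $k-1$ matrices, with last index $\kappa_{k+1}$ replaced by $\kappa_{k+1}+2\tau_{k,1}$. This follows because the density of $X$ is $\prod_{j<k}\det\Lambda_j^{\kappa_j-m/2}\det(\rho)^{\kappa_k+\kappa_{k+1}+m/2\cdot 0-m/2}$ after integrating out $\Lambda_k$. One should double check the exponent: it is $\kappa_k+\kappa_{k+1}+m/2\cdot$ (Jacobian $\det\rho^m$) combined, giving effective last index $\kappa_k+\kappa_{k+1}$. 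By induction the product polynomials in $k-1$ variables for the index $(\kappa_1,\dots,\kappa_{k-1},\kappa_k+\kappa_{k+1}+2\tau_{k,1})$ are pairwise orthogonal for this weight. Unwinding the definitions, their parameters $a_j$ for $j<k$ become exactly $2\sum_{\ell=j+1}^k\tau_{\ell,1}+\sum_{\ell=j+1}^{k+1}\kappa_\ell$. Hence $G_{(\tau',\tau_k)}$ coincides with \eqref{eq:matrix-Jacobi-polynomials-simplex}, with the $k=1$ version of the exponent $M=\tau_{k,1}$, the absolute degree part of proposition \ref{prop:naive-Koornwinders-method}.

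There is a subtlety in the order of peeling. The formula conjugates $\Lambda_j$ by $(I-\sum_{u<j}\Lambda_u)^{-1/2}$, so the innermost variable is $\Lambda_1$. Peeling the last block $\Lambda_k$ first, as above, matches this conjugation, but the inner polynomial in $X$ must itself be the product formula for fewer variables, with $\Lambda_j$ conjugated by its own predecessors. That is consistent, so the induction closes.

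It remains to show that these objects are polynomials. The factor $\det(\rho(X))^{\tau_{k,1}}P_{\tau_k}(\rho^{-1/2}\Lambda_k\rho^{-1/2})$ is a polynomial invariant under simultaneous conjugation by lemma \ref{lemma:product-polynomials-Hermitian}, applied with $A=\rho(X)$ (affine in $X$) and $B=\Lambda_k$. The absolute degree of $P_{\tau_k}$ is $\tau_k$ by its triangularity with respect to the Schur polynomials. The remaining factor is polynomial by induction, so the product is too. The Jacobian bookkeeping in the proof of proposition \ref{prop:naive-Koornwinders-method} then gives pairwise orthogonality. If two tuples differ first in $\tau_k$, the $Z$-integral vanishes; otherwise the $X$-integral vanishes.

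For the final claim, take $Q$ invariant with absolute degree in $\Lambda_k$ strictly less than $\tau_k$. Change variables to $(X,\Xi_k)$ and integrate out $X$ first. By lemma \ref{lemma:reducing-variables} and the definition of absolute degree, the resulting function of $\Xi_k$ is an invariant polynomial of absolute degree $<\tau_k$. This step is the main obstacle, because the substitution $\Lambda_k=\rho^{1/2}\Xi_k\rho^{1/2}$ mixes variables. The factor $\det\rho^{\tau_{k,1}}$ and the $X$-dependent prefactor need care, but remark \ref{rmk:k=2-polynomials} and the fact that conjugation by $\rho^{1/2}$ cannot raise the degree in $\Xi_k$ should suffice. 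Orthogonality of $P_{\tau_k}^{(\kappa_k,\kappa_{k+1},2)}$ to Schur polynomials of lower degree then finishes the proof.
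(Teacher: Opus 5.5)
Your decomposition differs from the paper's, and in a useful way. The paper peels off $\Lambda_1$, with $Z=(I_m-\Lambda_1)^{-\frac12}(\Lambda_2,\dots,\Lambda_k)(I_m-\Lambda_1)^{-\frac12}$. It must then check, via \eqref{eq:polar-decomposition-conjugation}, that the nested conjugations recombine into \eqref{eq:matrix-Jacobi-polynomials-simplex}. It also applies proposition \ref{prop:naive-Koornwinders-method} with a multi-matrix $Y$ but with exponent $\sum_{\ell\ge2}\tau_{\ell,1}$ instead of the total degree. If you peel off $\Lambda_k$, the formula factors literally as a $(k-1)$-block product times the last factor. Since $Y=\Lambda_k$ is a single matrix, the single-matrix clause with exponent $\tau_{k,1}$ applies verbatim. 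Your polynomiality argument and the case split for orthogonality are fine.

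\textbf{The index bookkeeping you flagged is wrong.} You use proposition \ref{prop:splitting-of-measure} only for $j=k$, where it is correct, but your hand computation of $W_X$ is not. With $\Lambda_k=\rho^{\frac12}\Xi_k\rho^{\frac12}$ one has $\det\Lambda_k=\det\rho\,\det\Xi_k$, $\det(\rho-\Lambda_k)=\det\rho\,\det(I_m-\Xi_k)$ and $d\Lambda_k=\det(\rho)^m\,d\Xi_k$. So the power of $\det\rho$ is $(\kappa_k-\frac m2)+(\kappa_{k+1}-\frac m2)+m=\kappa_k+\kappa_{k+1}$, and the merged index is $\kappa_k+\kappa_{k+1}+\frac m2$. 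Your value cannot be right: for $m=1$ and $\kappa_k=\kappa_{k+1}=-\frac25$ it gives exponent $-\frac{13}{10}$, so $W_X$ would not be integrable.

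The induction therefore produces $a_j+(k-j)\frac m2$, not $a_j$, and this is not cosmetic. Take $m=1$, $k=2$. The stated $P^{(\kappa)}_{(1),(0)}$ is proportional to $\lambda_1-\frac{\kappa_1+1/2}{\kappa_1+\kappa_2+\kappa_3+1}$. But $\int\lambda_1\,dW^{(\kappa)}_1=\frac{\kappa_1+1/2}{\kappa_1+\kappa_2+\kappa_3+3/2}$, so it is not orthogonal to the constant $P^{(\kappa)}_{(0),(0)}$. The paper's proof makes the same miscount: it takes $\Xi_j$ to have Beta index $(\kappa_j,\sum_{\ell>j}\kappa_\ell)$, where the correct second index is $\sum_{\ell>j}\kappa_\ell+(k-j)\frac m2$. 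With the correct exponent your induction closes and proves the statement with $a_j=2\sum_{\ell=j+1}^k\tau_{\ell,1}+\sum_{\ell=j+1}^{k+1}\kappa_\ell+(k-j)\frac m2$.

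\textbf{The final claim has a second gap, at the step you called the main obstacle.} Absolute degree in $\Lambda_k$ is defined by averaging the other variables over invariant measures with $\Lambda_k$ held fixed. Your $\widetilde Q(\Xi_k)=\int Q(X,\rho^{\frac12}\Xi_k\rho^{\frac12})P'(X)\det(\rho)^{\tau_{k,1}}\,W_X(dX)$ couples the variables instead. The remark that ``conjugation by $\rho^{1/2}$ cannot raise the degree'' controls only total degree.

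With the paper's definition the claim actually fails. Take $m\ge2$, $k=2$ and $Q=\mathrm{Tr}(\Lambda_1\Lambda_2)-\frac1m\mathrm{Tr}(\Lambda_1)\mathrm{Tr}(\Lambda_2)+1$. Every partial average of $Q$ equals $1$, because $\int\Lambda_1\,d\mu$ is a multiple of $I_m$. So $Q$ has absolute degree $(0,\dots,0)$ in $\Lambda_2$. However $\widetilde Q(\Xi)=a\,\mathrm{Tr}(\Xi)+\mathrm{const}$ with $a=-\frac1{m^2}\int\sum_{i<j}(\lambda_i-\lambda_j)^2\det(I_m-\Lambda_1)\,W_X(d\Lambda_1)<0$, where the $\lambda_i$ are the eigenvalues of $\Lambda_1$. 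Hence $Q$ is not orthogonal to $P^{(\kappa)}_{\tau_1,\tau_2}$ with $\tau_1=0$, $\tau_2=(1,0,\dots,0)$. That polynomial is a nonzero multiple of $\det(I_m-\Lambda_1)P^{(\kappa_2,\kappa_3,2)}_{(1)}(\Xi_2)$, and $P^{(\kappa_2,\kappa_3,2)}_{(1)}$ is orthogonal to constants with nonzero $\mathrm{Tr}$-coefficient.

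What works is to measure degree in $\Lambda_k$ by the $Gl_m(\mathbb{C})$-isotypic types $V_\sigma\otimes\overline{V}_\sigma$ occurring in that variable. Congruence by $\rho^{\frac12}$ preserves each type, so $\widetilde Q$ is a combination of the $s_\sigma$ for the types $\sigma$ of $Q$. If all these are $<\tau_k$, triangularity and orthogonality of $P^{(\kappa_k,\kappa_{k+1},2)}_{\tau_k}$ finish the argument. Remark \ref{rmk:k=2-polynomials} is not the relevant tool here. The paper also asserts this step without proof.
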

\begin{proof}
    The functions $P_{\tau}^{(\kappa )}(\Lambda_1,\dots ,\Lambda_k)$ can be written recursively as follows
    \begin{multline*}
        P_{\tau}^{(\kappa )} (\Lambda_1,\dots ,\Lambda_{k}) =\det (I_m-\Lambda_1 )^{\sum_{\ell =2}^{k}\tau_{\ell ,1} }P_{\tau_1}^{(\kappa_1 ,a_1,2)}(\Lambda_1)\\
        P_{\tau_2,\dots ,\tau_k}^{(\kappa_2,\dots ,\kappa_{k+1})}\left( (I_m-\Lambda_1)^{-\frac{1}{2}}\Lambda_2 (I_m-\Lambda_1)^{-\frac{1}{2}},\dots ,(I_m-\Lambda_1)^{-\frac{1}{2}}\Lambda_{k} (I_m-\Lambda_1)^{-\frac{1}{2}}\right) .
    \end{multline*}
    To see this one can use that $P_{\tau_2,\dots ,\tau_k}^{(\kappa_2,\dots ,\kappa_{k+1})}$ is invariant under simultaneous conjugation with unitary matrices by the induction hypothesis.
    The expressions \eqref{eq:polar-decomposition-conjugation} can be used to show that $P_{\tau}^{(\kappa )}$ is invariant under simultaneous conjugation with unitary matrices and that it agrees with \eqref{eq:matrix-Jacobi-polynomials-simplex}.
    More concretely for the second part, the first relation in \eqref{eq:polar-decomposition-conjugation} shows that
    \begin{align*}
        \left( I_m -\sum_{u=2}^{j-1}\left( I_m-\Lambda_1\right)^{-\frac{1}{2}}\Lambda_{j}\left(I_m-\Lambda_1\right)^{-\frac{1}{2}}\right)^{-\frac{1}{2}}
        =V\left(I_m-\sum_{u=1}^{j-1}\Lambda_u\right)^{-\frac{1}{2}}\left( I_m-\Lambda_1\right)^{\frac{1}{2}}
    \end{align*}
    for $1\leq j\leq k$ and a similar expression when taking the complex conjugate.
    
    So we are done if we can apply proposition \ref{prop:naive-Koornwinders-method} inductively on $k$ with $\rho (\Lambda )=I_m-\Lambda$.
    Note that we need a higher power for the determinant in the statement of \ref{prop:naive-Koornwinders-method}, but this is only needed to ensure that it is in fact a polynomial, and that it is a polynomial is already clear from the expression \eqref{eq:matrix-Jacobi-polynomials-simplex} and lemma \ref{lemma:product-polynomials-Hermitian}.
    First off note that
    \begin{align}\label{eq:neutral-to-the-right}
        \Xi_j:=\left( I_m-\sum_{u=1}^{j-1}\Lambda_u\right)^{-\frac{1}{2}}\Lambda_j\left( I_m-\sum_{u=1}^{j-1}\Lambda_u\right)^{-\frac{1}{2}}
    \end{align}
    is independent of $(\Lambda_1,\dots ,\Lambda_{j-1})$ for $1\leq j\leq k$ and has a complex matrix variate Beta density $W_m^{(\kappa_j ,\sum_{\ell =j+1}^{k+1}\kappa_{\ell})}$, see \cite[Theorem 2.4]{gupta2007properties}.
    In particular it is independent of $\sum_{u=1}^{j-1}\Lambda_u$.
    
    Now we show the pairwise orthogonality of the terms
    \begin{align*}
        \det\Bigg( I_m -\left( I_m-\sum_{u=1}^j\Lambda_u\right)^{-\frac{1}{2}}\Lambda_j &\left( I_m-\sum_{u=1}^j\Lambda_u\right)^{-\frac{1}{2}}\Bigg)^{\sum_{\ell =j+1}^k\tau_{\ell ,1}} \\
        &P^{(\kappa_j ,a_j,2)}_{\tau_{j}}\Bigg( \left( I_m-\sum_{u=1}^j\Lambda_u\right)^{-\frac{1}{2}}\Lambda_j\left( I_m-\sum_{u=1}^j\Lambda_u\right)^{-\frac{1}{2}}\Bigg)
    \end{align*}
    for $j\in\mathbb{N}$ and $\tau_1,\dots ,\tau_k$ a sequence of partitions of length $m$.
    Let $\tau_1,\dots ,\tau_k ,\tau'_1,\dots ,\tau'_k\in\mathbb{N}^{j\times m}$ be any two such sequences of partitions which are different from each other.
    Without loss of generality we can assume that there exists $1\leq j\leq k$ such that $\tau_{\ell ,1} =\tau_{\ell ,1}'$ for all $j +1\leq\ell\leq k$ and such that $\tau_{j ,1}<\tau_{j,1}'$.
    In particular, $a_j=a_j'$.
    Multiplying the product of the corresponding polynomials by the matrix variate Beta distribution $\Xi_j$, given in \eqref{eq:neutral-to-the-right}, gives
    \begin{equation*}
        P_{\tau_j}^{(\kappa_j, a_j,2)}(\Xi_j)P_{\tau'_j}^{(\kappa_j, a_j,2)}(\Xi_j)\det (\Xi_j)^{\kappa_j -\frac{m}{2}}\det(I_m-\Xi_j)^{\sum_{\ell =j+1}^{k+1}\kappa_{\ell} -\frac{m}{2} +2\sum_{\ell =j+1}^{k}\tau_{\ell ,q}} .
    \end{equation*}
    This is zero since $\tau_j\neq\tau_j'$ and we are integrating with respect to the weight measure $B_m^{(\kappa_j ,a_j )}$.
    This proves orthogonality, we can now apply proposition \ref{prop:naive-Koornwinders-method} inductively on $k$ to conclude that the set $(P_{\tau}^{(\kappa )})_{\tau\in\mathcal{P}^k}$ is pairwise orthogonal.

    Let $Q(\Lambda_1,\dots ,\Lambda_k)$ be a polynomial invariant under simultaneous conjugation by unitary matrices of absolute degree in the coefficients of $\Lambda_{k}$ strictly less than $\tau_{k}$.
    Note that the function
    \begin{equation*}
        \widetilde{Q}(\Xi_{k}) :=\int Q(\Lambda_1,\dots ,\Lambda_k)\prod_{j=1}^{k-1}P^{(\kappa_j,a_j,2)}_{\tau_j}(\Xi_j) W_m^{(\kappa_j ,a_j-\sum_{\ell =j+1}^{k}\tau_{\ell ,1})}(d\Xi_j) ,
    \end{equation*}
    where we see $\Lambda_j$ as a function of $(\Xi_1,\dots ,\Xi_j)$, which is linear in $\Xi_j$, lies in $\mathbf{P}(\mathrm{Her}(m))^{\mathbf{U}(m)}$ and has absolute degree strictly less than $\tau_{k}$.
    Now, by Fubini's theorem,
    \begin{multline*}
        \int_{\mathrm{Her}(m)^k} Q(\Lambda_1,\dots ,\Lambda_k)P_{\tau_1,\dots ,\tau_k}^{(\kappa)}(\Lambda_1,\dots ,\Lambda_k)W_{m}^{(\kappa )}(d\Lambda_1,\dots ,d\Lambda_k) \\
        =\int\widetilde{Q}(\Xi_k )P_{\tau_k}^{(\kappa_k ,a_k,2)}(\Xi_j)W_{m}^{(\kappa_k ,a_k)}(d\Xi_k) =0,
    \end{multline*}
    where we used that $a_k=\kappa_{k+1}$.
\end{proof}

\begin{remark}
    It is more difficult to see for which absolute degrees the product Hermitian Jacobi polynomials on the simplex are orthogonal to invariant polynomials in the other variables, since the change of variables \eqref{eq:neutral-to-the-right} depends non-polynomial on the other variables (for $k>2$) and the expression for $a_j$ involves the partitions $\tau_{j+1},\dots ,\tau_{k}$.
\end{remark}

It is not entirely trivial to determine the absolute degree of the product Hermitian Jacobi polynomials on the simplex and if they are triangular with respect to the multivariate Schur polynomials.

\begin{conjecture}
    The product Hermitian Jacobi polynomials on the simplex $(P_{\tau_1,\dots ,\tau_k}^{(\kappa )})_{\tau_1,\dots ,\tau_k\in\mathcal{P}}$ are eigenfunctions of the Hermitian Jacobi operator on the simplex $\mathcal{G}_{\kappa_1,\dots ,\kappa_{k+1}}^m$.
\end{conjecture}

\subsection{Eigenfunctions}
In this section, we conjecture that the operator $\mathcal{G}_{\kappa_1,\dots ,\kappa_{k+1}}^m$ is strongly triangular and introduce the Hermitian Jacobi polynomials on the simplex.

\begin{conjecture}
    The operator $\mathcal{G}_{\kappa_1,\dots ,\kappa_{k+1}}^m$ is strongly triangular with respect to the multivariate Schur functions.
\end{conjecture}

We have the following partial result in this direction.

\begin{proposition}
    There is some partial order $\preceq$ on $\Theta_k$ extending the natural ordering such that $\mathcal{G}_{\kappa}^m$ is triangular with respect to the right choice of multivariate Schur polynomials ordered by $\preceq$.
    Moreover the partial order $\preceq$ agrees with the natural ordering for non-equivalent indices.
\end{proposition}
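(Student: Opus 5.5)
My plan is to separate $\mathcal{G}_{\kappa}^m$ into pieces according to how they change the multidegree, and to treat each piece separately. I expand the coefficients of the second and first order terms of Definition \ref{def:matrix-Jacobi-process-simplex}. Every term has one of three forms:
\begin{enumerate}[label=(\roman*)]
\item a term lowering the total degree in some $\Lambda_j$ by one: the $\delta_{\alpha\beta}$ drift and the $(I_m)_{\alpha\delta}\Lambda_{j,\gamma\beta}$ part of the diagonal second order term;
\item a term preserving every partial degree $(r_1,\dots,r_k)$: the $-\Lambda_{j,\alpha\delta}\Lambda_{j,\gamma\beta}$ part, the Euler-type drift $-(|\kappa|+(k+1)m/2)\Lambda_{j,\alpha\beta}\partial_{j,\alpha\beta}$, and the mixed terms with $j\neq \ell$.
\end{enumerate}
The mixed terms $\Lambda_{j,\alpha\delta}\Lambda_{\ell,\gamma\beta}\partial_{j,\alpha\beta}\partial_{\ell,\gamma\delta}$ are homogeneous of degree $(0,\dots,0)$ in each variable separately, since each factor $\Lambda_j$ compensates one derivative in $\Lambda_j$.

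Terms of type (i) lower $(|\tau_1|,\dots,|\tau_k|)$ in the lexicographic multigrading. I will choose the ordering so that any strict decrease of the multigrade is smaller, possibly after reordering the variables so that the lexicographic convention matches the definition of $<$ on $\Theta_k$. The image of such a term is then a combination of multivariate Schur polynomials strictly below $(\tau_1,\dots,\tau_k;\phi)$, because the multivariate Schur polynomials are a graded basis.

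The degree-preserving part $\mathcal{D}_0$ is the main object. First I will show that $\mathcal{D}_0$ is an intertwiner for the simultaneous congruence action $\Lambda_j\mapsto g\Lambda_j g^*$ of $Gl_m(\mathbb{C})$ on $\mathbf{P}_{r_1,\dots,r_k}(\mathrm{Her}(m)^k)$. This is the same chain-rule computation as in the proof of Proposition \ref{prop:identity-on-irreducible-components}, now including the cross terms. The indices contract as $g\Lambda_j g^*$ with $\partial$'s transforming by $g^{-1},(g^*)^{-1}$, so the index pattern $\alpha\delta,\gamma\beta$ against $\alpha\beta,\gamma\delta$ is preserved.

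Second, I will split $\mathcal{D}_0$ into its diagonal part, a sum over $j$, and its off-diagonal part.

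\textbf{Diagonal part.} By Proposition \ref{prop:identity-on-irreducible-components} it acts on $V_{\tau_1}\otimes\overline V_{\tau_1}\otimes\cdots\otimes V_{\tau_k}\otimes\overline V_{\tau_k}$ by a scalar depending only on $(\tau_1,\dots,\tau_k)$, up to adjusting the Euler coefficient.

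\textbf{Off-diagonal part.} This part does not preserve the summands $V_{\tau_1}\otimes\cdots$. By Schur's lemma applied to the full $Gl_m(\mathbb{C})^{k}$-decomposition, it can only move between components of the same $Gl_m(\mathbb{C})$-isotypic type. Restricted to the $\mathbf{U}(m)$-invariants, it maps $\mathcal{S}^{\tau_1,\dots,\tau_k}_{\phi}$ into $\bigoplus_{\rho}\mathcal{S}^{\rho_1,\dots,\rho_k}_{\phi}$ with $|\rho_j|=|\tau_j|$, since it is diagonal-equivariant under the single $Gl_m(\mathbb{C})$ acting on the $\phi$-label.

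Third, I handle the $\rho\neq\tau$ contributions. I will try to show, via a highest-weight argument in each tensor factor, that they only occur with $\rho_j<\tau_j$ in the natural order for the relevant lexicographic index. Concretely, the operator $\Lambda_{j}\Lambda_\ell\partial_j\partial_\ell$ moves a box from factor $\ell$ to factor $j$ in a polarisation sense. If this cannot be pinned down, I will instead \emph{define} $\preceq$ by declaring these reached indices smaller. I would take the transitive closure of the relation generated by the nonzero matrix entries of $\mathcal{D}_0$ between distinct isotypic blocks, together with the natural order. I then need to check that this relation is acyclic, which should follow from a monotone quantity such as the eigenvalue of the diagonal part strictly changing.

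Finally, within a fixed $(\tau_1,\dots,\tau_k)$ and fixed isotypic label $\phi$, $\mathcal{D}_0$ restricts to an endomorphism of the space $\mathcal{S}^{\tau_1,\dots,\tau_k}_\phi$, which has dimension $(c^\phi)^2$. Here the \lq\lq right choice\rq\rq\ of multivariate Schur polynomials enters: I choose a basis of $\mathcal{S}_\phi$ that triangularises (Jordan or Schur form) this endomorphism, compatibly with the normalisation \eqref{eq:integral-property-multivariate-Schur}. I then order the equivalent indices $\phi'\equiv\phi$ along this triangular form. This refines the natural ordering only among equivalent indices, so $\preceq$ agrees with the natural ordering for non-equivalent indices.

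I expect the main obstacle to be the third step, namely controlling the cross terms between different $(\rho_1,\dots,\rho_k)$ of equal multidegree and proving that the generated relation has no cycles. A secondary difficulty is that the triangularising basis may conflict with the normalisation \eqref{eq:integral-property-multivariate-Schur}.
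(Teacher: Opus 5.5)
Your plan has a genuine gap, and it sits at the step you flag as the main obstacle. It comes from a false claim: the mixed terms \emph{do} preserve every summand $V_{\tau_1}\otimes\overline{V}_{\tau_1}\otimes\cdots\otimes V_{\tau_k}\otimes\overline{V}_{\tau_k}$.

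You are right that equivariance under the simultaneous congruence alone cannot show this. For example, $\phi=(2,1)$ occurs in both $((2),(1))$ and $((1,1),(1))$. However, the mixed term has more symmetry than that. Write $R^{(j)}_{\delta\beta}=\sum_{\alpha}\Lambda_{j,\alpha\delta}\partial_{j,\alpha\beta}$ and $L^{(j)}_{\alpha\gamma}=\sum_{\beta}\Lambda_{j,\gamma\beta}\partial_{j,\alpha\beta}$ for the infinitesimal right and left multiplications on $\Lambda_j$. Then for $j\neq\ell$,
\begin{equation*}
\sum_{\alpha,\beta,\gamma,\delta=1}^m(\Lambda_{j,\alpha\delta}\Lambda_{\ell,\gamma\beta}+\Lambda_{j,\gamma\beta}\Lambda_{\ell,\alpha\delta})\partial_{j,\alpha\beta}\partial_{\ell,\gamma\delta}=\sum_{\beta,\delta=1}^m R^{(j)}_{\delta\beta}R^{(\ell)}_{\beta\delta}+\sum_{\alpha,\gamma=1}^m L^{(j)}_{\alpha\gamma}L^{(\ell)}_{\gamma\alpha}.
\end{equation*}
The first sum commutes with left multiplication in each variable separately, and the second with right multiplication in each variable separately.

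Complexify $\mathbf{P}(\mathrm{Her}(m))$ to polynomials on $\mathbb{C}^{m\times m}$. These decompose multiplicity-freely under left-times-right multiplication by $Gl_m(\mathbb{C})\times Gl_m(\mathbb{C})$, so the left type (equally, the right type) of each factor determines the summand. Hence both sums preserve each $(\tau_1,\dots,\tau_k)$-summand. Both also commute with the diagonal congruence, so they preserve $\mathcal{S}^{\tau_1,\dots,\tau_k}_\phi$.

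As a sanity check, for $m=k=2$ the mixed term of $\mathcal{G}^m_\kappa$ sends $\det(\Lambda_1)\mathrm{Tr}(\Lambda_2)$ to $-2\det(\Lambda_1)\mathrm{Tr}(\Lambda_2)$. Your ``box-moving'' picture is therefore wrong: there are no $\rho\neq\tau$ contributions at all.

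This preservation of $\mathcal{V}^{\tau_1,\dots,\tau_k}_\phi$, and hence of $\mathcal{S}^{\tau_1,\dots,\tau_k}_\phi$, is exactly what the paper's proof asserts. The paper cites only Schur's lemma for the congruence action, and the factorisation above supplies the missing separate equivariance. After that, only your final Jordan-form step is needed, and that step matches the paper.

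Without this fact, your fallback does not prove the statement. Declaring reached indices smaller via a transitive closure would make non-equivalent indices comparable even when they are incomparable in the natural order. That contradicts the second sentence of the proposition, and the acyclicity is also left unproved.

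The same issue affects your degree-lowering terms. ``Any strict decrease of the multigrade is smaller'' is not implied by the natural order on $\Theta_k$, which requires $\rho_\ell=\tau_\ell$ for $\ell>j$. Enlarging the order this way again breaks the ``moreover'' clause. What you need instead is that each such term acts on a single $\Lambda_j$. It therefore fixes $\tau_\ell$ for $\ell\neq j$ and lowers $|\tau_j|$, which already lies below in the natural order.

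Your secondary worry about the normalisation \eqref{eq:integral-property-multivariate-Schur} is legitimate. The paper simply takes a rescaled Jordan basis and does not address it.
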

\begin{proof}
    The operator $\mathcal{G}_{\kappa}^m$ is given by
    \begin{equation*}
        \mathcal{G}_{\kappa}^m =\bigotimes_{j=1}^k\mathcal{L}_{\kappa_j ,|\kappa |-\kappa_j} +\sum_{j=1}^kc_j\mathcal{F}_j
        -\frac{1}{2}\sum_{1\leq j\neq\ell\leq k}(\Lambda_{j,\alpha\delta}\Lambda_{\ell,\gamma\beta} +\Lambda_{j,\gamma\beta}\Lambda_{\ell ,\alpha\delta})\frac{\partial^2}{\partial\Lambda_{j,\alpha\beta}\partial\Lambda_{j,\gamma\delta}} ,
    \end{equation*}
    where $\mathcal{F}_j$ is some first order term acting on $\Lambda_j$ and which has the Euler vector field as the total degree preserving part. 
    
    The first two terms are strongly triangular with respect to the multivariate Schur polynomials as shown in \ref{thm:triangularity-multivariate-Hermitian-operator} and the proof of \ref{prop:identity-on-irreducible-components}.
    The last part $\mathcal{R}$ is a $Gl_m(\mathbb{C})$-intertwiner on $\mathbf{P}_{r_1,\dots ,r_k}(\mathrm{Her}(m)^k)$ with respect to the action induced by the simultaneous congruence transform.
    By Schur's lemma, $\mathcal{R}$ takes the isotypic components
    \begin{equation*}
        \mathcal{V}_{\phi}^{\tau_1,\dots ,\tau_k} :=\bigoplus_{\phi'\equiv\phi} V_{\phi'}^{\tau_1 ,\dots ,\tau_k}
    \end{equation*}
    into itself.
    It therefore also takes the subspaces $\mathcal{S}_{\phi}^{\tau_1,\dots ,\tau_k}$ into themselves.
    Since this space is finite dimensional, we can choose an ordered basis of $\mathcal{S}_{\phi}^{\tau_1,\dots ,\tau_k}$ such that $\mathcal{R}$ is in a Jordan normal form with the $1$'s below the diagonal.
    We now choose the multivariate Schur functions corresponding to $\phi'\equiv\phi$ equal to (a rescaling of) this basis.
    We now define the ordering $\preceq$ by $(\rho_1 ,\dots ,\rho_k ;\psi )\preceq (\tau_1,\dots ,\tau_k ;\phi )$ if and only if $(\rho_1 ,\dots ,\rho_k ;\psi )\leq (\tau_1,\dots ,\tau_k ;\phi )$ with respect to the normal order or $(\rho_1 ,\dots ,\rho_k ;\psi )$ and $(\tau_1,\dots ,\tau_k ;\phi )$ are equivalent and $s_{\psi}^{\tau_1,\dots ,\tau_k}$ comes before $s_{\phi}^{\tau_1,\dots ,\tau_k}$ as the (rescaled) ordered basis elements of $\mathcal{S}_{\phi}^{\tau_1,\dots ,\tau_k}$ from before.
\end{proof}

\begin{definition}\label{def:Hermitian-Jacobi-polynomials-simplex}
    The \emph{Hermitian Jacobi polynomials on the simplex} $(P^{(\kappa_1 ,\dots ,\kappa_{k+1})}_{\tau_1,\dots ,\tau_{k+1};\phi})_{(\tau_1,\dots ,\tau_k ;\phi )\in\Theta_k}$ are defined by
    \begin{equation*}
        P^{(\kappa_1 ,\dots ,\kappa_{k+1})}_{\tau_1,\dots ,\tau_k ;\phi} =\sum_{(\rho_1,\dots ,\rho_k ;\psi)\leq (\tau_1,\dots ,\tau_k;\phi)} c^{\tau_1,\dots ,\tau_k;\phi}_{\rho_1,\dots ,\rho_k ;\psi} s_{\psi}^{\rho_1,\dots ,\rho_k}\textrm{ with }c^{\tau_1,\dots ,\tau_k;\phi}_{\tau_1,\dots ,\tau_k ;\phi} =1
    \end{equation*}
    and
    \begin{equation*}
        \int P^{(\kappa_1 ,\dots ,\kappa_{k+1})}_{\tau_1,\dots ,\tau_k ;\phi}s_{\psi}^{\rho_1,\dots ,\rho_k}dW_{m}^{(\kappa_1,\dots ,\kappa_{k+1})} =0\textrm{ for all }(\rho_1,\dots ,\rho_k ;\psi)< (\tau_1,\dots ,\tau_k;\phi) .
    \end{equation*}
\end{definition}

\begin{remark}
    Let $m\geq 1$, $k=2$ and $\kappa\in\mathbb{R}^3$ be such that $\kappa_j >m/2-1$ for $j=1,2,3$.
    The functions
    \begin{equation}\label{eq:general-Hermitian-Jacobi-polynomials-simplex}
        \det\left( I_m-\Lambda_{1}\right)^{r}
        \tilde{P}_{\tau_1,\tau_2;\phi }^{(\kappa ,a)} \left(\Lambda_1 ,(I_m-\Lambda_1)^{-\frac{1}{2}}\Lambda_2(I_m-\Lambda_1)^{-\frac{1}{2}} \right) ,
    \end{equation}
    are polynomials for $r\geq |\tau_2|$ (and probably also for $r\geq\tau_{2,1}$) see remark \ref{rmk:k=2-polynomials}, where $(\tilde{P}^{(\kappa ,a)}_{\tau_1,\tau_{2};\phi })_{(\tau_1 ,\tau_{2};\phi )\in\Theta_{k}}$ are multivariate Hermitian Jacobi polynomials of index $(\kappa ,a)$ with
    \begin{equation*}
        a_j :=2r\delta_{1j} +\sum_{\ell =j+1}^{3}\kappa_{\ell}\textrm{ for } 1\leq j\leq 2 .
    \end{equation*}
    Koornwinders method suggest that there is some relation between the polynomials \eqref{eq:general-Hermitian-Jacobi-polynomials-simplex} and the Hermitian Jacobi polynomials on the simplex.
    Note that similar expressions do not give polynomials for $k\geq 3$.
\end{remark}

\bibliographystyle{plain}
\bibliography{reference.bib}

\end{document}